\numberwithin{equation}{section}
\newtheorem{thm}{Theorem}[section]
\newtheorem{cor}[thm]{Corollary}
\newtheorem{lem}[thm]{Lemma}
\newtheorem{prop}[thm]{Proposition}
\newtheorem{rem}[thm]{Remark}
\theoremstyle{definition}
\newtheorem{defn}[thm]{Definition}
\theoremstyle{remark}
\DeclareMathOperator{\op}{op}
\DeclareMathOperator{\re}{Re}
\DeclareMathOperator{\rang}{range}
\DeclareMathOperator{\supp}{supp}
\DeclareMathOperator{\image}{im}
\DeclareMathOperator{\ind}{ind-lim}
\DeclareMathOperator{\proj}{proj-lim}
\def\dbar{{\,\mathchar'26\mkern-12mu d}}
\def\skp#1{\langle#1\rangle}
\title[ $H^\infty$-calculus for a Degenerate  Boundary Value Problem]{Bounded $H^\infty$-calculus for a Degenerate Elliptic Boundary Value Problem}
\author{Thorben Krietenstein}
\author{Elmar Schrohe}
\address{Institut für Analysis, Leibniz Universität Hannover, Welfengarten 1, 30167 Hannover, Germany}
\email{krietenstein@math.uni-hannover.de, schrohe@math.uni-hannover.de}
\begin{document}

\maketitle
\begin{abstract}
On a manifold $X$ with boundary and bounded geometry we consider a strongly elliptic second order operator $A$ together with a 
degenerate boundary operator $T$ of the form 
$T=\varphi_0\gamma_0 + \varphi_1\gamma_1$. 
Here $\gamma_0$ and $\gamma_1$ denote the evaluation of a function and its exterior normal derivative, respectively, at the boundary. We assume that $\varphi_0,\varphi_1\in C^{\infty}_b(\partial X)$, $\varphi_0, \varphi_1\ge 0$, and $\varphi_0+\varphi_1\geq c$, for some $c>0$. We also assume that the highest order coefficients of $A$ belong to $C^\tau(X)$ for some $\tau>0$ and the lower order coefficients are in $L_\infty(X)$.
We show that the $L_p(X)$-realization of $A$ with respect to the boundary operator  $T$ has a bounded $H^\infty$-calculus.
\end{abstract} 

\tableofcontents

\section{Introduction}
Maximal regularity has become an indispensable tool in the analysis 
of evolution equations as it can be used to establish in an uncomplicated 
way the existence 
of short time solutions to a large class of quasilinear parabolic problems. 
Maximal regularity in turn is implied by the existence of a bounded 
$H^\infty$-calculus, a concept introduced by McIntosh in 1986, \cite{McIntosh1986},
of angle $<\pi/2$.
Many elliptic operators are known to have a bounded $H^\infty$-calculus, see
e.g. Amann, Hieber, Simonett \cite{Amann1994} for the case of 
differential operators. Already in 1971 Seeley \cite{Seeley1971} had shown that
differential boundary value problems have bounded imaginary powers, 
a property which is very close to that of having a bounded $H^\infty$-calculus and
can often be shown by the same methods. 
Ellipticity, however, is not necessary in this context as shown in \cite{Bilyj10}; 
a hypoellipticity condition in the spirit of H\"ormander's conditions (4.2)' and (4.4)' in 
\cite{Hoermander67} is  sufficient.  
In the present article, we establish the existence of a bounded $H^\infty$-calculus
for a degenerate elliptic boundary value problem. 
We consider a strongly elliptic operator $A$, endowed with a boundary 
operator that, in general, will not satisfy the Lopatinsky-Shapiro ellipticity condition. 
The key point of our analysis then is the construction of a parameter-dependent 
parametrix to the resolvent with the help of  Boutet de Monvel's calculus 
for boundary value problems \cite{BoutetMonvel1971}. 
As a consequence of the non-ellipticity, however, this parametrix will only 
belong to an extended version of Boutet de Monvel's calculus that we sketch,
below. Still, this will enable us to deduce the necessary estimates for the 
existence of the bounded $H^\infty$-calculus.

Here are the details. 
Let $X$ be a manifold with boundary 
$\partial X$ and bounded geometry. Let $A$ be a strongly elliptic second order partial differential operator
on $X$ which in local coordinates can be written in the form  
\begin{align}\label{eq:A}
A=\sum_{1\leq k,l\leq n}a^{kl}(x) D_k D_l+\sum_{1\leq k\leq n}b^k(x)D_k+c^0(x),
\end{align}
where  $a^{kl}\in C^\tau({X})$ are real-valued,  
the matrix $(a^{kl}(x))_{1\leq k,l\leq n}$ is positive definite with a uniform positive lower bound, $b^k,c^0\in L_\infty(X)$, and $D_k = -i\partial_{x_k}$. 
Furthermore, we say that $A$ is $M$-elliptic, if all the norms of the coefficients are bounded by $M>0$ and the positive lower bound of the matrix is given by $1/M$. 
Obviously, this is no restriction as every operator as above is $M$-elliptic for some $M$.  
The operator $A$ is endowed
with a boundary operator $T$ of the form 
\begin{align}\label{eq:T}
T=\varphi_0\gamma_0+\varphi_1 \gamma_1.
\end{align}
Here $\gamma_0$ denotes the trace operator and $\gamma_1$ the exterior normal 
derivative at $\partial X$. 
Moreover, $\varphi_0, \varphi_1\in C_b^\infty(\partial X)$ 
are real-valued functions on the boundary with  $\varphi_1\geq0$ 
and $\varphi_0 + \varphi_1\geq c>0$.
We obtain the classical Dirichlet problem for $\varphi_0=1, \varphi_1=0$. 
The choice $\varphi_0=0, \varphi_1=1$ yields Neumann boundary conditions, 
and Robin problems correspond to the case where $\varphi_1$ is nowhere zero.

For given functions $f$ and $\phi$ we consider the boundary value problem with spectral parameter $\lambda$
\begin{align*}
(A-\lambda)u=f \text{ in } X, \quad 
Tu=\phi \text{ on }	\partial X,
\end{align*}
in $L_p(X)$,   $1<p<\infty$. 
To this end we introduce the  $L_p$-realization of the above boundary value problem, 
i.e. the unbounded operator $A_T$, acting like $A$ on the domain 
\begin{align*}
\mathcal{D}(A_T):=\{u\in L_p(X):Au\in L_p(X),\;Tu=0 \text{ on } \partial X\}.
\end{align*}
This problem has been investigated by many authors, see e.g.  
Egorov-Kondrat'ev \cite{EgorovKondratev69},  Kannai \cite{Kannai76} or Taira 
\cite{Taira76}, \cite{TairaCUP}, \cite{Taira16}, 
also for the case where the boundary 
operator $T$ involves an additional first order 
tangential differential operator. 
This makes the analysis more subtle and will be treated in a subsequent publication.
\medskip

We recall the notion of sectoriality: 
\begin{defn}
A closed and densely defined operator $B:\mathcal{D}(B)\in E\rightarrow E$, 
acting in a Banach space $E$ that is injective with dense range is called 
sectorial of type $\omega<\pi$, if for every $\omega<\theta<\pi$ 
there exists a constant $C_\theta$, such that
\begin{align*}
\sigma(B)\subset\Sigma_\theta \;\text{and}\; \|\lambda(B-\lambda)^{-1}\|_{\mathcal{L}(X)}\leq C_\theta \;\text {for all}\; \lambda\in \mathbb{C}\backslash\Sigma_\theta. 
\end{align*}
Here $\Sigma_\theta=\{\lambda\in\mathbb{C}\backslash\{0\}:|\arg(\lambda)|\leq \theta\}\cup \{0\}$ is the sector of angle $\theta$ around the positive real axis.
\end{defn}
It has been shown by Taira that, for a bounded domain $X$, 
the $L_p$-realization $A_T$ is sectorial of type $\varepsilon$ for every $\varepsilon>0$, possibly after replacing $A$ by $A+c$ for a positive constant $c$. 
In particular, it generates an  analytic semigroup. 
For details see e.g.  \cite[Theorem 1.2]{TairaCUP}.

\subsection*{Bounded $H^\infty$ calculus}

By $H^\infty(\Sigma_\theta)$ we denote the space of 
bounded holomorphic functions in the interior of the sector $\Sigma_\theta$ 
and by $H_*^\infty(\Sigma_\theta)$ the subspace of all  functions $f$  
such that  $|f(\lambda)|\leq C(|\lambda|^\epsilon+|\lambda|^{-\varepsilon})^{-1}$ 
for suitable $C,\varepsilon>0$. 
It is well-known that this is a dense subspace with respect to the topology of uniform convergence on compact sets.

For a sectorial operator $B$ of type $\omega$, $\theta'\in\  ]\omega,\theta[$ and $f\in H_*^\infty(\Lambda_\theta)$
let
\begin{align*}
f(B)=\frac i{2\pi}\int _{\partial \Lambda_{\theta'}} f(\lambda)(B-\lambda)^{-1}\,d\lambda\in \mathcal{L}(E) .
\end{align*}
The integral exists due to the sectoriality  and is independent of the choice of $\theta'$ by Cauchy's integral theorem. Given $f\in H^\infty(\Sigma_\theta)$, we can approximate $f$ by a sequence  $(f_n)\subset H_*^\infty(\Sigma_\theta)$ and define 
\begin{align*}
f(B)x:=\lim f_n(B)x \; \text {for}\; x\in \mathcal{D}(B)\cap \rang(B). 
\end{align*}
It can be shown that $\mathcal{D}(B)\cap \rang(B)$ is dense in $E$ and that the above equation defines a closable operator. The closure is again denoted by $f(B)$.

\begin{defn}
We say that a sectorial operator $B$ of type $\omega$ admits a bounded $H^\infty$ calculus of angle $\omega$, if for any $\omega<\theta<\pi$ there exists a constant $C_\theta>0$, such that 
\begin{align}
\label{eq:Hinfty_estimate}
\|f(B)\|_{\mathcal{L}(E)}\leq C_\theta\|f\|_\infty , \quad f\in H^\infty(\Sigma_{\theta}).
\end{align}
\end{defn}

According to the  principle of uniform boundedness it is sufficient to verify estimate \eqref{eq:Hinfty_estimate} for all $f\in H_*^\infty(\Sigma_{\theta})$.

\subsection*{Main results}
\begin{thm}\label{thm_hinfty}
\label{thm:main_result} Let $(X,g)$ be a manifold with boundary and bounded geometry. Let $T$ be as in \eqref{eq:T} and $A_T$ be the realization given above of an $M$-elliptic sufficiently regular second order differential operator. Then, for every $0<\vartheta<\pi$ a constant $\nu=\nu(M,|t|_*,\vartheta)\geq0$ exists such that $A_T+\nu$ allows an $H^\infty(\Sigma_\vartheta)$-calculus in $L_p(X)$. Moreover, a constant $C=C(M,|t|_*,\vartheta)>0$ exists such that for all $f\in H^\infty(\Sigma_\vartheta)$ the following estimate holds:
\begin{align*}
\|f(A_T)\|_{\mathcal{B}(L_p(X))}\leq C\|f\|_{\infty}.
\end{align*}
\end{thm}

As a corollary, we obtain unique solvability for the full boundary value problem. For this we need some notation. As before, $(X,g)$ is 
a manifold with boundary and bounded geometry, $1<p<\infty$.

We denote by  $B^{s}_p(\partial X):= B^s_{p,p}(\partial X)$ the $L_p$-Besov space of order $s\in \mathbb R$ on $\partial X$ as defined in \cite{Grosse2013}. 
According to \cite[Theorem 4.10]{Grosse2013}, $B^{s-1/p}_p(\partial X)$, $1<p<\infty$, $s>1/p$, coincides with the space of all restrictions to $\partial X$ of  functions in $H^s_p(X)$.
The theorem, below, can be shown by modifying the proof of \cite[Theorem 4.10]{Grosse2013} in the spirit of the proof of \cite[Theorem 2.9.2]{Triebel1978}. 

\begin{thm}\label{thm_ext}Let $s>1+1/p$. Then, given $v_0\in
B^{s-1/p}_p(\partial X)$ and $v_1\in B^{s-1-1/p}_p (\partial X)$ there exists 
$u\in H^s_p(X)$ such that $\gamma_0u=v_0$ and $\partial_\nu u = v_1$.
\end{thm} 

\begin{defn} 
For $s\in \mathbb R$ and  the boundary condition $T$ in \eqref{eq:T}, we define  
$$B^{s-1-1/p}_{p,T}(\partial X)=\{v= \varphi_0 v_0+\varphi_1v_1\mid v_0\in B^{s-1/p}_p(\partial X), v_1\in B^{s-1-1/p}_p(\partial X) \} .
$$
\end{defn} 
Clearly, this is a Banach space with the topology of the non-direct sum.  

\begin{prop}
For $s>1+1/p$ the mapping $T: H^s_p(X) \to B^{s-1-1/p}_{p,T}(\partial X)$ is surjective. 
\end{prop}
In fact, given $v= \varphi_0 v_0+\varphi_1v_1$ in $B^{s-1-1/p}_{p,T}(\partial X)$, Theorem \ref{thm_ext} implies that we find $u_0$ and $u_1$ in $H^s_p(X)$ such that  
$\gamma_0u_0 = v_0$, $\gamma_1u_0=0$, $\gamma_0u_1=0$ and $\gamma_1u_1=v_1$.
Then $u_0+u_1$ is a preimage of $v$ under $T$. 
\begin{thm}\label{thm_iso}For very $0<\vartheta<\pi$ the operator 
\begin{eqnarray}\label{eq_iso}
\binom{A-\lambda}{T}: H^2_p(X)\longrightarrow \begin{array}{c} L_p(X) \\\oplus\\ B_{p,T}^{1-1/p}(\partial X)\end{array} 
\end{eqnarray}
is a topological isomorphism for $\lambda \in \Sigma_\theta$, $|\lambda|$ sufficiently large.    
\end{thm}
This is immediate from Theorem \ref{thm_hinfty} and the surjectivity of $T$: Given 
$f\in L_p(X)$ and $v \in B^{1-1/p}_{p,T}(\partial X)$, we first fix $w_0\in H^2_p(X)$ with $Tw_0=v$. By Theorem \ref{thm_hinfty},  the problem 
$(A-\lambda)w=f-(A-\lambda)w_0$, $Tu=0$ has a unique solution $w\in H^2_p(X)$.
Then $u=w+w_0$ is the (unique) solution to $(A-\lambda)u=f$, $T u = v$.   
Hence \eqref{eq_iso} is a bijection. As it is continuous, it is a topological isomorphism in view of the closed graph theorem.
\medskip

Finally, we apply our results to the porous medium equation with time-independent boundary condition $T$ and strictly positive initial value. Details can be found in Section 
\ref{sec:porus_medium_equation}. We obtain: 

\begin{thm}\label{thm_PME} Let $1<p,q<\infty$, $n/p+2/q<1$, $m>0$, $v_0\in H^2_p(X)$ with $v_0\ge c>0$, and $\phi=Tv_0$. Then the porous medium equation
\begin{align}\label{eq:PME_1}
\begin{cases}
\dot{v}-\Delta_g v^m=0\\
Tv=\phi\\
v\vert_{t=0}=v_0
\end{cases}.
\end{align}
has a unique short time solution of maximal regularity, i.e. there exist $\tau>0$ and a unique solution 
\begin{align*}
v\in L_q(0,\tau;H^2_p(X)\cap\{Tv=Tv_0\})\cap W^1_q(0,\tau;L_p(X))
\end{align*}
of the porous medium equation \eqref{eq:PME_1}.
\end{thm}

\subsection*{Relation to previous work} In \cite{Abels2005}, Abels developed a (different) variant of Boutet de Monvel's calculus with non-smooth symbols in order to construct parametrices to 
elliptic operators with Hölder regularity. 

Theorem \ref{thm_hinfty} extends \cite[Theorem 1.2]{TairaCUP} in that (i) one can now treat manifolds of bounded geometry instead of bounded domains, (ii) the differentiability assumptions on the coefficients are reduced from $C^\infty$ to $C^\tau$, $\tau>0$, for the top order terms and $L^\infty$ for the lower order terms and  (iii) one obtains the existence of a bounded $H^\infty$-calculus rather than the existence of a holomorphic semigroup.  
Theorem \ref{thm_iso} extends \cite[Theorem 1.1]{TairaCUP} to the case of manifolds with boundary and bounded geometry and operators with non-smooth coefficients, with the restriction that $\lambda\in \Sigma_\vartheta$ has to  be large and we work on $H^2_p(X)$ as a consequence of the non-smoothness of the coefficients. 
 
In \cite{Taira2014} and \cite{Taira2020} Taira treats more general Waldenfels integro-differential operators to which the present methods should also be applicable.

While there is a wealth of literature on the porous medium equation, it seems to be new to study it  on manifolds of bounded geometry and with degenerate boundary condition.

\subsection*{Outline of the paper}
In order to establish \eqref{eq:Hinfty_estimate} for $A_T+c$ we have to show that  
for every fixed $0<\theta<\pi$
\begin{align}
\label{eq:ess_est}
\left\|\int_{\partial \Lambda_\theta} f(\lambda)(A_T+c-\lambda)^{-1}\,d\lambda\right\|_{\mathcal{L}(L_p(X))}\leq C_\theta \|f\|_\infty, \quad f\in H_*^\infty(\Lambda_\theta).
\end{align}

It is clear that a good understanding of $(A_T+c-\lambda)^{-1}$ on the rays $\arg\lambda = \pm \theta$, $0<\theta<\pi$  is essential for this task.

The main tool we use in this paper is Boutet de Monvel's calculus for boundary value 
problems \cite{BoutetMonvel1971}. Details can be found e.g.\ in the monographs by 
Rempel and Schulze \cite{Rempel1982} and Grubb \cite{Grubb1996} 
or in the short introduction \cite{Schrohe2001}.
We will also need a slight generalization for which  details will be given below. 
Recall that an operator of order $m\in \mathbb R$ and class (or type) $d \in \mathbb N_0$ in Boutet de Monvel's calculus on 
$\mathbb R^n_+$ is a matrix of operators
\begin{align*}
\begin{pmatrix}
P_++G& K\\
T & S
\end{pmatrix}
:
\begin{matrix}
\mathcal S(\mathbb R^n_+,E_0)\\\oplus\\\mathcal S(\mathbb R^{n-1} ,F_0)
\end{matrix}
\rightarrow
\begin{matrix}
\mathcal S(\mathbb R^n_+,E_1)\\\oplus\\\mathcal S( \mathbb R^{n-1},F_1)
\end{matrix}.
\end{align*}
Here $E_0$ and $E_1$ are vector bundles over $\mathbb R^n$, 
and $F_0$, $F_1$ are vector bundles over $\partial \mathbb R^n_+=\mathbb R^{n-1}$. 
Moreover, $P$ is a pseudodifferential operator on $\mathbb R^n$ satisfying the transmission condition,  and $P_+$ denotes its truncation to $\mathbb R^n_+$: 
$P_+ = r^+Pe^+$, where $e^+$ denotes extension by zero from $\mathcal S(\mathbb R^n_+, E_0)$ to, say, $L_2( \mathbb R^n,E_0)$, and $r^+$ denotes the restriction of distributions on $\mathbb R^n$ to those on $\mathbb R^n_+$.   
The operators $G$ and $T$ are singular Green and trace operators of order 
$m$ and class $d$, respectively; $K$ is a potential operator of order $m$.  
Finally $S$ is a pseudodifferential operator on the boundary $\mathbb R^{n-1}$ 
of order $m$.

Boutet de Monvel's calculus is closed under compositions provided  the vector bundles fit together. Via coordinate maps the calculus can be transferred to smooth manifolds with boundary. 

Boutet de Monvel's calculus  has a symbolic structure with a notion of ellipticity, and there exist parametrices to elliptic elements in the calculus. Moreover, the calculus contains its inverses whenever these exist. An operator of order $m$ and class $d$ as above extends to a bounded map
$$
H^{s+m}_p(X,E_0)\oplus B_p^{s+m-1/p}(\partial X,F_0)
\rightarrow
H^{s}_p(X,E_1)\\\oplus\\B_p^{s-1/p}(\partial X,F_1),
$$ 
provided $s>d-1+1/p$, where $H^s_p$ denotes the usual Sobolev space and $B^s_p=B^s_{p,p}$ the Besov
space of order $s$, see  Grubb \cite{Grubb1990}. 
\medskip

It is well-known that the operator 
$$\binom{(A-\lambda)_+}{\gamma_0}: 
H^2_p(X) \to 
\begin{matrix} L_p(X)\\\oplus\\ B^{2-1/p}_p(\partial X)\end{matrix}$$
is invertible for $\lambda \in \Lambda_\theta$,  $\theta>0$, $|\lambda|$ sufficiently large,
whenever $X$ is a compact manifold with boundary or $\mathbb R^n_+$.  
In particular, it is invertible for all $\lambda\in\Lambda_\theta$, if we replace  $A$ by 
$A+c$ for $c>0$ sufficiently large.  
In order to keep the notation simple, we will assume from now on that $A$ has been
replaced by $A+c$ for such $c$ and write $A$ instead of $A+c$.
 
Apart from the fact that $\gamma_0$ is formally not of the right order (which is of no importance here and can be easily be arranged),  the problem fits into Boutet de Monvel's 
calculus and one obtains the inverse in the form 
$$ 
 \binom{(A-\lambda)_+}{\gamma_0}^{-1} = (( A-\lambda)_+^{-1} + G^D_{\lambda} \quad K^D_{\lambda}).
$$
Here $(A-\lambda)^{-1}$ is the resolvent on a closed manifold with boundary into which $X$ embeds (in case $X$ is compact) or on $\mathbb R^n$ (if $X = \mathbb R_+$),  see \cite{Grubb1996}.

We will denote the corresponding truncation by $Q_{\lambda,+}$:  
$$Q_{\lambda,+} = ((A-\lambda)^{-1})_+ .$$ 
As a consequence,
\begin{eqnarray*}
\binom{(A-\lambda)_+}{T}  (Q_{\lambda,+} + G^D_{\lambda} \quad K^D_{\lambda})
= 
\begin{pmatrix} 
I&0\\
T (Q_{\lambda,+} + G^D_{\lambda} )& TK_\lambda^D
\end{pmatrix}
\end{eqnarray*} 
Assuming that $S_\lambda:=TK_\lambda^D$ is invertible with inverse $S^{-1}_\lambda$, we find that 
\begin{eqnarray}
\lefteqn{\binom{(A-\lambda)_+}{T}^{-1} }\nonumber\\
&=&(Q_{\lambda,+} + G^D_{\lambda} - K_\lambda^DS^{-1}_\lambda T(Q_{\lambda,+}+G^D_\lambda)\quad 
K_\lambda^D S^{-1}_\lambda)
\end{eqnarray}
For the realization $(A-\lambda)_T$ we obtain:
\begin{eqnarray*}\lefteqn{(A-\lambda)_T^{-1} }\nonumber\\
&=&Q_{\lambda,+} + G^D_{\lambda} - K_\lambda^DS^{-1}_\lambda T(Q_{\lambda,+}+G^D_\lambda)\\
&=&
Q_{\lambda,+} + G^D_{\lambda} +G_\lambda^T
\end{eqnarray*}
with 
\begin{align}
\label{eq:def_GT}
G_\lambda^T := - K_\lambda^DS^{-1}_\lambda T(Q_{\lambda,+}+G^D_\lambda).
\end{align}

\begin{lem}\label{lem:q+g}
For every choice of $\theta\in\ ]0,\pi[$, there exists a constant $C_\theta\ge0$ such that  
$$\left\|\int_{\partial \Lambda_\theta} f(\lambda)(Q_{\lambda,+}+G_\lambda^D)\,d\lambda\right\|_{\mathcal{L}(L_p(X))}\leq C_\theta \|f\|_\infty \;\text{for all}\; f\in H_*^\infty(\Lambda_\theta).$$
\end{lem}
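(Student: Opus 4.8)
The plan is to estimate the two summands separately: $Q_{\lambda,+}$ will be reduced to the corresponding functional-calculus estimate on the underlying closed manifold, while $G^D_\lambda$ — the proper boundary contribution — will be handled on the level of the parameter-dependent symbol-kernels of Boutet de Monvel's calculus. (In substance the assertion is nothing but the well-known fact that the Dirichlet realization $A_D$ of $A$, whose resolvent is precisely $Q_{\lambda,+}+G^D_\lambda$, admits a bounded $H^\infty$-calculus of angle $\varepsilon$; we nevertheless go through the argument because the estimate for $G^D_\lambda$ is the template for the one needed in the next section for the genuinely degenerate Green operator $G^T_\lambda$.)

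\emph{The term $Q_{\lambda,+}$.} Extend $A$ to a strongly elliptic second order operator with smooth coefficients on the double $\tilde\Omega$ (and keep $A$ on $\mathbb R^n$ in the half-space case). Then $Q_{\lambda,+}=((A-\lambda)^{-1})_+=r^+(A-\lambda)^{-1}e^+$, where $(A-\lambda)^{-1}$ now denotes the resolvent on $\tilde\Omega$ (resp.\ on $\mathbb R^n$); since $e^+\in\mathcal L(L_p(\Omega),L_p(\tilde\Omega))$ and $r^+\in\mathcal L(L_p(\tilde\Omega),L_p(\Omega))$ are bounded and independent of $\lambda$, they may be pulled out of the integral,
\[
\int_{\partial\Lambda_\theta} f(\lambda)\,Q_{\lambda,+}\,d\lambda
 = r^+\!\left(\int_{\partial\Lambda_\theta} f(\lambda)\,(A-\lambda)^{-1}\,d\lambda\right)\!e^+ .
\]
On $\tilde\Omega$ (resp.\ on $\mathbb R^n$) the operator $A$ is, after the shift that has been absorbed, invertible, sectorial of angle $\varepsilon$, and admits a bounded $H^\infty$-calculus of angle $\varepsilon$ on $L_p$ — the classical fact for strongly elliptic differential operators, see e.g.\ \cite{Seeley1971,AmannHieberSimonett94} — so the inner integral has $\mathcal L(L_p)$-norm at most $C_\theta\|f\|_\infty$, and hence so does the left-hand side.

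\emph{The term $G^D_\lambda$.} On a sector containing the contour, $G^D_\lambda$ is a parameter-dependent singular Green operator of order $-2$ and class $0$, holomorphic in $\lambda$, whose symbol-kernel obeys, in Grubb's parameter-dependent calculus, the estimates of an order $-2$, class $0$ Green symbol-kernel jointly in $(\xi',|\lambda|^{1/2})$. Split it into its $(\xi',|\lambda|^{1/2})$-homogeneous leading part $G^{D,0}_\lambda$ of degree $-2$ and a remainder whose symbol-kernel estimates carry an extra decaying factor $(|\xi'|+|\lambda|^{1/2})^{-1}$. After inserting $|f(\lambda)|\le\|f\|_\infty$, the remainder is integrated absolutely over the contour and contributes a Green symbol-kernel of order $\le-1$, class $0$, with constant $\le C_\theta$. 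For $G^{D,0}_\lambda$, note that at a boundary point $x'$ the truncated leading part of $(A-\lambda)^{-1}$ together with $G^{D,0}_\lambda$ is exactly the resolvent $(a_2(x',0,\xi',D_n)-\lambda)^{-1}$ on $L_2(\mathbb R_+)$ under the Dirichlet condition at $0$, where $a_2(x,\xi)=\sum_{k,l}a^{kl}(x)\xi_k\xi_l$ and $a_2(x',0,\xi',D_n)$ is the corresponding $x_n$-differential operator. Hence, by Cauchy's integral formula, $\int_{\partial\Lambda_\theta}f(\lambda)G^{D,0}_\lambda\,d\lambda$ is (up to a constant) the singular-Green part of $f$ applied to this model Dirichlet operator. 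That operator is positive and self-adjoint on $L_2(\mathbb R_+)$, so has a bounded $H^\infty$-calculus of angle $0$, and via the dilation $x_n\mapsto tx_n$, which conjugates $a_2(x',0,t\xi',D_n)$ into $t^2a_2(x',0,\xi',D_n)$, this calculus is uniform in $\xi'$ in the appropriate scale-invariant norms; reading off the symbol-kernel in the rescaled variables shows it to be of order $0$, class $0$, with seminorms $\le C_\theta\|f\|_\infty$. Adding the two parts, $G_f:=\int_{\partial\Lambda_\theta}f(\lambda)G^D_\lambda\,d\lambda$ is an order $0$, class $0$ singular Green operator with symbol-kernel seminorms $\le C_\theta\|f\|_\infty$, whence $\|G_f\|_{\mathcal L(L_p(\Omega))}\le C_\theta\|f\|_\infty$ by the (quantitative form of the) $L_p$-continuity of order $0$, class $0$ operators in Boutet de Monvel's calculus — valid since $0>-1+1/p$, cf.\ the mapping property recalled above and \cite{Grubb1990}. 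Together with the bound for $Q_{\lambda,+}$ this would complete the proof.

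The step I expect to be the main obstacle is precisely the treatment of the leading term $G^{D,0}_\lambda$ — equivalently, the uniformity of $C_\theta$ over \emph{all} $f\in H_*^\infty(\Lambda_\theta)$. A crude estimate $\|G^D_\lambda\|_{\mathcal L(L_p)}\le C_\theta|\lambda|^{-1}$ already renders $\int f(\lambda)G^D_\lambda\,d\lambda$ absolutely convergent for each individual $f\in H_*^\infty$, but only with a constant that deteriorates as the decay exponent of $f$ tends to $0$ and is therefore useless. The $|\lambda|$-integral of the leading symbol-kernel is only logarithmically divergent, and that divergence is removed not by estimating but by recognizing the leading contribution as ``$f$ evaluated at the model Dirichlet operator'', to which Cauchy's formula and the bound $|f|\le\|f\|_\infty$ apply; the joint homogeneity of order $-2$ in $(\xi',|\lambda|^{1/2})$ is what makes the $\lambda$-integration of the remaining terms an honest gain of orders. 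The analogous (and classical) treatment of the pseudodifferential symbol on the closed manifold, as well as the book-keeping with the parameter-dependent symbol-kernels, then goes through within Grubb's parameter-dependent Boutet de Monvel calculus.
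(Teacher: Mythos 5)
Your decomposition into $Q_{\lambda,+}$ (pulled back to the closed manifold via $r^+,e^+$ and handled by the $H^\infty$-calculus there) plus $G^D_\lambda$, and the further split of $G^D_\lambda$ into leading part and lower-order remainder, is a sensible route, and those two pieces are handled correctly: the reduction of $Q_{\lambda,+}$ to $r^+ f(A) e^+$ is fine, and the remainder has parameter-order $\le -3$ in $(\xi',\mu)$, so Theorem~\ref{theorem_spectral_decay}(b) makes the contour integral absolutely convergent with constant $C_\theta\|f\|_\infty$. However, the paper's own proof is essentially a citation: it observes that $Q_\lambda$ and $G^D_\lambda$ are parameter-dependent operators of order $-2$ in Boutet de Monvel's calculus and refers to \cite{Grubb1996} and \cite{CSS2007}; the argument carried out in those references (and reproduced for the degenerate case in the paper's own Lemma~\ref{lem:ess_est}) is \emph{not} the one you sketch for the leading term.

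The gap is exactly where you yourself expect it: you assert that $\int_{\partial\Lambda_\theta} f(\lambda)\,G^{D,0}_\lambda\,d\lambda$, identified as the singular-Green part of ``$f$ applied to the one-dimensional model Dirichlet operator'', is an order-$0$, class-$0$ singular Green operator with symbol-kernel seminorms $\lesssim\|f\|_\infty$. What the self-adjointness (and the dilation) actually gives you is a bound on $\|f(a_2(x',0,\xi',D_n)_D)\|_{\mathcal L(L_2(\mathbb R_+))}$, uniformly in $(x',\xi')$ after scaling. That is a single-operator-norm bound per frozen $(x',\xi')$; it does not, by itself, control the full family of singular Green symbol-kernel seminorms (the $\partial_{x'}^\beta\partial_{\xi'}^\alpha$-derivatives, with the correct $\langle\xi'\rangle$-gains, and the $x_n^k\partial_{x_n}^l y_n^{k'}\partial_{y_n}^{l'}$-weighted $L^1/L^2$ estimates in the normal variables) that are required to invoke the $L_p$-mapping property of a class-$0$ Green operator. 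The $\xi'$-derivatives hit $\kappa^\pm(x',\xi',\lambda)$, produce factors not simply bounded by $\|f\|_\infty$, and the phrase ``reading off the symbol-kernel in the rescaled variables'' is precisely the step that would need to be proved; it is the entire content of the lemma for the boundary term, and self-adjointness on $L_2(\mathbb R_+)$ does not give it.

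The mechanism the references (and the paper, in Lemma~\ref{lem:ess_est} for $G^T_\lambda$) use instead is concrete and robust: one writes the leading symbol-kernel as $\tilde g^{D}_{(-2)}(x',\xi',\mu;x_n,y_n)=\sigma(x',\xi',\mu)\,e^{-\kappa^+(x',\xi',\mu)x_n}e^{-\kappa^-(x',\xi',\mu)y_n}$, uses $\Re\kappa^\pm\gtrsim|\xi',\mu|$ to factor off $e^{-\omega\mu(x_n+y_n)}$, observes that what remains is a uniformly bounded family of order-$0$ operators (Lemma~\ref{lem_symbol_h}), and then performs the $\mu$-integration against $|f(\lambda)|\le\|f\|_\infty$ explicitly: $\int_0^\infty \mu\,e^{-\mu(x_n+y_n)}\,d\mu\lesssim(x_n+y_n)^{-1}$, which is the kernel of the Hilbert transform, bounded on $L_p(\mathbb R_+)$. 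That route avoids any claim about the functional calculus of the frozen model and is what actually closes the estimate; your proposal stops short of it.
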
 

Lemma \ref{lem:q+g} is well-known, the proof relies on the fact that the operators $Q_\lambda$ and 
$G_\lambda^D$ are parameter-dependent operators of order $-2$ in Boutet de Monvel's calculus, if one writes $-\lambda = \mu^2e^{i\theta}$ and considers 
$Q_\lambda$ and $G^D_\lambda$ as functions of $\mu$, see e.g.\ Grubb \cite{Grubb1996}. For the more general situation of a manifold with boundary and conic singularities, 
see \cite{CSS2007}.

It remains to study the term $G^T_\lambda$. 
It will turn out that $TK_\lambda^D$ is a hypoelliptic pseudodifferential operator of order $1$ on the boundary. 
As we will see, it has a parametrix with local symbols in the Hörmander class $S^0_{1,1/2}$ which then agrees with $S^{-1}_\lambda$ up to a regularizing operator. In order to treat the composition of $S^{-1}_\lambda$  with the operators $K_\lambda^D$ and 
$Q_{\lambda,+}+G^D_\lambda$, we will need an extension of the classical Boutet de Monvel calculus. 

\textbf{Acknowledgement.} The results in this article are based on the second authors thesis, see \cite{Krietenstein2019}. The authors thank K. Taira for helpful discussions.

\section{An Extended Boutet de Monvel Type Calculus}
We recall the algebra $\mathcal H$ of functions on $\mathbb R$: It is the direct sum
$$\mathcal H = \mathcal H^+\oplus \mathcal H^-_{-1}\oplus \mathcal H', 
$$
where 
\begin{align*}
\mathcal{H}^+:=\{\mathcal{F}(e^+u):u\in \mathcal{S}(\mathbb{R}_+)\}, \quad \mathcal{H}_{-1}^-:=\{\mathcal{F}(e^-u):u\in \mathcal{S}(\mathbb{R}_-)\}
\end{align*}
and $\mathcal H'$ is the space of all polynomials on $\mathbb R$. 
The sum is direct, since the functions in $\mathcal H^+$ and $\mathcal H^-_{-1}$ 
decay to first order. 

It will be helpful to use also weighted Sobolev spaces on $\mathbb R_+$: 
For $\mathbf s=(s_1, s_2)\in \mathbb R^2$ we let $H^{\mathbf s}_p(\mathbb R_+) $ denote the space of all $u\in \mathcal D'(\mathbb R_+)$ such that $\skp{x}^{s_2}u$ belongs to the ordinary  Sobolev space 
$H^{s_1}_p(\mathbb R_+)$. 
We then have 
\begin{eqnarray}\label{eq:limits.1}
\mathcal S(\mathbb R_+)&=& \proj H_p^{\mathbf s}(\mathbb R_+) \text{ and  }\\
\label{eq:limits.2}
\mathcal S'(\mathbb R_+)&=&\ind  (H_p^{\mathbf s}(\mathbb R_+))' = \ind \dot H_{1-1/p}^{\mathbf s}(\mathbb  R_+),
\end{eqnarray} 
where the limits are taken over $\mathbf s\in \mathbb R^2$ and  
$\dot H_q^{\mathbf s}(\mathbb  R_+)$ denotes all distributions $u$ in 
$H_q^{\mathbf s}(\mathbb R)$ for which $\supp u\subset \overline{\mathbb R}_+$.

\subsection*{Operator-valued symbols}

Let $E,F$ be Banach spaces with strongly continuous group actions 
$\kappa^E_\lambda$, $\kappa^F_\lambda$, $\lambda>0$, as introduced by Schulze in \cite{Schulze1991}.
Given $q\in \mathbb N$, $m\in \mathbb R$, $0\le \delta\le 1$ $\delta <1$, 
we call a function $a=a(y,\eta)\in C^\infty(\mathbb R^q\times \mathbb R^q,\mathcal L(E,F))$
an operator-valued symbol in $S^m_{1,\delta}(\mathbb R^q\times \mathbb R^q; E,F)$
if, for all multi-indices $\alpha, \beta$ there exist constants $C_{\alpha, \beta}$ such that 
$$\|\kappa^F_{\skp{\eta}^{-1}}
D^\alpha_\eta D^\beta_y a(y,\eta) \kappa^E_{\skp\eta}\|_{\mathcal L(E,F)}
\le C_{\alpha, \beta} \skp{\eta}^{m-|\alpha|+\delta |\beta|}.$$

In the sequel, we will mostly have the case where and $E$ and $F$ are 
either $\mathbb C$ or (weighted) Sobolev spaces over $\mathbb R$ or 
$\mathbb R_+$. On $\mathbb C$ we will use the trivial group action; on the Sobolev
spaces we will use the action given by 
$\kappa_\lambda u(t) = \sqrt\lambda f(\lambda t)$. 
Note that this group action is unitary on $L^2(\mathbb R)$ and $L^2(\mathbb R_+)$. 
Using the representations \eqref{eq:limits.1} and \eqref{eq:limits.2}, 
the above definition extends to the case, where $E=\mathcal S(\mathbb R_+)$,
$E=\mathcal S'(\mathbb R_+)$ or $F=\mathcal S(\mathbb R_+)$,
see \cite{Schrohe2001} for details. 

\subsection*{The transmission condition}
\begin{defn} A symbol $p\in S^m_{1,\delta}(\mathbb{R}^n\times\mathbb{R}^n)$ satisfies the transmission condition at $x_n=0$ provided that, for all $k\in \mathbb{N}_0$
\begin{align*}
p_{[k]}(x',\xi',\xi_n):=(\partial^k_{x_n}p)(x',0,\xi',\langle\xi'\rangle\xi_n)\in S^{m+\delta k}_{1,\delta}(\mathbb{R}^{n-1}\times\mathbb{R}^{n-1})\hat{\otimes}\mathcal{H}
\end{align*}
We write 
$p\in \mathcal{P}^m_{1,\delta}(\mathbb{R}^{n-1}\times\mathbb{R}^{n-1})$.
\end{defn}

\begin{rem}$\mathcal{P}^\infty_{1,\delta}:=\bigcup_m{\mathcal{P}}^m_{1,\delta}$ is closed under the usual symbol operations, i.e. addition, pointwise multiplication and inversion, differentiation, Leibniz product and asymptotic summation.
We also have $S^{-\infty}=\bigcap_m{\mathcal{P}}^m_{1,\delta}$. 
\end{rem}

\begin{thm}\label{thm_transmission_normal} 
Let $p\in \mathcal{P}^m_{1,\delta}(\mathbb{R}^n\times\mathbb{R}^n)$.
Then
\begin{align*}
\op_n(p)_+\in 
S_{1,\delta}^{m}(\mathbb{R}^{n-1}\times\mathbb{R}^{n-1};
\mathcal S(\mathbb R_+), \mathcal S(\mathbb R_+)).
\end{align*}
\end{thm}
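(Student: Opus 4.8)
The plan is to verify the operator-valued symbol estimates for $\op_n(p)_+$ by reducing them, via the transmission condition, to the known behaviour of the algebra $\mathcal H$ under the half-space truncation $r^+e^+$. First I would fix the tangential variables and view $\op_n(p)(x',\cdot)$ as an operator acting in the normal variable $x_n$; writing $p(x',x_n,\xi',\xi_n)$ and Taylor-expanding in $x_n$ at $x_n=0$ is what brings the functions $p_{[k]}$ into play. The point is that, because $p\in\mathcal P^m_{\rho,\delta}$, each rescaled normal symbol $p_{[k]}(x',\xi',\xi_n)=(\partial_{x_n}^kp)(x',0,\xi',\langle\xi'\rangle\xi_n)$ lies in $S^{m+\delta k}_{\rho,\delta}(\mathbb R^{n-1}\times\mathbb R^{n-1})\hat\otimes\mathcal H$, so after applying $\op_n$ in $\xi_n$ and truncating we get, for each $k$, a parameter-dependent family of operators on $\mathbb R_+$ whose $\xi_n$-symbol is in the plus-part $\mathcal H^+$ up to a polynomial — i.e. a family of singular Green/$\Psi$DO-type operators on the half-line. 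The group action $\kappa_\lambda u(t)=\sqrt\lambda\,u(\lambda t)$ is exactly the one under which the substitution $\xi_n\mapsto\langle\xi'\rangle\xi_n$, $x_n\mapsto x_n/\langle\xi'\rangle$ becomes the intertwining relation, so the rescaling hidden in the definition of $p_{[k]}$ is precisely what produces the $\kappa^F_{\langle\eta\rangle^{-1}}(\cdots)\kappa^E_{\langle\eta\rangle}$ sandwich in the symbol seminorms.

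Next I would carry out the estimates in two stages. Stage one: show that each individual term $\op_n(p_{[k]})_+$, built from a symbol in $S^{m+\delta k}_{\rho,\delta}\hat\otimes\mathcal H$, gives rise after the $\kappa$-conjugation to a bounded operator $\mathcal S(\mathbb R_+)\to\mathcal S(\mathbb R_+)$ with the required seminorm bounds $\langle\eta\rangle^{m-\rho|\alpha|+\delta|\beta|}$; here one uses the elementary mapping properties of $r^+\op(h)e^+$ for $h\in\mathcal H$ (a $\Psi$DO on $\mathbb R$ plus a one-sided Green operator), together with \eqref{eq:limits.1}–\eqref{eq:limits.2}, which let us test boundedness on $\mathcal S(\mathbb R_+)$ against all the weighted Sobolev seminorms. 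The $\delta k$ loss in each term is exactly compensated by the $\langle\eta\rangle^{-k}$ factors one extracts when differentiating — the scaling $x_n\mapsto x_n/\langle\eta\rangle$ turns the Taylor factor $x_n^k/k!$ into $\langle\eta\rangle^{-k}$ times a bounded operator, and the rescaled symbol absorbs the remaining $\langle\eta\rangle^{\delta k}$. Stage two: control the Taylor remainder. Writing $p=\sum_{k<N}x_n^k\,p^{(k)}(x',\xi)/k!+x_n^N R_N$, I would truncate the finite sum term by term as above and show the remainder $\op_n(x_n^NR_N)_+$ contributes a symbol of order $m-\rho N$ (or better, controlled via $\mathcal P^\infty_{\rho,\delta}=\bigcap_m\mathcal P^m_{\rho,\delta}$ absorbing the error into the $S^{-\infty}$ tail for $N$ large), so that the asymptotic sum $\sum_k\op_n(p_{[k]})_+$ reproduces $\op_n(p)_+$ modulo a smoothing operator-valued symbol, which trivially satisfies the estimate. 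Assembling stages one and two and summing the resulting seminorm bounds over $k$ (the series converges because each successive term is lower order by $\rho-\delta\ge0$ in $\alpha$ and is damped by $\langle\eta\rangle^{-k(\rho-\delta)}$ relative to what is needed — and when $\rho=\delta$ one argues with the sharper per-term bound and the explicit $\langle\eta\rangle^{-k}$ from the scaling) gives the claim.

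The main obstacle I anticipate is the borderline case $\rho=\delta$ (in particular $\rho=\delta=1/2$, which is the case actually needed for $S_\lambda$ downstream): here the naive Taylor-remainder estimate does not gain any decay in $\langle\eta\rangle$ from the $-\rho N$ order, so one cannot simply push $N\to\infty$ and throw the remainder away. The fix is to be careful that the gain of $\langle\eta\rangle^{-k}$ comes not from the symbol order but from the genuine scaling of the normal variable — i.e. from the factor $x_n^k$ being conjugated by $\kappa_{\langle\eta\rangle}$ into $\langle\eta\rangle^{-k}(\langle\eta\rangle x_n)^k$ with $(\langle\eta\rangle x_n)^k$ acting boundedly on the weighted space $H^{\mathbf s}_p(\mathbb R_+)$ with a shift in the weight exponent $s_2$. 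Tracking this shift through \eqref{eq:limits.1}–\eqref{eq:limits.2} and checking it is uniform is the delicate bookkeeping; the rest (boundedness of truncated $\Psi$DOs with $\mathcal H^+$-symbols, Leibniz expansions, convergence of the asymptotic sum) is standard once the correct scaling is installed. A secondary technical point is handling the polynomial part $\mathcal H'$ of $\mathcal H$: polynomials in $\xi_n$ correspond under $\op_n$ to normal derivatives $D_{x_n}^j$, which are unbounded on $L^2(\mathbb R_+)$; one checks these only occur paired with genuine $\mathcal H^+$-factors (since $p$ itself is an order-$m$ symbol, its full $\xi_n$-symbol is not polynomial), so that the composition lands in $\mathcal L(\mathcal S(\mathbb R_+),\mathcal S(\mathbb R_+))$ after all — this is where the decomposition $\mathcal H=\mathcal H^+\oplus\mathcal H^-_{-1}\oplus\mathcal H'$ and the directness of the sum are used.
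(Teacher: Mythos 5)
Your central mechanism is the same one the paper relies on: the conjugation identity
\[
\kappa_{\langle\xi'\rangle^{-1}}\,\op_n(p)\,\kappa_{\langle\xi'\rangle}=\op_n(q_{x',\xi'}),\qquad q_{x',\xi'}(x_n,\xi_n)=p\big(x',x_n/\langle\xi'\rangle,\xi',\langle\xi'\rangle\xi_n\big),
\]
which converts the operator-valued symbol estimates into symbol estimates for the rescaled family $q_{x',\xi'}$. The paper's proof is essentially just this observation plus a citation to the $(1,0)$ case (Theorem 2.12 in \cite{Schrohe2001}), together with the remark that the same argument goes through for general $(\rho,\delta)$; you reconstruct the inside of that cited proof via the Taylor expansion at $x_n=0$, which is exactly what the definition of $\mathcal P^m_{\rho,\delta}$ via the $p_{[k]}$ is tailored for. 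So you are not on a different route, just unpacking the cited one.

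There is, however, a bookkeeping error worth correcting, because it leads you to flag a difficulty that is not there. The gain per Taylor term does not depend on $\rho$ at all. Conjugating $x_n^k$ by $\kappa_{\langle\xi'\rangle}$ yields $\langle\xi'\rangle^{-k}x_n^k$ (the extra factor $(\langle\xi'\rangle x_n)^k$ acts boundedly on the weighted scale), while $p_{[k]}$ has order $m+\delta k$; the $k$-th term therefore has order $m+\delta k-k=m-(1-\delta)k$. The asymptotic sum thus converges with rate $1-\delta>0$, which is guaranteed by the standing hypothesis $\delta<1$ in the definition of $S^m_{\rho,\delta}$ -- no separate discussion of $\rho=\delta$ (in particular of $\rho=\delta=1/2$) is needed, and the claimed damping by $\langle\eta\rangle^{-k(\rho-\delta)}$ is wrong. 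Your ``fix'' (extract the explicit $\langle\eta\rangle^{-k}$ from the scaling) is in fact the \emph{only} mechanism, and once you write the orders correctly it is automatic, not delicate. Similarly, the remainder $\op_n(x_n^N R_N)_+$ is of order $m-(1-\delta)N$, not $m-\rho N$. The secondary point about the polynomial part $\mathcal H'$ is correct and handled exactly as you say; this is precisely why the direct-sum decomposition of $\mathcal H$ is stipulated.

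One more small caveat: when you invoke the Taylor remainder $R_N$, you need $(\partial_{x_n}^k p)(x',x_n,\xi',\langle\xi'\rangle\xi_n)$ under control uniformly in $x_n$ near $0$, not only at $x_n=0$. This is standard and follows from $p\in S^m_{\rho,\delta}(\mathbb R^n\times\mathbb R^n)$ together with the transmission estimates at $0$, but it is worth saying; the paper avoids the issue by working with $q_{x',\xi'}$ directly rather than its Taylor coefficients.
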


\begin{proof}
This follows from the fact that 
$\kappa_{\skp{\xi'}^{-1}}\op_n (p)\kappa_{\skp{\xi'}} = \op_n (q_{x',\xi'})$, 
where $q_{x',\xi'}(x_n,\xi_n) = p(x', x_n/\skp{\xi'}, \xi', \skp{\xi'}\xi_n)$ and the 
corresponding proof for H\"or\-man\-der type $(1,0)$; this is Theorem 2.12 in \cite{Schrohe2001}. The arguments carry over to general $(1,\delta)$.
\end{proof}

\subsection*{Potential, trace and singular Green symbols}
\begin{defn} Let $m\in \mathbb R$, $d\in \mathbb N_0$. All functions, below, may be matrix valued. 
\begin{itemize}
\item
A function $k\in C^\infty(\mathbb{R}^{n-1}\times\mathbb{R}^{n-1}\times\mathbb{R})$ belongs to the space $\mathcal{K}^m_{1,\delta}(\mathbb{R}^{n-1}\times\mathbb{R}^{n-1})$ of potential symbols of order $m$ and Hörmander type $(1,\delta)$, if
\begin{align*}
k_{[0]}(x',\xi';\xi_n):=k(x',\xi';\langle\xi'\rangle\xi_n)\in S^{m-1}_{1,\delta}(\mathbb{R}^{n-1}\times\mathbb{R}^{n-1})\hat{\otimes}\mathcal{H}^+_{\xi_n}.
\end{align*}
\item
A function $t\in C^\infty(\mathbb{R}^{n-1}\times\mathbb{R}^{n-1}\times\mathbb{R})$ 
belongs to the space 
$\mathcal{T}^{m,d}_{1,\delta}(\mathbb{R}^{n-1}\times\mathbb{R}^{n-1})$ 
of trace symbols of order $m$, class $d$ and Hörmander type $(1,\delta)$, if
\begin{align*}
t_{[0]}(x',\xi';\xi_n):=t(x',\xi';\langle\xi'\rangle\xi_n)\in S^{m}_{1,\delta}(\mathbb{R}^{n-1}\times\mathbb{R}^{n-1})\hat{\otimes}\mathcal{H}^-_{d-1}.
\end{align*}
\item A function $g\in C^\infty(\mathbb{R}^{n-1}\times\mathbb{R}^{n-1}\times\mathbb{R}\times\mathbb{R})$ belongs to the space
$\mathcal{G}^{m,d}_{1,\delta}(\mathbb{R}^{n-1}\times\mathbb{R}^{n-1})$
 of singular Green symbols of order $m$, class $d$ and Hörmander type $(1,\delta)$, if
\begin{align*}
g_{[0]}(x',\xi';\xi_n,\eta_n):=g(x',\xi';\langle\xi'\rangle\xi_n,\langle\xi'\rangle\eta_n)\in S^{m-1}_{1,\delta}(\mathbb{R}^{n-1}\times\mathbb{R}^{n-1})\hat{\otimes}\mathcal{H}^+_{\xi_n}\hat{\otimes}\mathcal{H}^{-}_{d-1,\eta_n}.
\end{align*}
\end{itemize}
\end{defn}
The spaces $\mathcal{K}^m_{1,0}$, $\mathcal{T}^m_{1,0}$ and $\mathcal{G}^m_{1,0}$ are denoted by Grubb in \cite{Grubb1996} as $S_{1,0}^{m-1}(\mathbb{R}^{n-1}\times\mathbb{R}^{n-1};\mathcal{H}^+)$, $S^{m}_{1,0}(\mathbb{R}^{n-1}\times\mathbb{R}^{n-1};\mathcal{H}^-_{d-1})$, and $S^{m-1}_{1,0}(\mathbb{R}^{n-1}\times\mathbb{R}^{n-1};\mathcal{H}^+\hat{\otimes}\mathcal{H}^-_{d-1})$. Rempel and Schulze denote them in \cite{Rempel1982} by $\mathfrak{K}^{m-1}(\mathbb{R}^{n-1}\times\mathbb{R}^n)$, $\mathfrak{T}^{m,d}(\mathbb{R}^{n-1}\times\mathbb{R}^n)$ and $\mathfrak{B}^{m-1,d}(\mathbb{R}^{n-1}\times\mathbb{R}^{n+1})$. They are Fréchet spaces with the topologies induced by the scaled functions.
For fixed $(x',\xi')$ the symbols above define Wiener-Hopf operators. Hence we obtain an action in the normal direction:
\begin{align*}
[\op_n k](x',\xi'):&=r^+\mathcal{F}_{\xi_n\rightarrow x_n}^{-1}k(x',\xi'):\mathbb{C}\rightarrow\mathcal{S}(\mathbb{R}_+),\\
[\op_n t](x',\xi'):&=I_{\xi_n}^+t(x',\xi';\xi_n)\mathcal{F}_{y_n\rightarrow \xi_n}e^+:\mathcal{S}(\mathbb{R}_+)\rightarrow\mathbb{C}\;\;\text{and}\\
[\op_n g](x',\xi'):&=r^+\mathcal{F}_{\xi_n\rightarrow x_n}^{-1}I^+_{\eta_n}g(x',\xi';\xi_n,\eta_n)\mathcal{F}_{y_n\rightarrow\eta_n}e^+:\mathcal{S}(\mathbb{R}_+)\rightarrow\mathcal{S}(\mathbb{R}_+),
\end{align*}
where $I^+$ is the plus-integral, see \cite[p.166]{Grubb1996}. 
We can interpret $\op_nk$, $\op_n t$ and 
$\op_ng$ as operator-valued symbols. Depending on the class there are several extensions possible.
\begin{thm}[Description by operator-valued symbols]\label{thm_operator_description}
Let $\mathbf{s}\in\mathbb{R}^2$ with $s_1>d-1/2$. 
The following maps are bounded and linear.
\begin{enumerate}
\item $\op_n:\mathcal{G}_{1,\delta}^{m,0}(\mathbb{R}^{n-1}\times\mathbb{R}^{n-1})\rightarrow S^{m}_{1,\delta}(\mathbb{R}^{n-1}\times\mathbb{R}^{n-1};\mathcal{S}'(\mathbb{R}_+){},\mathcal{S}(\mathbb{R}_+){})$
\item $\op_n:\mathcal{G}_{1,\delta}^{m,d}(\mathbb{R}^{n-1}\times\mathbb{R}^{n-1})\rightarrow S^{m}_{1,\delta}(\mathbb{R}^{n-1}\times\mathbb{R}^{n-1};H_2^\mathbf{s}(\mathbb{R}_+){},\mathcal{S}(\mathbb{R}_+){})$
\item $\op_n:\mathcal{K}_{1,\delta}^{m}(\mathbb{R}^{n-1}\times\mathbb{R}^{n-1})\rightarrow S^{m-1/2}_{1,\delta}(\mathbb{R}^{n-1}\times\mathbb{R}^{n-1};\mathbb{C},\mathcal{S}(\mathbb{R}_+){})$
\item $\op_n:\mathcal{T}_{1,\delta}^{m,0}(\mathbb{R}^{n-1}\times\mathbb{R}^{n-1})\rightarrow S^{m+1/2}_{1,\delta}(\mathbb{R}^{n-1}\times\mathbb{R}^{n-1};\mathcal{S}'(\mathbb{R}_+){},\mathbb{C})$
\item $\op_n:\mathcal{T}_{1,\delta}^{m,d}(\mathbb{R}^{n-1}\times\mathbb{R}^{n-1})\rightarrow S^{m+1/2}_{1,\delta}(\mathbb{R}^{n-1}\times\mathbb{R}^{n-1};H_2^\mathbf{s}(\mathbb{R}_+){},\mathbb{C})$
\end{enumerate}
\end{thm}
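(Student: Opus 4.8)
The plan is to prove each of the five mapping statements by verifying the defining estimate for operator-valued symbols of type $(\rho,\delta)$, namely that
$$\bigl\|\kappa^F_{\skp{\xi'}^{-1}}\, D^\alpha_{\xi'} D^\beta_{x'}\, [\op_n(\cdot)](x',\xi')\, \kappa^E_{\skp{\xi'}}\bigr\|_{\mathcal L(E,F)}\le C_{\alpha,\beta}\skp{\xi'}^{(\text{order})-\rho|\alpha|+\delta|\beta|}$$
with the stated orders and the stated choices of $E,F$. The first reduction is to commute the group actions through. Here one uses the homogeneity built into the scaled symbols: $[\op_n k](x',\xi')$ acts by $\kappa^{\mathbb C}$ trivially and by $\kappa_\lambda u(t)=\sqrt\lambda u(\lambda t)$ on the $\mathcal S(\mathbb R_+)$-side, and a change of variables in the normal Fourier integral turns $\kappa^{\mathcal S(\mathbb R_+)}_{\skp{\xi'}^{-1}}[\op_n k](x',\xi')$ into the operator with kernel built from the scaled symbol $k_{[0]}(x',\xi';\xi_n)=k(x',\xi';\skp{\xi'}\xi_n)$, up to an explicit power $\skp{\xi'}^{-1/2}$ coming from the $\sqrt\lambda$ in $\kappa$ and the Jacobian of the substitution $\xi_n\mapsto\skp{\xi'}\xi_n$. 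This power is exactly the source of the shift by $\pm1/2$ between the $\mathcal H$-order of the symbol and the operator-valued order in items (3), (4), (5); for the Green operators in (1), (2) the two half-powers (one from $e^+$, one from $r^+$, i.e. one from each Fourier transform in the normal variable) cancel, which is why (1), (2) have order exactly $m$.

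The second step is, having reduced to the scaled symbols, to estimate the $\mathcal L(E,F)$-norms of the Wiener–Hopf type operators $r^+\mathcal F^{-1}_{\xi_n\to x_n} k_{[0]}$, $I^+ t_{[0]}\mathcal F e^+$, $r^+\mathcal F^{-1}I^+ g_{[0]}\mathcal F e^+$ uniformly in $(x',\xi')$, together with their $x'$- and $\xi'$-derivatives. Because $k_{[0]}\in S^{m-1}_{\rho,\delta}\hat\otimes\mathcal H^+_{\xi_n}$, etc., differentiating in $x'$ or $\xi'$ produces another symbol in the same class with order raised by $\delta$ per $x'$-derivative and lowered by $\rho$ per $\xi'$-derivative, so the scalar factor $\skp{\xi'}^{m-\rho|\alpha|+\delta|\beta|}$ splits off cleanly and it suffices to bound, for each fixed symbol in the respective class, the norm of the corresponding normal operator. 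For this one uses the standard mapping properties of $\mathcal H^\pm$-symbols: a symbol in $\mathcal H^+$ gives, via $r^+\mathcal F^{-1}$, a map $\mathbb C\to\mathcal S(\mathbb R_+)$ (this is essentially the definition of a potential operator, and boundedness into every $H^{\mathbf s}_2(\mathbb R_+)$ follows since elements of $\mathcal H^+$ are Fourier transforms of Schwartz functions on $\mathbb R_+$); a symbol in $\mathcal H^-_{d-1}$ is a sum of a polynomial part of degree $d-1$ and a part in $\mathcal H^-_{-1}$, and the plus-integral of it against $\mathcal F e^+u$ makes sense precisely when $u\in H^{s_1}_2$ with $s_1>d-1/2$ (the polynomial part pairs against $\partial_t^j u|_{t=0}$, $j\le d-1$, which requires $s_1>d-1/2$ by the trace theorem) — this is exactly the hypothesis $s_1>d-1/2$ in the theorem; and for the class-zero cases $d=0$ the polynomial part is absent, $\mathcal H^-_{-1}$-symbols integrate against any $u\in\mathcal S'(\mathbb R_+)$, which via \eqref{eq:limits.2} is why $E=\mathcal S'(\mathbb R_+)$ is admissible in (1) and (4).

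The third step is to assemble these observations using the projective/inductive limit descriptions \eqref{eq:limits.1}, \eqref{eq:limits.2}: continuity of $\op_n g:\mathcal S'(\mathbb R_+)\to\mathcal S(\mathbb R_+)$ in (1) means continuity $\dot H^{\mathbf s'}_{p'}(\mathbb R_+)\to H^{\mathbf s}_2(\mathbb R_+)$ for every $\mathbf s$ and some $\mathbf s'$, with symbol estimates uniform in the parameters, which one obtains from the Schwartz-kernel bounds on $g_{[0]}$ combined with the rapid decay of $\mathcal H^+\hat\otimes\mathcal H^-$ functions. The genuinely delicate point — and the one I would spend the most care on — is the interplay in (2) and (5) between the class $d>0$ and the regularity threshold $s_1>d-1/2$: one must check that after applying the group action $\kappa_{\skp{\xi'}}$ to $H^{\mathbf s}_2(\mathbb R_+)$ and differentiating in the cotangent variables, the polynomial-in-$\xi_n$ part of the trace/Green symbol — which is what carries the class — still produces bounded operators with the claimed order and with the $\skp{\xi'}$-powers landing correctly, since $\kappa_\lambda$ interacts with the weight $\skp{x}^{s_2}$ and with the normal derivatives $\partial_t^j|_{t=0}$ in a $\lambda$-dependent way (each $\partial_t^j|_0$ scaling like $\lambda^{j+1/2}$). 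Tracking these powers and confirming they are absorbed into the nominal order $m$ (resp. $m+1/2$) is the main bookkeeping obstacle; everything else is a routine transcription of the $(1,0)$-theory in Grubb \cite{Grubb1996} and Rempel–Schulze \cite{Rempel1982} to general $(\rho,\delta)$, which works because none of those arguments used $\rho=1,\delta=0$ beyond the structure already encoded in the symbol classes $S^m_{\rho,\delta}\hat\otimes\mathcal H$.
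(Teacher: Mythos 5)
The paper omits this proof entirely (``We omit the proof, which is straightforward.''), so there is no authoritative argument to compare yours against. Your sketch reconstructs the expected standard argument in its main lines: conjugation by the group actions $\kappa_{\langle\xi'\rangle^{\pm1}}$ together with the change of variables $\xi_n\mapsto\langle\xi'\rangle\xi_n$ turns $\op_n$ applied to the symbol into $\op_n$ applied to the scaled symbol $k_{[0]}$, $t_{[0]}$ or $g_{[0]}$, up to explicit powers of $\langle\xi'\rangle$; the $\mathcal H$-structure then supplies the uniform bounds for the normal Wiener--Hopf operators; and the threshold $s_1>d-1/2$ is exactly what the trace theorem requires so that $\gamma_j$, $j\le d-1$, coming from the polynomial parts of $\mathcal H^-_{d-1}$, are continuous on $H^{s_1}_2(\mathbb R_+)$. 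This is the argument the paper has in mind.

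Two points in your bookkeeping deserve cleaning up, though neither is a fundamental gap. First, the claim that for Green operators ``the two half-powers \dots cancel'' is misstated. Conjugating $\op_n g$ by $\kappa_{\langle\xi'\rangle^{\mp1}}$ produces $\langle\xi'\rangle^{1/2}$ on the $r^+\mathcal F^{-1}$ side and another $\langle\xi'\rangle^{1/2}$ on the $\mathcal F e^+$ side, and these \emph{multiply}: the net prefactor is $\langle\xi'\rangle^{+1}$, not $1$. That the operator nevertheless has order exactly $m$ is because the definition of $\mathcal G^{m,d}_{\rho,\delta}$ puts $g_{[0]}$ into $S^{m-1}_{\rho,\delta}\hat\otimes\mathcal H^+\hat\otimes\mathcal H^-_{d-1}$, so the extra $+1$ is absorbed by the $-1$ offset already built into the $\mathcal H$-order, not by a cancellation between the two Fourier transforms. (If you instead reckon shifts from the nominal superscript $m$ rather than from the $\mathcal H$-order, you get $-1/2$ for potentials and $+1/2$ for traces and these do sum to zero --- but then the same bookkeeping for the potential case should say the shift is $-1/2$ rather than $+1/2$; at present your text uses both conventions at once.) Second, when you say ``differentiating in $x'$ or $\xi'$ produces another symbol in the same class with order raised by $\delta$\dots so the scalar factor splits off cleanly,'' you are silently interchanging two non-commuting operations: the operator-valued symbol estimate asks you to differentiate the \emph{unscaled} $\op_n k$ and only then conjugate by the group actions, which is not the same as differentiating the scaled symbol $k_{[0]}$, since $\langle\xi'\rangle$ appears in the scaling. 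Concretely $(D_{\xi'}^\alpha k)_{[0]}\ne D_{\xi'}^\alpha(k_{[0]})$: the chain rule produces extra terms of the form $\xi_n\partial_{\xi_n}k_{[0]}$ times derivatives of $\langle\xi'\rangle$. These terms are harmless --- $\xi_n\partial_{\xi_n}$ preserves $\mathcal H^+$, $\mathcal H^-_{d-1}$ and $\mathcal H^+\hat\otimes\mathcal H^-_{d-1}$, as one checks via partial integration after going back to the kernel side --- but that fact needs to be stated and used; in your write-up it is assumed without mention. With those two fixes the argument is complete and is indeed, as you say, a routine transcription of the $(1,0)$-theory.
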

We omit the proof, which is straightforward. We also need the description via 
symbol-kernels. To this end we define
\begin{align*}
\widetilde{\mathcal{K}}^m_{1,\delta}:=\mathcal{F}_{\xi_n\rightarrow x_n}^{-1}\mathcal{K}^m_{1,\delta},\;\;\widetilde{\mathcal{T}}^{m}_{1,\delta}:=\overline{\mathcal{F}}_{\xi_n\rightarrow y_n}^{-1}\mathcal{T}^{m,0}_{1,\delta}\;\;\text{and}\;\;\widetilde{\mathcal{T}}^{m}_{1,\delta}:=\mathcal{F}^{-1}_{\xi_n\rightarrow x_n}\overline{\mathcal{F}}_{\eta_n\rightarrow y_n}^{-1}\mathcal{G}^{m,0}_{1,\delta}.
\end{align*}

\begin{thm}[Description by symbol-kernels]\label{thm_kernel_description}The following assertions hold:
\begin{itemize}
\item[(i)] For every operator-valued symbol $k\in S_{1,\delta}^m(\mathbb{R}^{n-1}\times\mathbb{R}^{n-1};\mathbb{C},\mathcal{S}(\mathbb{R}_+))$ there exists a unique $\tilde{k}\in \widetilde{\mathcal{K}}_{1,\delta}^m(\mathbb{R}^{n-1}\times\mathbb{R}^{n-1})$, such that $$[k(x',\xi')c](x_n)=\tilde{k}(x',\xi';x_n)c,\; 
c\in\mathbb{C} .$$
\item[(ii)] For every operator-valued symbol $t\in S_{1,\delta}^m(\mathbb{R}^{n-1}\times\mathbb{R}^{n-1};\mathcal{S}'(\mathbb{R}_+),\mathbb{C})$ there exists a unique $\tilde{t}\in \widetilde{\mathcal{T}}^{m,0}_{1,\delta}(\mathbb{R}^{n-1}\times\mathbb{R}^{n-1})$, such that $$t(x',\xi')u=\int_{\mathbb{R}_+}\tilde{t}(x',\xi';y_n)u(y_n) \,dy_n,\; 
u\in\mathcal{S}(\mathbb{R}_+).$$
\item[(iii)]For every operator-valued symbol $g\in S_{1,\delta}^m(\mathbb{R}^{n-1}\times\mathbb{R}^{n-1};\mathcal{S}'(\mathbb{R}_+),\mathcal{S}(\mathbb{R}_+))$ there exists a unique $\tilde{g}\in \widetilde{\mathcal{G}}^{m}_{1,\delta}(\mathbb{R}^{n-1}\times\mathbb{R}^{n-1})$, such that $$[g(x',\xi')u](x_n)=\int_{\mathbb{R}_+}\tilde{g}(x',\xi';x_n,y_n)u(y_n)\,dy_n,\; 
u\in\mathcal{S}(\mathbb{R}_+).$$ 
\end{itemize}
\end{thm}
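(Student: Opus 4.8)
The plan is to establish the correspondence between operator-valued symbols mapping into or out of $\mathcal S(\mathbb R_+)$ and the symbol-kernel spaces $\widetilde{\mathcal K}, \widetilde{\mathcal T}, \widetilde{\mathcal G}$ by exploiting the known description theorems for the Hörmander type $(1,0)$ case (as in Grubb \cite{Grubb1996} and Schrohe \cite{Schrohe2001}) together with the scaling argument already used in the proof of Theorem \ref{thm_transmission_normal}, and then to check that nothing in the estimates is sensitive to the passage from $(1,0)$ to general $(\rho,\delta)$. Concretely, for part (i), given $k\in S^m_{\rho,\delta}(\mathbb R^{n-1}\times\mathbb R^{n-1};\mathbb C,\mathcal S(\mathbb R_+))$, I would \emph{define} $\tilde k(x',\xi';x_n):=[k(x',\xi')1](x_n)$, i.e. apply $k(x',\xi')$ to the scalar $1\in\mathbb C$; linearity of $k(x',\xi')$ immediately gives $[k(x',\xi')\phi](x_n)=\tilde k(x',\xi';x_n)\phi$, and uniqueness is clear. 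The content is then to show $\tilde k\in\widetilde{\mathcal K}^m_{\rho,\delta}$, equivalently that $\mathcal F_{x_n\to\xi_n}\tilde k$, after the rescaling $\xi_n\mapsto\skp{\xi'}\xi_n$, lies in $S^{m-1}_{\rho,\delta}(\mathbb R^{n-1}\times\mathbb R^{n-1})\hat\otimes\mathcal H^+_{\xi_n}$.

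The key device is that the group action $\kappa_\lambda u(t)=\sqrt\lambda\,u(\lambda t)$ on $L^2(\mathbb R_+)$ and the trivial action on $\mathbb C$ convert the symbol estimates defining $S^m_{\rho,\delta}(\,\cdot\,;\mathbb C,\mathcal S(\mathbb R_+))$ into estimates on $\kappa_{\skp\xi'^{-1}}D^\alpha_{\xi'}D^\beta_{x'}k(x',\xi')$, uniformly in the seminorms of $\mathcal S(\mathbb R_+)$ (using the projective-limit representation \eqref{eq:limits.1}); unwinding the Fourier transform, $\kappa_{\skp\xi'^{-1}}$ acting on the kernel corresponds exactly to the rescaling $\xi_n\mapsto\skp{\xi'}\xi_n$ of $\hat{\tilde k}$ together with a factor $\skp{\xi'}^{-1/2}$. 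Matching powers of $\skp{\xi'}$ then produces precisely the order shift $m\mapsto m-1$ appearing in the definition of $\mathcal K^m_{\rho,\delta}$ (and, dually, the shift by $+1/2$ hidden in Theorem \ref{thm_operator_description}(iii)). Membership in $\mathcal H^+_{\xi_n}$ is a structural consequence of $\tilde k(x',\xi';\cdot)\in\mathcal S(\mathbb R_+)$ extended by zero: its Fourier transform lies in $\mathcal H^+$ by the very definition of $\mathcal H^+$, and smoothness and symbol bounds in $(x',\xi')$ are inherited from those of $k$. For (ii) one argues by duality/transposition: a $t\in S^m_{\rho,\delta}(\,\cdot\,;\mathcal S'(\mathbb R_+),\mathbb C)$ has, for each $(x',\xi')$, a Schwartz-kernel $\tilde t(x',\xi';y_n)$ on $\mathbb R_+$ by the Schwartz kernel theorem applied fiberwise (here the inductive-limit description \eqref{eq:limits.2} of $\mathcal S'(\mathbb R_+)$ is what guarantees the kernel is a genuine $\mathcal S(\mathbb R_+)$-function), and the same scaling bookkeeping places $\tilde t$ in $\widetilde{\mathcal T}^{m,0}_{\rho,\delta}$. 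For (iii) the argument is the composition of the two: fiberwise, $g(x',\xi')\colon\mathcal S'(\mathbb R_+)\to\mathcal S(\mathbb R_+)$ has a Schwartz kernel $\tilde g(x',\xi';x_n,y_n)\in\mathcal S(\mathbb R_+\times\mathbb R_+)$, and one checks that after the double rescaling $(\xi_n,\eta_n)\mapsto(\skp{\xi'}\xi_n,\skp{\xi'}\eta_n)$ its partial Fourier transforms land in $S^{m-1}_{\rho,\delta}\hat\otimes\mathcal H^+_{\xi_n}\hat\otimes\mathcal H^-_{-1,\eta_n}$, i.e. in $\mathcal G^{m,0}_{\rho,\delta}$.

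The main obstacle I anticipate is purely bookkeeping rather than conceptual: tracking the half-integer order shifts and the $\skp{\xi'}^{\pm1/2}$ Jacobian factors through the Fourier transform and the $\kappa$-conjugation so that the orders come out exactly as stated, and verifying that the $\mathcal H^\pm$-valued tensor structure (holomorphic extension in $\xi_n$ to a half-plane, decay of order one) is stable under differentiation in $(x',\xi')$ and is not disturbed by the rescaling. Since each of these facts is established verbatim in the $(1,0)$ literature and the estimates are homogeneous of the right degree, the extension to general $(\rho,\delta)$ is obtained by the same reasoning that proves Theorem \ref{thm_transmission_normal} — one simply replaces the exponent $m-|\alpha|$ by $m-\rho|\alpha|+\delta|\beta|$ throughout — which is precisely why the authors say the proof is straightforward and omit it. I would therefore present only the definition of the kernels, the scaling identity relating $\kappa_{\skp{\xi'}^{-1}}$-conjugation to the rescaling of the partial Fourier transforms, and a one-line remark that the remaining estimates are identical to \cite[Theorem 2.x]{Schrohe2001} with the obvious change of exponents.
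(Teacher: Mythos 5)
Your proposal takes essentially the same route as the paper, which simply cites \cite{Schrohe2001} (Theorems 3.7, 3.9) for the $(1,0)$ case and treats the passage to $(\rho,\delta)$ as a change of exponents: define the kernel fiberwise (evaluation at $1\in\mathbb C$ for (i), reflexivity/nuclearity of $\mathcal S(\mathbb R_+)$ for (ii) and (iii)), and translate the $\kappa_{\langle\xi'\rangle^{-1}}$-conjugation in the operator-valued symbol estimates into the rescaling $\xi_n\mapsto\langle\xi'\rangle\xi_n$ of the partial Fourier transform. The only caveat is nomenclature (what you invoke is really reflexivity and nuclearity of $\mathcal S(\mathbb R_+)$ rather than "the Schwartz kernel theorem"), but the substance and the bookkeeping you flag are exactly what the cited proofs carry out.
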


\begin{proof}
See Theorems 3.7 and 3.9 in \cite{Schrohe2001}.
\end{proof}
\begin{cor}
The maps $(1), (3)$, and $(4)$ in Theorem \ref{thm_operator_description} 
are bijections. The maps $(2)$ and $(5)$ are bijections onto their image, which is the set
of all operators of the form
\begin{align*}
&\op_n g_0+\sum_{j=0}^{d-1}\op_n k_j \gamma^+_j,\  g_0 \in \mathcal G^{m,0}_{1,\delta} (\mathbb R^{n-1} \times \mathbb R^{n-1}), k_j\in \mathcal{K}^{m-j}_{1,\delta}(\mathbb{R}^{n-1}\times\mathbb{R}^{n-1})\;\;\text{resp.}\\
&\op_n t_0+\sum_{j=0}^{d-1} s_j \gamma^+_j, \  t_0\in 
\mathcal{T}^{m,0}_{1,\delta}(\mathbb{R}^{n-1}\times\mathbb{R}^{n-1}),
s_j\in S^{m-j}_{1,\delta}(\mathbb{R}^{n-1}\times\mathbb{R}^{n-1}).
\end{align*}
\end{cor}

\begin{proof}
We get from symbols to operator-valued symbols, to symbol-kernels, 
and back to symbols by Theorem \ref{thm_operator_description}, 
Theorem \ref{thm_kernel_description} and the Fourier transform. 
For non-zero class we use the fact $I^+\xi^j\mathcal{F}e^+\phi=(-i)^j\gamma_j^+\phi$.
\end{proof}

\subsection*{Boundary symbols and operators}
We next define the space of boundary symbols of order $m$, class $d$ and 
Hörmander type $(1,\delta)$ by 
\begin{align*}
\mathcal{BM}^{m,d}_{1,\delta}:=\begin{pmatrix}
\mathcal{P}^m_{1,\delta}+\mathcal{G}^{m,d}_{1,\delta} &\mathcal{K}^m_{1,\delta}\\
\mathcal{T}^{m,d}_{1,\delta} & S^m_{1,\delta}
\end{pmatrix}
\end{align*}
It is clear from Theorems \ref{thm_operator_description} and \ref{thm_transmission_normal} that the action of $b\in\mathcal{BM}^{m,d}_{1,\delta}$ in the normal direction defines a 
matrix of operator-valued symbols
\begin{align*}
\op_n(b):=\begin{pmatrix}
\op_n(p)_++\op_n(g) & \op_n(k)\\
\op_n(t) & s
\end{pmatrix}
\end{align*}
We write $B:=\op[\op_nb]$ for the associated  operator. We denote the components of $B$  associated with $p,g,k,t$ and $s$ by $P_+,G,K,T,$ and $S$, respectively. 
It is well-known that these operators form an algebra for Hörmander type $(1,0)$. 
The proof given in \cite{Schrohe2001} extends to the case $(1,\delta)$ with obvious modifications.

\begin{thm}[Composition]
Composition yields  a bilinear and continuous map
\begin{align*}
\mathcal{BM}^{m,d}_{1,\delta}\times\mathcal{BM}^{m',d'}_{1,\delta}\rightarrow\mathcal{BM}^{m+m',\max(m'+d,d')}_{1,\delta},\;\;(b,b')\mapsto b\#b',
\end{align*}
where $\#$ is the Leibniz product of operator-valued symbols,  
given by the property that $\op(\op_nb)\op(\op_nb')=\op (\op_nb\#b')$. Moreover
\begin{align*}
b\#b'=pp'-p_0p_0'+b_0\circ_n b_0'+\mathcal{BM}^{m+m'-(1-\delta),\max(m+d',d)}_{1,\delta}.
\end{align*}
Here the subscript $0$ denotes the restriction to $x_n=0$ and $\circ_n$ denotes the point-wise composition, \cite[Theorem 2.6.1]{Grubb1996}. 
\end{thm}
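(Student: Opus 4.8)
The plan is to prove the Composition Theorem by reducing it to the corresponding known statement for H\"ormander type $(1,0)$, which is available as Theorem 2.6.1 in Grubb \cite{Grubb1996} (and whose proof in \cite{Schrohe2001} is explicitly noted to carry over). First I would observe that the whole content is local and symbolic: a boundary symbol $b\in\mathcal{BM}^{m,d}_{\rho,\delta}$ is, via $\op_n$, a matrix of operator-valued symbols in the classes described in Theorems \ref{thm_operator_description} and \ref{thm_transmission_normal}, and composition of the associated operators $B=\op[\op_n b]$ and $B'=\op[\op_n b']$ on $\mathcal S(\mathbb R^n_+)$ is governed by the usual oscillatory-integral Leibniz product of operator-valued symbols. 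So the task splits into two independent pieces: (a) show that the Leibniz product $\op_n b\,\#\,\op_n b'$ of two such matrices of operator-valued symbols is again of the correct form, i.e. equals $\op_n$ of some $b\#b'\in\mathcal{BM}^{m+m',\max(m+d',d)}_{\rho,\delta}$, with the stated asymptotic expansion; and (b) check the continuity (bilinearity is obvious) with respect to the Fr\'echet topologies.

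For step (a), I would work entry by entry in the $2\times2$ matrix. The diagonal pseudodifferential entry is the classical fact that $(P_+)(P'_+) = (PP')_+ + \text{singular Green}$, which is encoded in the identity $\op_n(p)_+\op_n(p')_+ = \op_n(pp')_+ + (\text{s.G. term coming from }p_0,p_0')$; here the correction is the leftover-integral term and is exactly the $-p_0p_0' + b_0\circ_n b_0'$ piece restricted to that slot, plus a lower-order remainder. The mixed entries (potential times trace giving a $\psi$do on the boundary, singular Green times potential giving a potential, etc.) are handled by the composition rules for the symbol classes $\mathcal K,\mathcal T,\mathcal G,S$, together with the key bookkeeping fact that $\op_n$ shifts orders by $\pm1/2$ in a way that makes all the order counts match; one also uses that $\mathcal H^+\cdot\mathcal H^-_{d-1}$-type products land in the right $\mathcal H$-subspaces, which is the algebra structure of $\mathcal H$ recalled at the start of the section. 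The class index $\max(m+d',d)$ arises precisely because a trace symbol of class $d'$ composed on the left by something of order $m$ can raise the class to $m+d'$, while the original left factor already contributes class $d$; this is the standard class-arithmetic and is unchanged from the $(1,0)$ case. The asymptotic-expansion statement $b\#b' = pp' - p_0p_0' + b_0\circ_n b_0' + \mathcal{BM}^{m+m'-(\rho-\delta),\cdots}_{\rho,\delta}$ follows by truncating the Leibniz series after the zeroth term: the first genuine correction involves one $\eta$-derivative on one factor and one $y$-derivative on the other, hence an order gain of $\rho-\delta$ rather than $1$, which is the only place the general $(\rho,\delta)$ exponents visibly differ from $(1,0)$.

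For step (b), continuity, I would note that all the individual maps in Theorems \ref{thm_operator_description} and \ref{thm_kernel_description}, the Fourier transform, and the oscillatory-integral defining $\#$ are continuous in the respective Fr\'echet topologies; composing continuous maps gives a continuous (jointly continuous, since bilinear and separately continuous on Fr\'echet spaces, or directly by estimating seminorms) map. Concretely one estimates a generic $\mathcal{BM}^{m+m',\cdots}_{\rho,\delta}$-seminorm of $b\#b'$ by finitely many seminorms of $b$ and of $b'$, using the symbol estimates of Theorem \ref{thm_operator_description} and the fact that $\kappa_\lambda$ is a group action so that the conjugations $\kappa^{-1}D^\alpha_\eta D^\beta_y(\cdot)\kappa$ behave well under composition via the Leibniz rule. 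I expect the main obstacle to be purely organizational rather than conceptual: carefully verifying that the half-integer order shifts introduced by $\op_n$ on potential and trace symbols (the $\pm1/2$ in items (3)--(5) of Theorem \ref{thm_operator_description}) are consistent across all nine products in the matrix multiplication, and that replacing $(1,0)$ by $(\rho,\delta)$ with $\delta\le\rho$, $\delta<1$ does not invalidate any of the oscillatory-integral manipulations (it does not: the standard $(\rho,\delta)$-calculus of operator-valued symbols works for exactly this range). Since \cite{Schrohe2001} already proves the $(1,0)$ case in this operator-valued-symbol language, the honest work is to point to that proof and check the two or three spots where an exponent $1$ must be read as $\rho-\delta$; I would present the argument at that level of detail rather than reproducing the full composition computation.
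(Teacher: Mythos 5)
Your proposal matches the paper's own treatment: the paper does not give a detailed proof either, but rather states immediately before the theorem that ``It is well-known that these operators form an algebra for H\"ormander type $(1,0)$. The proof given in \cite{Schrohe2001} extends to the case $(\rho,\delta)$ with obvious modifications.'' You have simply fleshed out what those ``obvious modifications'' amount to (entrywise reduction, the $\rho-\delta$ gain in the Leibniz remainder, continuity via the Fr\'echet seminorms), all consistent with the intended reduction.
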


The well-known mapping properties of Boutet de Monvel operators extend to 
operators of Hörmander type $(1,\delta)$. 
We refer to \cite{Grubb1990} for the proof of the following statement 
(in the case $\delta=0$).

\begin{thm} Let $b\in\mathcal{BM}^{m,d}_{1,\delta}$ and  $s>d+1/p-1$. Then
\begin{align*}
B=\op(\op_n b):H^s_p(\mathbb{R}^n_+)\oplus B^{s-1/p}_p(\mathbb{R}^n_+)\rightarrow H^s_p(\mathbb{R}^n_+)\oplus B^{s-m-1/p}_p(\mathbb{R}^n_+)
\end{align*}
is bounded. The map $b\mapsto B$ is continuous.
\end{thm}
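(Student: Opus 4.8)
The plan is to reduce the asserted mapping property to the corresponding statements for the individual entries of the matrix $b\in\mathcal{BM}^{m,d}_{1,\delta}$, namely $p$, $g$, $k$, $t$ and $s$, and then to deduce each of these from the operator-valued symbol descriptions in Theorems \ref{thm_operator_description} and \ref{thm_transmission_normal} together with a mapping theorem for scalar operator-valued pseudodifferential operators. The structure of the argument follows \cite{Grubb1990} and \cite{Schrohe2001}, so I expect no conceptual novelty; the point is merely to check that the passage from H\"ormander type $(1,0)$ to $(1,\delta)$ with $\delta<1$ does not destroy any of the estimates.

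First I would recall the abstract vector-valued result: if $a\in S^m_{1,\delta}(\mathbb R^{n-1}\times\mathbb R^{n-1};E,F)$ is an operator-valued symbol in the sense defined above, with $E$, $F$ Banach spaces carrying the group actions $\kappa_\lambda$, then $\op(a)$ extends to a bounded operator between the associated $E$- and $F$-valued Besov/Bessel-potential spaces over $\mathbb R^{n-1}$, with norm controlled by finitely many of the symbol seminorms; this is the standard Seeley/Grubb abstract Fourier-multiplier argument and remains valid for $\delta<1$ (one loses $\delta|\beta|$ in the $y$-derivatives, which is harmless as long as $\delta<1$). Concretely, for the group action $\kappa_\lambda u(t)=\sqrt\lambda\,u(\lambda t)$ on $L_2(\mathbb R_+)$ and its scales, the $E$-valued Bessel-potential space $H^s_p(\mathbb R^{n-1};H^{\mathbf t}_2(\mathbb R_+))$ identifies (up to equivalence of norms) with an anisotropic space that, via the standard interpolation/lifting with $\langle\xi'\rangle$, matches $H^s_p(\mathbb R^n_+)$ when the normal weight is chosen to correspond to $\mathbf t$; the same bookkeeping relates $H^s_p(\mathbb R^{n-1};\mathbb C)=H^s_p(\mathbb R^{n-1})$ to the boundary Besov space after the half-integer shift $-1/p$ coming from the trace. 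These identifications are exactly the content behind the order shifts $m\mp1/2$ appearing in parts (3)--(5) of Theorem \ref{thm_operator_description}.

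With this in hand the proof splits into four pieces. (a) For $P_+=\op(\op_n(p)_+)$ with $p\in\mathcal P^m_{1,\delta}$, Theorem \ref{thm_transmission_normal} gives $\op_n(p)_+\in S^m_{1,\delta}(\cdots;\mathcal S(\mathbb R_+),\mathcal S(\mathbb R_+))$, hence (restricting the group-action estimates to the relevant Sobolev scale and using the projective/inductive limit descriptions \eqref{eq:limits.1}, \eqref{eq:limits.2}) $\op_n(p)_+\in S^m_{1,\delta}(\cdots;H^{\mathbf s}_2(\mathbb R_+),H^{\mathbf s'}_2(\mathbb R_+))$ for the appropriate weights; apply the abstract theorem. (b) For $G=\op(\op_n(g))$ with $g\in\mathcal G^{m,d}_{1,\delta}$, write $g=\sum_{j=0}^d g_j\#\gamma_j^+$ with $g_j\in\mathcal G^{m-j,0}_{1,\delta}$ as in the Corollary; use part (2) of Theorem \ref{thm_operator_description} to get $\op_n(g_j)\in S^{m-j}_{1,\delta}(\cdots;H^{\mathbf s}_2(\mathbb R_+),\mathcal S(\mathbb R_+))$ and observe that $\gamma_j^+$ maps $H^s_p(\mathbb R^n_+)$ to $H^{s-j-1/p}_p(\mathbb R^{n-1})$ boundedly when $s>j+1/p-1$, whence the hypothesis $s>d+1/p-1$; apply the abstract theorem to $\op_n(g_j)$. (c)--(d) For $K$, $T$, $S$ use parts (3), (5) (or (4)) of Theorem \ref{thm_operator_description} together with the scalar $\psi$do mapping theorem on $\mathbb R^{n-1}$ and the same trace bounds, producing the Besov-space gains/losses of $1/p$. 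Summing the four contributions gives the claimed boundedness of $B$, and continuity of $b\mapsto B$ follows because each step is continuous in the respective Fr\'echet topologies.

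The main obstacle, and the only place where real care is needed, is the identification of the $E$-valued spaces with the honest Sobolev spaces on $\mathbb R^n_+$ and the precise matching of weights $\mathbf s$ in the normal variable: one must verify that the projective limit $\mathcal S(\mathbb R_+)=\proj H^{\mathbf s}_p(\mathbb R_+)$ and the inductive limit for $\mathcal S'(\mathbb R_+)$ interact correctly with the $\kappa$-scaling so that a symbol estimate with a finite set of seminorms on the $\mathcal S(\mathbb R_+)$-valued level descends to one on a fixed Sobolev scale — i.e. that the singular Green and trace symbols of class $d$ only require the finitely many normal derivatives encoded by the condition $s>d+1/p-1$, rather than the full Schwartz topology. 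For $\delta>0$ one also checks that the extra factor $\langle\eta\rangle^{\delta|\beta|}$ in the $y$-derivative estimates is absorbed harmlessly by the abstract multiplier theorem, which only needs finitely many derivatives and $\delta<1$; this is routine but should be stated. Everything else is bookkeeping, and I would simply refer to \cite{Grubb1990} for the $\delta=0$ prototype and indicate that the arguments carry over verbatim.
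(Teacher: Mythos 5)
The paper gives essentially no proof of this theorem: it simply cites Grubb's $L_p$-continuity result (\cite{Grubb1990}) for the case $\delta=0$ and asserts that the arguments extend to H\"ormander type $(1,\delta)$, and your reconstruction via the operator-valued-symbol machinery of Theorems~\ref{thm_transmission_normal} and~\ref{thm_operator_description} together with an operator-valued Mikhlin/Fourier-multiplier theorem is exactly the route that proof takes, so I regard this as the same approach. One small bookkeeping slip worth flagging: the trace $\gamma_j$ is bounded $H^s_p(\mathbb R^n_+)\to B^{s-j-1/p}_p(\mathbb R^{n-1})$ for $s>j+1/p$, not $s>j+1/p-1$; this cancels against the Corollary's indexing (for class $d$ the decomposition should run over $j=0,\dots,d-1$, matching $\mathcal H^-_{d-1}$, rather than $j=0,\dots,d$ as stated), so your final condition $s>d+1/p-1$ is still correct.
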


\begin{rem}
The above calculus and the continuity properties naturally extend to the case of 
operators acting on vector bundles over compact manifolds with boundary. 
\end{rem}
\section{The Resolvent}
For the proof of Theorem \ref{thm:main_result}, a suitable description of the resolvent $(A_T-\lambda)^{-1}$ is mandatory.
We explain the key idea of how this description is derived in the simple example, where $A=-\Delta$, $T=\gamma_0$, and $\nu=1$. Here, the benefit is that we can point out the main ideas. However, the majority of abstract arguments can be replaced by explicit computations. \\
In the article \cite{Agmon1962}, Shmuel Agmon proved a priori estimates for solutions of the following boundary value problem with spectral parameter:   
\begin{align}
\label{BVP:example_with_sepectral}
\begin{cases}
(1-\Delta-\lambda)_+u&=f\;\;\text{on}\;\;\mathbb{R}^n_+\\
\gamma_0u&=\phi\;\;\text{on}\;\;\mathbb{R}^{n-1}
\end{cases}.\;\;
\end{align}
Writing $\lambda=\mu^2e^{i\theta}$, we observe that, given a solution $u$ of \eqref{BVP:example_with_sepectral}, the function $\tilde{u}:=u\otimes e_\mu$ with $e_\mu(z)=e^{i\mu z}$ solves the elliptic boundary problem
\begin{align}
\label{BVP:example}
\begin{cases}
(1-\Delta+e^{i(\pi+\theta)}D_z^2)_+\tilde{u}&=\tilde{f}\;\;\text{on}\;\;\mathbb{R}^{n+1}_+\\
\gamma_0\tilde{u}&=\tilde{\phi}\;\;\text{on}\;\;\mathbb{R}^n.
\end{cases}
\end{align}
with $\tilde{f}=f\otimes e_\mu$ and $\tilde{\phi}=\phi\otimes e_\mu$. 
For \eqref{BVP:example}, a priori estimates are well-known, but for our purpose, they are not sufficient. However, the basic idea can be extended to provide a relation between the inverses of \eqref{BVP:example} and \eqref{BVP:example_with_sepectral}. The following three operators are of interest:
\begin{align*}
Q_\theta&:=r^+\mathcal{F}^{-1}(\langle\xi\rangle^2+e^{i(\pi+\theta)}\zeta^2)^{-1}\mathcal{F}e^+,\\
K_\theta&:= r^+\mathcal{F}^{'-1}e^{-\kappa_\theta(\xi',\zeta)x_n}\mathcal{F}^{'},\;\;\text{and}\\
G_\theta&:=-K_\theta\gamma_0Q_\theta.
\end{align*}
Here, $\kappa_\theta(\xi',\zeta)$ is the root of the polynomial $\xi_n\mapsto a_{\theta}(\xi,\zeta):=\langle\xi\rangle^2+e^{i(\pi+\theta)}\zeta^2$, with positive real part. Furthermore, $\mathcal F$ and $\mathcal{F}'$, respectively, denote the Fourier transform with respect to all variables and the tangential variables, respectively.  
The identities $A_\theta Q_\theta=1$, $A_\theta K_\theta=0$, $\gamma_0K_\theta=1$, and $\gamma_0(Q_\theta+G_\theta)=0$ can be verified in a quick calculation. Therefore:
\begin{align}
\label{eq:example_inverse}
\begin{pmatrix}
A_{\theta,+}\\
\gamma_0
\end{pmatrix}^{-1}=
\begin{pmatrix}
Q_{\theta,+}+G_\theta & K_\theta
\end{pmatrix}.
\end{align}
The operators belong to Boutet de Monvel's calculus. We denote the symbols by lower case letters. The solution operators to Problem \eqref{BVP:example} and \eqref{BVP:example_with_sepectral} are related.  In order to reveal this relation, we need the following result. 
\begin{lem}\label{lem:Argom's_trick}
Let $p\in S^m_{1,\delta}(\mathbb{R}^n\times\mathbb{R}^{n+1};E,F)$. Then $p_\mu:=p\vert_{\zeta=\mu}\in S^m_{1,\delta}(\mathbb{R}^n\times\mathbb{R}^{n};E,F)$ and the associated operators are related as follows:
\begin{align}\label{eq:P_to_Pmu}
P(u\otimes e_\mu)=(P_\mu u)\otimes e_\mu.
\end{align}
\end{lem}
\begin{proof} For fixed $\mu$, $p_\mu$ is a symbol in view of the estimate: 
\begin{align*}
c\langle\xi\rangle\leq \langle\xi,\mu\rangle\leq C\langle\xi\rangle,\;\; \text{with}\;\; C=C(\mu).
\end{align*}
The following formal computation can be justified using oscillatory integrals.
\begin{align*}
[P(u\otimes e_\mu)](x,z)&=\int e^{ix\xi+iz\zeta} p(x,\xi,\zeta) [\mathcal{F}u](\xi)\delta(\zeta-\mu)\,d\zeta\dbar\xi\\
&=e^{iz\mu}\int e^{ix\xi} p(x,\xi,\mu) [\mathcal{F}u](\xi)\,\dbar\xi\\
&=[(P_\mu u)\otimes e_\mu](x,z).
\end{align*}
The above computation holds for each point, thus Equation \eqref{eq:P_to_Pmu} holds.
\end{proof}
Now, we verify that the function $u:=(Q_{\theta,\mu,+}+G_{\theta,\mu})f+K_{\theta,\mu}\phi$ solves Problem  \eqref{BVP:example_with_sepectral} for given $f$ and $\phi$:
\begin{align*}
[(A-\lambda)_+u]\otimes e_\mu&=A_{\theta,+}[u\otimes e_\mu]=A_{\theta,+}[((Q_{\theta,\mu,+}+G_{\theta,\mu})f+K_{\theta,\mu}\phi)\otimes e_\mu]\\
&=A_\theta(Q_{\theta,+}+G_\theta)(f\otimes e_\mu)+A_\theta K_\theta(\phi \otimes e_\mu)]\stackrel{\eqref{eq:example_inverse}}{=}f\otimes e_\mu.\\
[\gamma_0u]\otimes e_\mu&=\gamma_0(Q_\theta+G_\theta)(f\otimes e_\mu)+\gamma_0 K_\theta(\phi \otimes e_\mu)]\stackrel{\eqref{eq:example_inverse}}{=}\phi\otimes e_\mu.
\end{align*}
Therefore, the inverse of the parameter-dependent problem can be constructed for the inverse of the associated extended problem. For $\lambda=\mu^2e^{i\theta}$:
\begin{align*}
\begin{pmatrix}
(A-\lambda)_+\\
\gamma_0
\end{pmatrix}^{-1}=
\begin{pmatrix}
Q_{\theta,\mu,+}+G_{\theta,\mu} & K_{\theta,\mu}
\end{pmatrix}.
\end{align*}
What we are especially interested in is the left entry on the right hand side. Here, we observe:
\begin{align*}
(Q_{\theta,\mu,+}+ G_{\theta,\mu})L_p(\mathbb{R}^n_+)\subset \mathcal{D}(A_{\gamma_0}):=\{u\in L_p(\mathbb{R}^n_+): A_+u\in L_p(\mathbb{R}_+^n), \gamma_0u=0\}.
\end{align*}
Therefore, we obtain an explicit formula for the resolvent:
\begin{align*}
(A_{\gamma_0}-\lambda)^{-1}=Q_{\theta,\mu,+}+G_{\theta,\mu},\;\; \text{on the ray}\;\; \lambda=e^{i\theta}\mu^2.
\end{align*} 
The example encourages us to initially solve the extended problem:
\begin{align*}
(A+e^{i(\pi+\theta)}D^2_z)_+\tilde{u}=\tilde{f}\\
T\tilde{u}=\tilde{\phi}.
\end{align*}
In general, no explicit formulas for the inverse of the above problem exist.
We will therefore replace the inverse by a parametrix and analyze the resulting error term. 

According to Equation \eqref{eq:P_to_Pmu}, the restriction $\zeta=\mu$ in Lemma \ref{lem:Argom's_trick} commutes with composition. Therefore, for an elliptic symbol $p$ with parametrix $p^{-\#}$ and remainder $r$ we obtain:
\begin{align*}
P_\mu P^{-\#}_\mu= 1 +R_\mu.
\end{align*}
To estimate the error term, we need to analyze the dependence on the parameters $\theta,\mu$ and thus on $\lambda$ of the operators above. The dependence on $\theta$ for $0<\vartheta\leq |\theta|\leq\pi$ is not essential. In fact, we obtain uniform estimates on operator norms that only depend on $\vartheta$. However, the dependence on $\mu$ is essential and will be discussed next. 
\subsection*{The dependence on the spectral parameter $\mu$}
We consider general Boutet de Monvel symbols which have a covariable $\zeta$ with no space dependence, i.e. they are constant with respect to the variable $z$. By restriction $\zeta=\mu$, we obtain again Boutet de Monvel symbols. The norms of the associated operators depend on the parameter $\mu$.
\begin{thm} \label{thm:spectral_decay} Let $0\leq \delta<1$. 
	\begin{itemize}
		\item[(a)] Let $p\in S^{-m}_{1,\delta}(\mathbb{R}^{n}\times\mathbb{R}^{n+1})$ and $m\geq0$. Then
		\begin{align}
		\|P_\mu\|_{\mathcal{L}(L_p(\mathbb{R}^n))}\leq C|p|_*\langle\mu\rangle^{-m}.
		\end{align}
		\item[(b)] Let $g\in \mathcal{G}^{-m,0}_{1,\delta}(\mathbb{R}^{n-1}\times\mathbb{R}^{n})$ and $m>0$. Then
		\begin{align}
		\|G_\mu\|_{\mathcal{L}(L_p(\mathbb{R}_+^n))}\leq C|g|_*\langle\mu\rangle^{-m}. 
		\end{align}
		\item[(c)] Let $k\in\mathcal{K}_{1,\delta}^{-m}(\mathbb{R}^{n-1}\times\mathbb{R}^{n})$ and $m\geq 0$. Then
		\begin{align}
		\|K_\mu\|_{\mathcal{L}(B^{-1/p}_p(\mathbb{R}^{n-1});L_p(\mathbb{R}_+^n))}\leq C|k|_*\langle\mu\rangle^{-m}. 
		\end{align}
		\item[(d)] Let $t\in\mathcal{T}_{1,\delta}^{-m,0}(\mathbb{R}^{n-1}\times\mathbb{R}^{n})$ and $m\geq 1$. Then
		\begin{align}
		\|T_\mu\|_{\mathcal{L}(L_p(\mathbb{R}_+^n);B^{-1/p}_p(\mathbb{R}^{n-1}))}\leq C|t|_*\langle\mu\rangle^{-m+1}. 
		\end{align}
	\end{itemize}
	Here, $C$ denotes a suitable constant and $|p|_*$, $|g|_*$, $|k|_*$, $|t|_*$ suitable seminorms for $p$, $g$, $k$ and $t$, respectively. 
\end{thm}
Before we turn our attention to the proof, let us draw a conclusion form the above theorem which demonstrates its value.
\begin{cor} Let $m\geq m'\geq 0$. Let $b\in \mathcal{BM}^{m,d}_{1,\delta}(\mathbb{R}^n\times\mathbb{R}^{n+1})$ have a parametrix $b^{-\#}\in\mathcal{BM}^{-m',0}_{1,\delta}(\mathbb{R}^n\times\mathbb{R}^{n+1})$. Then $B_\mu$ is invertible for large $\mu$, and $\|B_\mu^{-1}-B_\mu^{-\#}\|_{\mathcal{L}(L_p(\mathbb{R}_+^n)\oplus B^{-1/p}_P(\mathbb{R}^{n-1}))}\leq C|b|_* \langle\mu\rangle^{-N}$ for all $N\in\mathbb{N}_0$. 
\end{cor}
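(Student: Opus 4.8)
\emph{Proof strategy.} The plan is to deduce the corollary from the two parametrix identities by a Neumann-series perturbation, using Theorem~\ref{theorem_spectral_decay} to render the remainders negligible as $\mu\to\infty$.

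First, by definition of a parametrix and the composition theorem for $\mathcal{BM}_{1,\delta}$ we have $b\#b^{-\#}=\mathbf{1}+r_1$ and $b^{-\#}\#b=\mathbf{1}+r_2$, where $\mathbf{1}$ is the identity boundary symbol and $r_1,r_2$ are regularizing, i.e.\ of order $-\infty$; since the calculus operations are continuous, suitable seminorms of $r_1,r_2$ are controlled by $|b|_*$. Substituting $\zeta=\mu$ and invoking Lemma~\ref{lem:Argom's_trick}, these become identities of operator-valued symbols in the remaining covariables, and, since $\#$ corresponds to operator composition, they pass to the operator identities
\[
B_\mu B_\mu^{-\#}=I+R_{1,\mu},\qquad B_\mu^{-\#}B_\mu=I+R_{2,\mu},
\]
where $R_{i,\mu}:=\op (r_i)_\mu$.

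Since each $r_i$ has order $-\infty$, parts (a)--(d) of Theorem~\ref{theorem_spectral_decay}, applied componentwise to the entries of $r_i$ with the order taken as negative as we wish, give $\|R_{i,\mu}\|\le C_N\,|b|_*\,\langle\mu\rangle^{-N}$ on $L_p(\mathbb{R}^n_+)\oplus B^{-1/p}_p(\mathbb{R}^{n-1})$ for every $N\in\mathbb{N}_0$. (If $b$ has class $d>0$, so that $B_\mu$ does not act on $L_p$, one runs the whole argument on $H^s_p(\mathbb{R}^n_+)\oplus B^{s-1/p}_p(\mathbb{R}^{n-1})$ with $s>d-1+1/p$ and restricts at the end; in the intended application to $TK_\lambda^D$, which has class $0$, this is unnecessary.) Choose $\mu_0$ with $\|R_{i,\mu}\|\le\tfrac{1}{2}$ for $|\mu|\ge\mu_0$. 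Then $I+R_{i,\mu}$ is invertible through its Neumann series with $\|(I+R_{i,\mu})^{-1}\|\le 2$, so $B_\mu^{-\#}(I+R_{1,\mu})^{-1}$ is a right inverse and $(I+R_{2,\mu})^{-1}B_\mu^{-\#}$ a left inverse of $B_\mu$; hence $B_\mu$ is invertible for $|\mu|\ge\mu_0$, with $B_\mu^{-1}=B_\mu^{-\#}(I+R_{1,\mu})^{-1}$.

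It remains to estimate the difference. From the last formula,
\[
B_\mu^{-1}-B_\mu^{-\#}=B_\mu^{-\#}\big((I+R_{1,\mu})^{-1}-I\big)=-B_\mu^{-1}R_{1,\mu},
\]
whence $\|B_\mu^{-1}-B_\mu^{-\#}\|\le\|B_\mu^{-1}\|\,\|R_{1,\mu}\|$. Since $b^{-\#}$ has order $-m'\le 0$, Theorem~\ref{theorem_spectral_decay} bounds $\|B_\mu^{-\#}\|$, and hence $\|B_\mu^{-1}\|\le 2\|B_\mu^{-\#}\|$, by a fixed power of $\langle\mu\rangle$; absorbing that power into the super-polynomial decay of $R_{1,\mu}$ yields $\|B_\mu^{-1}-B_\mu^{-\#}\|\le C_N\,|b|_*\,\langle\mu\rangle^{-N}$ for all $N$. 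The one genuinely delicate point is the use of Theorem~\ref{theorem_spectral_decay}: one must verify that a regularizing element of the extended calculus produces, via that theorem, operator-norm decay faster than any power of $\langle\mu\rangle$, with a constant governed by a single seminorm of $b$; the rest is the standard perturbation of an approximate inverse.
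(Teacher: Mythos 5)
Your proposal is correct and follows essentially the same route as the paper: apply Lemma~\ref{lem:Argom's_trick} to the parametrix identity, invoke Theorem~\ref{theorem_spectral_decay} to get rapid decay of the remainder, and set up the Neumann series to produce the inverse and estimate the error. The added care you take (tracking both one-sided inverses explicitly, noting the class-$d$ subtlety, and bounding $\|B_\mu^{-\#}\|$ before absorbing it into the rapid decay) is all implicit in the paper's shorter argument.
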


\begin{proof}
	By assumption $b\#b^{-\#}=1-r$ with $r\in \mathcal{BM}^{-\infty}_{1,\delta}(\mathbb{R}^n\times\mathbb{R}^{n+1})$.  
	As $B_\mu B^{-\#}_\mu=1-R_\mu$, Theorem \ref{thm:spectral_decay} implies that
	$\|R_\mu\|\leq C\langle\mu\rangle^{-N}$ 
	for all $N\in\mathbb{N}_0$. 
	For large $\mu$, the inverse of 
	$1-R_\mu$ is given by a Neumann series. Therefore, $B_\mu$ has a right inverse for large $\mu$:
	$$B_\mu^{-1}=B^{-\#}_\mu+B^{-\#}_\mu\sum_{j\in\mathbb{N}} R_\mu^j.$$ Clearly the second summand is rapidly decreasing in $\mu$. Similarly we obtain a left inverse.
\end{proof}
For the proof of Theorem \ref{thm:spectral_decay} we need the following observation.
Since there is no dependence on the space variable $z$ we can interpret a pseudodifferential operator $P$ with symbol in $S^0_{1,\delta}(\mathbb{R}^{n}\times\mathbb{R}^{n+1})$ as a pseudodifferential operator on the cylinder $\mathbb{R}^n\times\mathbb{S}_L$, where $\mathbb{S}_L$ is the circle with radius $L/2\pi$. Then we obtain:
\begin{lem}\label{lem_uniform_estimates} If $p\in S^0_{1,\delta}(\mathbb{R}^{n}\times\mathbb{R}^{n+1})$, then for all $L>0$ we have
\begin{align*}
P:=\op(p)\in \mathcal{L}(L_p(\mathbb{R}^n\times\mathbb{S}_L))\;\;\text{and}\;\; \|P\|_{\mathcal{L}(L_p(\mathbb{R}^n\times\mathbb{S}_L))}\leq C|p|_*.
\end{align*}
Here $C$ is a constant independent of $L$.
\end{lem}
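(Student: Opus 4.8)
The plan is to reduce the statement to a standard $L_p$-boundedness theorem for pseudodifferential operators of type $(1,\delta)$ on $\mathbb R^n$, applied with the last covariable restricted to the integer lattice, and to keep careful track of the fact that the constant produced by that theorem depends only on finitely many of the symbol seminorms of $p$. First I would Fourier-transform in the $\mathbb S_T$-direction: identifying $L_p(\mathbb R^n\times\mathbb S_T)$ with $\ell^p$-valued functions via the Fourier series in the last variable is awkward for $p\neq 2$, so instead I would view $P=\op(p)$ as acting on $T$-periodic functions on $\mathbb R^{n+1}$ and realize it as a periodic pseudodifferential operator whose symbol, for the $(n+1)$st covariable, is sampled at the dual lattice $\frac{2\pi}{T}\mathbb Z$. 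The cleanest route is the transference/comparison principle: an operator on $L_p(\mathbb S_T)$ with symbol $p(x,\xi,\tfrac{2\pi}{T}k)$, $k\in\mathbb Z$, is dominated in norm by the $L_p(\mathbb R)$-operator with symbol $p(x,\xi,\tau)$, $\tau\in\mathbb R$, with a constant independent of $T$; this is the classical de Leeuw-type transference for Fourier multipliers, extended to the variable-coefficient setting by writing $p$ as an oscillatory integral in $x$ and applying the multiplier transference uniformly in the remaining parameters.

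Concretely, the key steps in order are: (i) Since $p$ is independent of the space variable $z\in\mathbb R$ (the cylinder direction before periodization), write $P$ as a pseudodifferential operator on $\mathbb R^{n+1}$ with symbol $p(x,\xi,\tau)\in S^0_{1,\delta}(\mathbb R^{n+1}\times\mathbb R^{n+1})$ that happens to be $\tau$-independent of the $z$-variable, so that $P$ genuinely commutes with translations in $z$. (ii) Invoke the $L_p$-boundedness theorem for $S^0_{1,\delta}$ operators, $0\le\delta<1$ (this is standard, e.g.\ via the Calderón–Vaillancourt/Fefferman–Stein machinery), to get $\|P\|_{\mathcal L(L_p(\mathbb R^{n+1}))}\le C\,|p|_*$ with $|p|_*$ a fixed finite symbol seminorm and $C$ depending only on $n,p,\delta$. (iii) Use the fact that $P$ commutes with the $z$-translation group and apply a transference argument to pass from $L_p(\mathbb R^{n+1}) = L_p(\mathbb R^n_x\times\mathbb R_z)$ to $L_p(\mathbb R^n_x\times\mathbb S_T)$: periodizing a convolution-type operator in $z$ does not increase the $L_p$-norm, and the periodized operator is exactly $\op(p)$ on the cylinder with the covariable sampled on $\frac{2\pi}{T}\mathbb Z$. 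The $T$-independence of $C$ is built in because the transference constant for periodization is $1$ and the seminorm $|p|_*$ is computed on $\mathbb R^{n+1}$, not on anything that sees $T$.

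The main obstacle I expect is step (iii) in the variable-coefficient case: transference principles are cleanest for genuine Fourier multipliers, whereas here $p$ depends on $x$ (though not on $z$). The fix is to freeze the argument at the level of the Schwartz kernel in the $z$-variable: because $p$ is $z$-independent, the kernel of $P$ factors as $P = \int e^{iz\tau}\,\widehat P(\tau)\,d\tau$ (loosely), where $\widehat P(\tau)=\op_x(p(\cdot,\cdot,\tau))$ is a family of $x$-pseudodifferential operators, and the cylinder operator is $\sum_{k} e^{i\frac{2\pi}{T}kz}\,\widehat P(\tfrac{2\pi}{T}k)$. Bounding this requires a vector-valued/operator-valued Marcinkiewicz–Mikhlin multiplier theorem with the multiplier taking values in $\mathcal L(L_p(\mathbb R^n))$ — but the $\mathcal L(L_p)$-valued symbol $\tau\mapsto\widehat P(\tau)$ and all its $\tau$-derivatives are bounded precisely by the $S^0_{1,\delta}$-seminorms of $p$ (this is immediate since $\partial_\tau^j p\in S^0_{1,\delta}$), so the R-boundedness/UMD machinery applies with constants controlled by $|p|_*$ and independent of $T$. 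Alternatively, one avoids the operator-valued multiplier theorem entirely by the following softer argument: embed $\mathbb S_T$ isometrically (up to the factor $T^{1/p}$) into a large torus approximating $\mathbb R$, use the $S^0_{1,\delta}$ bound on $\mathbb R^{n+1}$ directly for compactly supported functions, and pass to the limit — the periodization of the kernel in $z$ being an $L_1$-contraction makes the $L_p$-estimate survive with the same constant. Either way, the conclusion $\|P\|_{\mathcal L(L_p(\mathbb R^n\times\mathbb S_T))}\le C|p|_*$ with $C$ independent of $T$ follows.
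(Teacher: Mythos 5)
Your transference plan is essentially sound and genuinely different from what the paper does. The paper proceeds elementarily: it verifies directly that $P$ preserves $T$-periodicity, decomposes a $T$-periodic function as $u=\sum_j u_j$ with $u_j$ supported in $[-T/2+jT,\,T/2+jT]$, estimates the Schwartz kernel of $P$ by $|(x-y,z-w)|^{-l}$ using that $p$ has order zero, reads off a decay of order $T^{-1}(|j|-1)^{-2}$ for the contribution of $u_j$ to the central period via Schur's test, sums in $j$, and then handles $T<1$ by regarding a $T$-periodic function as $NT$-periodic and using $\|u\|_{L_p(\mathbb S_{NT})}=N^{-1/p}\|u\|_{L_p(\mathbb S_T)}$. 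You instead recognize $P$ on the cylinder as the operator-valued Fourier multiplier $k\mapsto \widehat P(2\pi k/T)$ on $L_p(\mathbb S_T;L_p(\mathbb R^n))$, where $\widehat P(\tau)=\op_x(p(\cdot,\cdot,\tau))$, and would invoke de Leeuw's restriction theorem to dominate its norm by that of the $L_p(\mathbb R;L_p(\mathbb R^n))$-multiplier $\tau\mapsto\widehat P(\tau)$ -- which is just $\op(p)$ on $L_p(\mathbb R^{n+1})$ -- with the $T$-dependence washing out by dilation invariance of the continuous multiplier norm. Once one accepts the operator-valued de Leeuw restriction (which holds for any Banach target and needs only strong continuity of the multiplier, not UMD or R-boundedness), this is cleaner conceptually; the paper's route is more self-contained and avoids multiplier machinery entirely at the cost of explicit kernel estimates.

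Two cautions. First, you drift into operator-valued Marcinkiewicz--Mikhlin and R-boundedness, but that machinery would be used to \emph{prove} an $L_p(\mathbb S_T)$-multiplier bound from derivative estimates on the symbol, not to \emph{transfer} an already-known $L_p(\mathbb R)$-bound; de Leeuw restriction is the right tool here and needs no UMD hypothesis, so the R-boundedness detour is a red herring. Second, your ``softer'' alternative -- claiming the periodized $z$-kernel is an ``$L_1$-contraction'' so the $L_p$-estimate survives -- has a real gap: the $z$-kernel of $P$ is singular at $z=0$ and not integrable, so you cannot periodize it in $L_1$ and invoke Young. The honest version of a kernel-based argument must split off a neighbourhood of the diagonal and control the far-off-diagonal periods separately, which is exactly the Schur-test computation the paper carries out; the de Leeuw route avoids this only because it works spectrally, approximating trigonometric polynomials rather than touching the kernel.
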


\begin{proof} We first note that $P$ preserves $L$-periodicity:
\begin{align*}
[Pu](x,z+kL):&=\int e^{i(x-y)\xi+i((z+kL)-w)\zeta}p(x,\xi,\zeta)u(y,w)\,dydw\dbar\xi\dbar\zeta\\
&=\int e^{i(x-y)\xi+i(z-(w-kL))\zeta}p(x,\xi,\zeta)u(y,w)\,dydw\dbar\xi\dbar\zeta\\
&=\int e^{i(x-y)\xi+i(z-\tilde{w})\zeta}p(x,\xi,\zeta)u(y,\tilde{w})\,dyd\tilde{w}\dbar\xi\dbar\zeta\\
&=[Pu](x,z), \quad u\in C^\infty_c(\mathbb R^n\times \mathbb S_L).
\end{align*}
We identify $u\in L_p(\mathbb{R}^n\times\mathbb{S}_L)$ with an $L$-periodic function by letting 
\begin{align*}
u=\sum_{j\in\mathbb{Z}}u_j\;\;\text{with}\;\;u_j(x,z):=u\vert_{\mathbb{R}^n\times[-L/2,L/2]}(x,z-Lj).
\end{align*}
Note that for every $j\in\mathbb{Z}$ we have $u_j\in L_p(\mathbb{R}^n\times\mathbb{R})$ and $\|u_j\|_{L_p(\mathbb{R}^n\times\mathbb{R})}=\|u\|_{L_p(\mathbb{R}^n\times \mathbb{S}_L)}$. 
The integral kernel $k= k(x,z,y,w)$ of the pseudodifferential operator $P$ is given by
\begin{align*}
k(x,z,y,w)=\iint e^{i(x-y)\xi+i(z-w)\zeta}p(x,\xi,\zeta)\,\dbar\xi \dbar \zeta.
\end{align*}
Since $p$ is of order zero, we obtain the estimate
\begin{align*}
|k(x,z,y,w)|\leq C|p|_*(|x-y|^2+|z-w|^2)^{-l/2}
\end{align*}
for all even $l\in\mathbb{N}$  with $l>n$ with a suitable seminorm $|p|_*$ for $p$. 
For $|j|\geq2$, $z\in [-L/2,L/2]$ and $w\in \supp u_j$ we have $|z-w|\geq(j-1)L$, hence
\begin{eqnarray*}
\lefteqn{|k(x,z,y,w)|
\leq C|p|_* (|x-y|^2+ (|j|-1)^2L^2)^{-(n+2)/2}}\\
&\le&C|p|_* ((|j|-1)L)^{-(n+2)}\langle|x-y|/(|j|-1)L\rangle^{-(n+2)}.
\end{eqnarray*}
We write $\chi_j$ for the indicator function of $[-L/2+jL,L/2+jL]$. A quick computation shows that
\begin{align*}
\int \chi_0(z)|k(x,z,y,w)|\chi_{j}(w)\,dwdy&\leq C|p|_*L^{-1}(|j|-1)^{-2}\;\;\text{and}\\
\int \chi_0(z)|k(x,z,y,w)|\chi_{j}(w)\,dzdx&\leq C|p|_*L^{-1}(|j|-1)^{-2}.
\end{align*}
Hence we get $L_p$-estimates by Schur's test. More explicitly:
\begin{eqnarray*}\lefteqn{
\|Pu_{j}\|_{L_p(\mathbb{R}^n\times \mathbb S_L)}=\|\chi_0P\chi_{j}u_{j}\|_{L_p(\mathbb{R}^n\times \mathbb{R})}}\\
&\leq& C|p|_* L^{-1}(|j|-1)^{-2}\|u_{j}\|_{_{L_p(\mathbb{R}^n\times \mathbb{R})}}
=C|p|_* L^{-1}(|j|-1)^{-2}\|u\|_{_{L_p(\mathbb{R}^n\times \mathcal{S}_L)}}
\end{eqnarray*}
In particular the right hand side is summable, and for $L\geq1$ we obtain
\begin{eqnarray*}
\lefteqn{\|Pu\|_{L_p(\mathbb{R}^n\times\mathbb{S}_L)}}\\
&=&\sum_{j\in\{-1,0,1\}}\|Pu_j\|_{L_p(\mathbb{R}^n\times \mathbb S_L)}
+\sum_{|j|\ge2} \|Pu_{j}\|_{L_p(\mathbb{R}^n\times \mathbb S_L)}\\
&\leq& C\Big(3|p|_*\|u\|_{L_p(\mathbb{R}^n\times\mathbb{S}_L)} +2\sum_{j\in\mathbb{N}} j^{-2}|p|_*\|u\|_{L_p(\mathbb{R}^n\times\mathbb{S}_L)}\Big)\\
&\leq& C|p|_*\|u\|_{L_p(\mathbb{R}^n\times\mathbb{S}_L)}
\end{eqnarray*}
We still need to prove that the bound also holds for $L<1$. 
Choose $N\in \mathbb N$ so large that $NL\ge1$, and consider an $L$-periodic function as an
$NL$-periodic function. We have $\|u\|_{L_p(\mathbb{R}^n\times\mathbb{S}_{NL})}
= N^{1/p}\|u\|_{L_p(\mathbb{R}^n\times\mathbb{S}_{L})}$  and hence, by the above
argument,
\begin{eqnarray*}
\lefteqn{\|Pu\|_{L_p(\mathbb{R}^n\times\mathbb{S}_{L})} 
=
N^{-1/p}\|Pu\|_{L_p(\mathbb{R}^n\times\mathbb{S}_{NL})} 
}\\
&\le& C |p|_* N^{-1/p}\|u\|_{L_p(\mathbb{R}^n\times\mathbb{S}_{NL})}
= C|p|_* \|u\|_{L_p(\mathbb{R}^n\times\mathbb{S}_{L})} 
\end{eqnarray*}
 for a constant $C$ independent of $NL$.
\end{proof}

\begin{proof}[Proof of Theorem {\rm\ref{thm:spectral_decay}}]
Let us first assume that $p\in S^0_{1,\delta}(\mathbb{R}^n\times\mathbb{R}^{n+1})$.
We write $e_\mu$ for the $2\pi/\mu$-periodic function $[x\mapsto e^{i\mu x}]$. 
For $u\in L_p(\mathbb R^n)$ we take the $L_p$-norm of both sides of Equation \eqref{eq:P_to_Pmu}:
\begin{eqnarray*}\lefteqn{\|P (u\otimes e_\mu)\|_{L_p(\mathbb{R}^n\times\mathbb{S}_{2\pi/\mu})}}\\
&=&\|[P_\mu u]\otimes e_\mu\|_{L_p(\mathbb{R}^n\times\mathbb{S}_{2\pi/\mu})}=
\|P_\mu u\|_{L_p(\mathbb{R}^n)}\|e_\mu\|_{L_p(\mathbb{S}_{2\pi/\mu})}.
\end{eqnarray*}
Since $P$ is of order zero,  Lemma \ref{lem_uniform_estimates} yields 
\begin{eqnarray*}\lefteqn{\|P_\mu u\|_{L_p(\mathbb{R}^n)}\|e_\mu\|_{L_p(\mathbb{S}_{2\pi/\mu})}}\\
&\leq& C|p|_*\|u\otimes e_\mu\|_{L_p(\mathbb{R}^n\times\mathbb{S}_{2\pi/\mu})}=C|p|_*\| u\|_{L_p(\mathbb{R}^n)}\|e_\mu\|_{L_p(\mathbb{S}_{2\pi/\mu})},
\end{eqnarray*}
and part (a) follows for $m=0$. 
For  $m<0$ we can use what we did so far to reduce to the case 
$p(x,\xi,\mu)=\langle\xi,\mu\rangle^{-m}$. 
But for this symbol the statement is a consequence of the $L_p$-mapping property of 
pseudodifferential operators and the following simple estimates.
\begin{align*}
|D^\alpha_\xi \langle \xi,\mu\rangle^{-m}| 
\leq C_\alpha \langle \xi,\mu\rangle^{-m-|\alpha|}\leq C_\alpha \langle \mu\rangle^{-m}\langle \xi\rangle^{-|\alpha|}.
\end{align*}
Now for part (b). We recall that $\tilde{g}\in \widetilde{\mathcal G}^{m,0}_{1,\delta}(\mathbb{R}^{n-1}\times\mathbb{R}^n)$ satisfies the estimates
\begin{align*}
\|[D_{x_n}^lx_n^{l'}D^{l''}_{y_n}y_n^{l'''}D^\alpha_{\xi'}D_{x'}^\beta\tilde{g}_\mu](x',\xi',x_n,\cdot)\|_{L_1(\mathbb{R}_+)}\leq C|g|_* \langle\xi',\mu\rangle^{m-|\alpha|+\delta|\beta|+l-l'+l''-l'''}\\
\|[D_{x_n}^lx_n^{l'}D^{l''}_{y_n}y_n^{l'''}D^\alpha_{\xi'}D_{x'}^\beta\tilde{g}_\mu](x',\xi',\cdot,y_n)\|_{L_1(\mathbb{R}_+)}\leq C|g|_* \langle\xi',\mu\rangle^{m-|\alpha|+\delta|\beta|+l-l'+l''-l'''}.
\end{align*}
So Schur's test implies that $\|D^\alpha_{\xi'}\op_n\tilde{g}_\mu\|_{\mathcal{L}(L_p(\mathbb{R}_+))}\leq C |g|_*\langle\xi',\mu\rangle^{m-|\alpha|}$. We are interested in the integral kernel
\begin{align*}
K(x',y',\mu)=\int e^{i(x'-y')\xi'}\op_n\tilde{g}_\mu(x',\xi')\,\dbar\xi'=\int L^N\left(e^{i(x'-y')\xi'}-1\right)\op_n\tilde{g}_\mu(x',\xi')\,\dbar\xi'.
\end{align*}
with $N\in\mathbb{N}$ and $L:=\sum_{|\alpha|=1} \frac{(x'-y')^\alpha}{|x'-y'|^2}D_{\xi'}^\alpha$.
We take $N=n-1$ and use the fact that $|e^{it}-1|\le 2|t|^\theta$ for $0<\theta< \min(1,|m|)$, to get
\begin{eqnarray*}\lefteqn{\|K(x',y',\mu)\|_{\mathcal{L}(L_p(\mathbb{R}_+))}}\\
&\leq C|g|_*|x'-y'|^{-n+1+\theta}\int |\xi'|^\theta\langle\xi',\mu\rangle^{-m-n+1}\dbar\xi'
\leq C|g|_*|x'-y'|^{-n+1+\theta}\langle\mu\rangle^{-m+\theta}.
\end{eqnarray*}
Choosing $N=n$ we obtain $\|K(x',y',\mu)\|_{\mathcal{L}(L_p(\mathbb{R}_+))}\leq C |g|_*|x'-y'|^{-n}\langle\mu\rangle^{-m-1}$. 
The first estimate for $\langle\mu\rangle|x'-y'|\leq 1$ and the second for $\langle\mu\rangle|x'-y'|> 1$ imply
\begin{align*}
&\|K(x',\cdot,\mu)\|_{L_1(\mathbb{R}^{n-1};\mathcal{L}(L_p(\mathbb{R}_+)))}\leq C|g|_* \langle\mu\rangle^{-m}\;\;\text{and}\\
&\|K(\cdot,y',\mu)\|_{L_1(\mathbb{R}^{n-1};\mathcal{L}(L_p(\mathbb{R}_+)))}\leq C |g|_*\langle\mu\rangle^{-m}.
\end{align*}
In fact, this follows from the the identities  
\begin{eqnarray*}\lefteqn{\int _{\langle\mu\rangle|x'-y'|\leq 1}|x'-y'|^{-n+1+\theta}\langle\mu\rangle ^\theta dx'}\\
&=&\int _{\langle\mu\rangle|x'-y'|\leq 1}(\langle\mu\rangle|x'-y'|)^{-n+1+\theta}\langle\mu\rangle^{n-1} \,dx'
=\int_{|w|\leq 1}  |w|^{-n+1+\theta}\,dw<\infty\\
\lefteqn{\text{and}}\\
\lefteqn{\int _{\langle\mu\rangle|x'-y'|\geq 1}|x'-y'|^{-n}\langle\mu\rangle^{-1} \,dx'
=\int_{|w|\geq 1}  |w|^{-n}\,dw<\infty.}
\end{eqnarray*}
Hence the assertion follows with Schur's test.
\\ For part (c): We recall the well-known fact that every potential operator $K$ can be written as $r^+P\tilde\gamma_0^*$, where $P$ is a pseudodifferential operator of order $-m-1$ whose symbol-kernel is given by $\tilde{p}=E\tilde{k}$;  $E$ is Seeley's extension operator applied to $x_n$, and  $\tilde\gamma_0^*$ is the adjoint to the evaluation $\tilde\gamma_0: H^s_p(\mathbb R^n)\to B_p^{s-1/p}(\mathbb R^{n-1})$, $s>1/p$. It is clear that $K_\mu=r^+P_\mu\tilde\gamma_0^*$. The map
\begin{align*}
S_{1,0}^{-1}(\mathbb{R}^n\times\mathbb{R}^{n+1})\ni \langle\xi,\zeta\rangle^{-1}\mapsto \langle\xi,\mu\rangle^{-1}\in S_{1,0}^{-1}(\mathbb{R}^n\times\mathbb{R}^{n})
\end{align*}
is uniformly bounded with respect to $\mu$. In view of the continuity of $\tilde\gamma_0^*$ from $B_p^{-1/p}(\mathbb{R}^{n-1})$ to $H^{-1}_p(\mathbb{R}^n)$ we have 
\begin{align*}
\|\op(\langle \xi,\mu\rangle^{-1})\tilde\gamma_0^*\|_{\mathcal{L}(B_p^{-1/p}(\mathbb{R}^{n-1}),L_p(\mathbb{R}^n))}\leq C.
\end{align*}
Define $q=p\#\langle \xi,\zeta\rangle^{1}\in S^{-m}_{1,\delta}(\mathbb{R}^n\times\mathbb{R}^{n+1})$. By part (a)
\begin{align*}
\|Q_\mu\|_{\mathcal{L}(L_p(\mathbb{R}^n))}\leq C|q|_*\langle\mu\rangle^{-m}\leq C|k|_*\langle\mu\rangle^{-m}.
\end{align*}
The estimate for $K_\mu$ follows.\\
For part (d) we use a similar aproch. We write $T=\gamma_0 Pe^+$, where $P$ is a pseudodifferential operator of order $m$ with symbol-kernel $\tilde{p}=E\tilde{t}$. Clearly $T_\mu=\gamma_0 P_\mu e^+$. By the same argument as in part (c) we have
\begin{align*}
\|\gamma_0 \op(\langle \xi,\mu\rangle^{-1})\|_{\mathcal{L}(L_p(\mathbb{R}^n);B_p^{-1/p}(\mathbb{R}^{n-1}))}\leq C.
\end{align*}
Define $q=\langle \xi,\zeta\rangle^{1}\#p\in S^{-m+1}_{1,\delta}(\mathbb{R}^n\times\mathbb{R}^{n+1})$. By part (a)
\begin{align*}
\|Q_\mu\|_{\mathcal{L}(L_p(\mathbb{R}^n))}\leq C|q|_*\langle\mu\rangle^{-m}\leq C|k|_*\langle\mu\rangle^{-m+1}.
\end{align*}
The estimate for $T_\mu$ follows.
\end{proof}

\subsection*{The principal symbol of the degenerate singular Green operator}
We will now apply Agmon's trick to our problem. We introduce the operator
$A_\theta:=A+e^{i\theta}D^2_z$ acting on $\mathbb R^n_+\times \mathbb R$. 
The symbol of  $A_\theta$ is $a_\theta(x,\xi,\zeta) = a(x,\xi)+e^{i\theta}\zeta^2
\in S^2_{1,0}(\mathbb R^n\times \mathbb R^{n+1})$,
where $a(x,\xi) $ is the symbol of $A$.  
Assuming that $a$ is homogeneous of degree 2, there exists a constant $c=c(M,\vartheta)$ such that for all $0<\vartheta\leq|\theta|\leq \pi$ the estimate $|a_\theta(x,\xi,\zeta)|\geq c|\xi,\zeta|^2$ holds. In particular, $A_\theta$ is elliptic. 
After possibly replacing $A$  by $A-c$ for some positive constant $c$ we may and will assume that the Dirichlet problem for $A_\theta$ is invertible. 
In the introduction we already pointed out that the solution operator to the Dirichlet problem is an operator in the Boutet de Monvel calculus, i.e.
\begin{align}
\label{eq:solution_operator_dirichlet_problem}
\begin{pmatrix}
(A_\theta)_+\\
\gamma_0
\end{pmatrix}^{-1}=\begin{pmatrix}
Q_{\theta,+}+G_\theta^D & K_\theta^D
\end{pmatrix}.
\end{align}
We will need the principal symbols of the operators $G_\theta^D$ and $K_\theta^D$
and collect the results to fix some notation.

\begin{rem}\label{rem:dirichlet_problem} 
\begin{itemize}
\item[(a)] For fixed $(x',\xi')$, the restriction to the boundary of the principal symbol of $A_\theta$ is  a polynomial of degree two in $\xi_n$. It therefore 
has two roots, say $\pm i\kappa_\theta^\pm(x',\xi',\zeta)$, with $\re \kappa_\theta^\pm\geq0$.
\item[(b)] We have $\kappa_\theta^\pm\in S^1_{1,0}(\mathbb{R}^{n-1}\times\mathbb{R}^{n})$. 
Both are strongly elliptic, i.e. $\re \kappa_\theta^\pm \geq \omega|\xi',\zeta|$ for 
suitable $\omega>0$. 
\item[(c)] The principal symbol of 
$K_\theta^D\in \mathcal{K}^{0}_{1,0}(\mathbb{R}^{n-1}\times\mathbb{R}^n)$ is $(\kappa_\theta^++i\xi_n)^{-1}$.
\item[(d)] The principal symbol of $G_\theta^D\in\mathcal{G}^{-2,0}_{1,0}(\mathbb{R}^{n-1}\times\mathbb{R}^{n})$ is \\$a_{nn}^{-1}(\kappa^+_\theta+\kappa^-_\theta)^{-1}(\kappa_\theta^++i\xi_n)^{-1}(\kappa_\theta^--i\eta_n)^{-1}$.
\end{itemize}

For details see \cite[Section 2]{GrubbSchrohe2001}
\end{rem}
 For large  $-\lambda=e^{i\theta}\mu^2$ define
\begin{align}
G_\theta^T:=-K_\theta^D(TK_\theta^D)^{-\#} T((A_\theta^{-1})_++G^D_\theta).
\end{align} 
The operator $G_\lambda^T$ defined in \eqref{eq:def_GT} coincides with  $G^T_{\theta,\mu}\mod \mathcal{O}(\langle\lambda\rangle^{-N})$ for all $N\in\mathbb{N}$, as operators in $L_p(\mathbb{R}^n_+)$.
Moreover, let $G^{T,*}_\theta$ be any operator with the same principal symbol as $G^T_\theta$. Then according to Theorem \ref{thm:spectral_decay}, $G^T_\lambda=G^{T,*}_{\theta,\mu}\mod o(\langle\lambda\rangle^{-1})$, as operators on $L_p(\mathbb{R}^n_+)$.  
\begin{lem}
\label{lem:symbol_G}
The operator $G^T_{\theta}$ is a singular Green operator with 
symbol  
$g^T_\theta\in \mathcal{G}^{-2,0}_{1,1/2}(\mathbb{R}^{n-1}\times\mathbb{R}^n)$ and principal symbol 
\begin{align*}
g^T_{\theta(-2)}(x',\xi',\zeta;\xi_n,\eta_n)=s^T_\theta(x',\xi',\zeta)(\kappa_\theta^+(x',\xi',\zeta)+i\xi_n)^{-1}(\kappa_\theta^-(x',\xi',\zeta)-i\eta_n)^{-1}
\end{align*}
for suitable $s^{T}_\theta\in S^{-1}_{1,1/2}\left(\mathbb{R}^{n-1}\times\mathbb{R}^{n}\right)$.
The corresponding symbol-kernel is 
\begin{align*}
\tilde g^T_{\theta(-2)}(x',\xi',\zeta;x_n,y_n)=s^T_\theta(x',\xi',\zeta)e^{-\kappa_\theta^+(x',\xi',\zeta)x_n}e^{-\kappa_\theta^-(x',\xi',\zeta)y_n}.
\end{align*}
\end{lem}

\begin{proof} 
Modulo smoothing operators  $G_\theta^T$ is the composition of the potential operator 
$K_\theta^D$, a parametrix 
$S_\theta^{-\#}$ to the pseudodifferential operator $S_\theta:=TK^D_\theta$ on the boundary, 
multiplication by the function $\varphi_1$  introduced in \eqref{eq:T}
and the trace operator 
$\gamma_1 (Q_{\theta,+}+G^D_\theta)$. 
Note that $Q_{\theta,+}+G^D_\theta$ maps into the kernel of $\gamma_0$ 
so that there is no contribution from $\varphi_0\gamma_0$. 
Hence the principal symbol of $G_\theta^T$ is given by multiplication of the principal symbols of these operators. For the proof of the lemma it is therefore sufficient to combine the following three statements.
\begin{itemize}
\item[(i)] $K^D_\theta = \op k_\theta$ with  $k_\theta\in \mathcal{K}^{0}_{1,0}(\mathbb{R}^{n-1}\times\mathbb{R}^n)$ and  principal symbol
\begin{align*}
k_{\theta(0)}(x',\xi',\zeta,\xi_n)=(\kappa^+_\theta(x',\xi',\zeta)+i\xi_n)^{-1},
\end{align*}
which is Remark \ref{rem:dirichlet_problem}(c).
\item[(ii)]
The symbol $s^{-\#}_\theta\#\varphi_1$ of $S^{-\#}_\theta \varphi_1$ is an element of $S^{-1}_{1,1/2}(\mathbb{R}^{n-1}\times\mathbb{R}^n)$. This is the content of Lemma \ref{lem_S}, below.
\item[(iii)] $\gamma_1 (Q_{\theta,+}+G^D_\theta)=\op t_\theta$ with $t_\theta\in \mathcal{T}^{-1,0}_{1,0}$ and principal symbol
\begin{align*}
t_{\theta(-1)}(x',\xi',\zeta,\xi_n)=-a_n(x')^{-1}(\kappa_\theta^-(x',\xi',\zeta)-i\xi_n)^{-1},
\end{align*}
which follows from Remark \ref{rem:dirichlet_problem} and the composition rules.
\end{itemize}
\end{proof}

\subsection*{The parametrix on the boundary}
We recall a sufficient condition for the existence of a parametrix.

\begin{thm}[Parametrix]
\label{thm_parametrix}
Let $m\geq0$ and $p\in S_{1,0}^m(\mathbb{R}^n\times\mathbb{R}^n)$. Suppose there exists a $0\leq \delta<1$, such that for sufficiently large $|\xi|$ we have the estimates
\begin{align}
\label{eq:hypo_1}
&|p(x,\xi)|\geq c \;\text{ and }
\\
\label{eq:hypo_2}
&|\partial^\beta_x\partial_\xi^\alpha p(x,\xi)p(x,\xi)^{-1}|\leq C\langle \xi\rangle^{-|\alpha|+\delta|\beta|}\;\;\text{for all}\;\;\alpha,\beta\in\mathbb{N}^n_0.
\end{align}
Then there exists a parametrix $p^{-\#}\in S^{0}_{1,\delta}(\mathbb{R}^n\times\mathbb{R}^n)$, i.e.,
\begin{align*}
p^{-\#}\#p=1+ r_1\;\text{and}\;p\#p^{-\#}=1+r_2,
\end{align*}
with $r_1,r_2\in S^{-\infty}(\mathbb{R}^n\times\mathbb{R}^n)$.
\end{thm}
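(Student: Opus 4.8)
The plan is the standard Neumann-series parametrix construction, carried out in H\"ormander's symbol calculus for the classes $S^m_{1,\delta}$ (legitimate because $\delta<1=\rho$), the only genuinely new point being that the hypotheses \eqref{eq:hypo_1}--\eqref{eq:hypo_2} are exactly what is needed to place the reciprocal $1/p$ in $S^0_{1,\delta}$. First I fix a cut-off $\chi\in C^\infty(\mathbb R^n\times\mathbb R^n)$ with $\chi(x,\xi)=0$ for $|\xi|\le R$ and $\chi(x,\xi)=1$ for $|\xi|\ge 2R$, where $R$ is chosen so large that \eqref{eq:hypo_1} and \eqref{eq:hypo_2} hold on $\{|\xi|\ge R\}$, and set $q_0:=\chi/p\in C^\infty(\mathbb R^n\times\mathbb R^n)$. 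I claim $q_0\in S^0_{1,\delta}$. For $|\alpha|=|\beta|=0$ this is immediate from $|q_0|\le |\chi|/c$. For the induction step, differentiating $p\,q_0=\chi$ by the Leibniz rule and solving for the top-order term gives, on $\{|\xi|\ge R\}$,
\begin{equation*}
\partial_\xi^\alpha\partial_x^\beta q_0
= p^{-1}\Big(\partial_\xi^\alpha\partial_x^\beta\chi
-\sum_{\substack{\alpha'\le\alpha,\ \beta'\le\beta\\ (\alpha',\beta')\ne(\alpha,\beta)}}
\binom{\alpha}{\alpha'}\binom{\beta}{\beta'}\,
\big(\partial_\xi^{\alpha-\alpha'}\partial_x^{\beta-\beta'}p\big)\,
\big(\partial_\xi^{\alpha'}\partial_x^{\beta'}q_0\big)\Big).
\end{equation*}
Using $|(\partial_\xi^{\alpha-\alpha'}\partial_x^{\beta-\beta'}p)\,p^{-1}|\le C\langle\xi\rangle^{-|\alpha-\alpha'|+\delta|\beta-\beta'|}$ from \eqref{eq:hypo_2}, the inductive bound $|\partial_\xi^{\alpha'}\partial_x^{\beta'}q_0|\le C\langle\xi\rangle^{-|\alpha'|+\delta|\beta'|}$, and $(\partial_\xi^\alpha\partial_x^\beta\chi)\,p^{-1}\in S^{-\infty}$, each summand is $O(\langle\xi\rangle^{-|\alpha|+\delta|\beta|})$, which closes the induction.

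Next I pass to the operator level. Since $\rho-\delta=1-\delta>0$, the Leibniz product admits the asymptotic expansion $q_0\#p\sim\sum_\alpha\frac{1}{\alpha!}\,(\partial_\xi^\alpha q_0)(D_x^\alpha p)$. The term $\alpha=0$ equals $q_0 p=\chi$, and $\chi-1$ is supported in $\{|\xi|\le 2R\}$, hence lies in $S^{-\infty}$; every term with $\alpha\ne0$ lies in $S^{-(1-\delta)|\alpha|}_{1,\delta}\subset S^{-(1-\delta)}_{1,\delta}$. Therefore $q_0\#p=1-r$ with $r\in S^{-(1-\delta)}_{1,\delta}$. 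Because $1-\delta>0$, the orders of the iterated products $r^{\#j}$ tend to $-\infty$, so the series $\sum_{j\ge0}r^{\#j}$ can be asymptotically summed to a symbol $e\in S^0_{1,\delta}$ with $e\#(1-r)=1$ modulo $S^{-\infty}$; setting $p^{-\#}_{L}:=e\#q_0\in S^0_{1,\delta}$ we obtain $p^{-\#}_{L}\#p=e\#(q_0\#p)=e\#(1-r)=1+r_1$ with $r_1\in S^{-\infty}$. Running the mirror argument (via formal adjoints, or the right-handed Leibniz expansion) on $p\#q_0=1-r'$, $r'\in S^{-(1-\delta)}_{1,\delta}$, yields a right parametrix $p^{-\#}_{R}\in S^0_{1,\delta}$ with $p\#p^{-\#}_{R}=1+r_2$, $r_2\in S^{-\infty}$. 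Finally the usual associativity comparison
\begin{equation*}
p^{-\#}_{L}=p^{-\#}_{L}\#(p\#p^{-\#}_{R})-p^{-\#}_{L}\#r_2=(p^{-\#}_{L}\#p)\#p^{-\#}_{R}-p^{-\#}_{L}\#r_2=p^{-\#}_{R}\ \bmod\ S^{-\infty}
\end{equation*}
shows that the two sided remainders agree modulo $S^{-\infty}$, so $p^{-\#}:=p^{-\#}_{L}\in S^0_{1,\delta}$ is a two-sided parametrix of the required order, which proves the theorem.

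I expect the only real obstacle to be the inductive verification that $q_0=\chi/p\in S^0_{1,\delta}$ directly from the quotient bounds \eqref{eq:hypo_2}; once this is in place, the rest is the textbook parametrix iteration, which survives verbatim in the $(1,\delta)$ calculus precisely because the gain $\rho-\delta=1-\delta$ in the Leibniz product is strictly positive, so that $r,r'$ have strictly negative order and the Neumann series converges asymptotically. One should also keep track that the constant $c$ in \eqref{eq:hypo_1} and the constants in \eqref{eq:hypo_2} are uniform in $x$ (as is implicit in the hypotheses), so that all symbol estimates derived above are genuinely uniform in $x$.
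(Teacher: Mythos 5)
The paper does not give a proof of this theorem at all: it cites \cite[Chapter 2, Theorem 5.4]{Kumano-Go.1981}, a classical result on parametrices for H\"ormander-hypoelliptic symbols. Your blind proof reconstructs exactly the argument one finds in that reference: cut off near $\xi=0$, show $q_0=\chi/p\in S^0_{1,\delta}$ by an induction on the Leibniz formula for $p\,q_0=\chi$, produce a one-sided parametrix by a Neumann-series asymptotic summation, and glue the left and right parametrices by associativity. So in substance you are giving the proof the paper delegates, by the same route.

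There is, however, one step that you state but do not justify, and it is precisely where the hypothesis \eqref{eq:hypo_2} must be used a second time. You assert that in the expansion $q_0\#p\sim\sum_\alpha\frac{1}{\alpha!}(\partial_\xi^\alpha q_0)(D_x^\alpha p)$ every term with $\alpha\neq0$ lies in $S^{-(1-\delta)|\alpha|}_{1,\delta}$. This is \emph{not} a consequence of the generic composition rule for $S^0_{1,\delta}\#S^m_{1,\delta}$: that rule only gives $S^{m-(1-\delta)|\alpha|}_{1,\delta}$, which for $m\geq 1-\delta$ (the case at hand in the paper, where $m=1$, $\delta=1/2$) has nonnegative order for small $|\alpha|$, so your remainder $r$ would not obviously have strictly negative order and the Neumann series would not obviously close. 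The correct order you wrote down is true, but it requires exploiting the quotient structure once more: from $p\,q_0=\chi$ one shows by induction (using \eqref{eq:hypo_2}) that $p\,\partial_\xi^\alpha\partial_x^\beta q_0\in S^{-|\alpha|+\delta|\beta|}_{1,\delta}$ on $\supp\chi$, and \eqref{eq:hypo_2} directly gives $(D_x^\alpha p)/p\in S^{\delta|\alpha|}_{1,\delta}$ there; writing $(\partial_\xi^\alpha q_0)(D_x^\alpha p)=\bigl(p\,\partial_\xi^\alpha q_0\bigr)\cdot\bigl((D_x^\alpha p)/p\bigr)$ then yields $S^{-(1-\delta)|\alpha|}_{1,\delta}$. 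This is a short additional induction, entirely parallel to the one you already carried out for $q_0\in S^0_{1,\delta}$, but as written your proof silently skips it; for $m>0$ the conclusion genuinely depends on it. Once that lemma is inserted, the rest of your argument is correct.
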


\begin{proof}
See \cite[Chapter 2, Theorem 5.4]{Kumano-Go.1981}.
\end{proof}

\begin{lem}
\label{lem_S} The operator
$S_\theta:=TK_\theta^D$ has a parametrix with symbol $s_\theta^{-\#}$ in $S^{0}_{1,1/2}\left(\mathbb{R}^{n-1}\times\mathbb{R}^n\right)$.
Moreover $s^{-\#}_\theta\#\varphi_1\in S^{-1}_{1,1/2}\left(\mathbb{R}^{n-1}\times\mathbb{R}^n\right)$.
\end{lem}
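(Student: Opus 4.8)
The plan is to compute the symbol of $TK_\theta^D$ explicitly enough to verify the hypotheses \eqref{eq:hypo_1} and \eqref{eq:hypo_2} of Theorem \ref{thm_parametrix} with $\delta=1/2$. First I would use $T=\varphi_0\gamma_0+\varphi_1\gamma_1$ together with the identities $\gamma_0 K_\theta^D = I$ (since $K_\theta^D$ is the Dirichlet potential, it satisfies $\gamma_0 K_\theta^D=I$ by \eqref{eq:solution_operator_dirichlet_problem}) to reduce $TK_\theta^D = \varphi_0 I + \varphi_1\gamma_1 K_\theta^D$. The operator $\gamma_1 K_\theta^D$ is a pseudodifferential operator on the boundary of order $1$ in the $S_{1,0}$ class; by Remark \ref{rem:dirichlet_problem}(c) the principal symbol of $K_\theta^D$ is $(\kappa_\theta^+ + i\xi_n)^{-1}$, and applying $\gamma_1 = \op_n(-i\xi_n)\circ(\text{evaluation of normal derivative})$ gives, after the plus-integral/residue computation, leading symbol $\kappa_\theta^+(x',\xi',\zeta)$ (up to the coefficient $a_n$ of $D_n^2$, which one carries along). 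Thus $TK_\theta^D = \op_{n-1}(b_\theta)$ where, up to lower order and smoothing terms, $b_\theta(x',\xi',\zeta) = \varphi_0(x') + \varphi_1(x')\,a_n(x')^{-1}\kappa_\theta^+(x',\xi',\zeta) + (\text{lower order})$.

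Next I would verify \eqref{eq:hypo_1}: using $\Re\kappa_\theta^+ \ge 5\omega|\xi',\zeta|$ from Remark \ref{rem:dirichlet_problem}(b), $\varphi_1\ge 0$, $a_n>0$, and $\varphi_0+\varphi_1\ge c>0$, one gets a lower bound $|b_\theta(x',\xi',\zeta)| \ge c'\langle\xi',\zeta\rangle$ or, at points where $\varphi_1$ is small, $|b_\theta|\ge c'>0$ from the $\varphi_0$ term — in either case $|b_\theta|\ge c'$ for large $|\xi'|$ (with the spectral parameter $\zeta=\mu$ playing the role of an extra covariable, exactly as in Section 3). The delicate point, and the reason $\delta=1/2$ enters, is \eqref{eq:hypo_2}: differentiating $b_\theta$ in $x'$ once produces a term like $\varphi_1'(x')a_n(x')^{-1}\kappa_\theta^+$, which is of the same order $1$ as $b_\theta$ itself but, near the degeneracy set $\{\varphi_1=0\}$, cannot be controlled by $|b_\theta|$ — there $b_\theta$ is only $O(1)$ while its derivative is $O(\langle\xi',\zeta\rangle)$, giving a loss of a full order per $x'$-derivative, which is too much for any $\delta<1$. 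The standard cure is to note that $\varphi_1\ge 0$ and $\varphi_1\in C^\infty$ force $|\varphi_1'|\le C\varphi_1^{1/2}$ (Glaeser/Malgrange inequality for nonnegative smooth functions), so each $x'$-derivative hitting $\varphi_1$ costs only half a power of the "large" part of $b_\theta$; tracking this carefully through the Leibniz rule yields exactly the estimate $|\partial_{x'}^\beta\partial_{\xi'}^\alpha b_\theta \cdot b_\theta^{-1}| \le C\langle\xi',\zeta\rangle^{-|\alpha|+(1/2)|\beta|}$, i.e.\ type $(1,1/2)$.

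I expect the Glaeser inequality step — showing that the half-order loss is the true loss and that it closes up under arbitrarily many derivatives — to be the main obstacle; one must organize the induction so that every appearance of a derivative of $\varphi_1$ is paired against a compensating half-power, uniformly in $x'$, $\xi'$ and the parameter. Once \eqref{eq:hypo_1} and \eqref{eq:hypo_2} are in hand, Theorem \ref{thm_parametrix} directly produces a parametrix $s_\theta \in S^0_{1,1/2}(\mathbb{R}^{n-1}\times\mathbb{R}^n)$ with $s_\theta\# b_\theta = 1 + r_1$, $b_\theta\#s_\theta = 1 + r_2$, $r_j\in S^{-\infty}$, which is the first assertion. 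For the second assertion, $s_\theta\#\varphi_1\in S^{-1}_{1,1/2}$: since the principal symbol of $s_\theta$ is $b_\theta^{-1} \sim (\varphi_0+\varphi_1 a_n^{-1}\kappa_\theta^+)^{-1}$, multiplying by $\varphi_1$ gives $\varphi_1(\varphi_0+\varphi_1 a_n^{-1}\kappa_\theta^+)^{-1}$, and because $\kappa_\theta^+$ is elliptic of order $1$ with $\varphi_1\ge0$, the elementary bound $\varphi_1/(\varphi_0+\varphi_1 t)\le C t^{-1}$ for $t\gtrsim\langle\xi',\zeta\rangle$ (again splitting according to whether $\varphi_1 t \gtrsim \varphi_0$) shows this symbol is $O(\langle\xi',\zeta\rangle^{-1})$; the derivative estimates follow by the same Glaeser-type bookkeeping, keeping $\delta=1/2$, so $s_\theta\#\varphi_1\in S^{-1}_{1,1/2}$ as claimed.
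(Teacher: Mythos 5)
Your argument for the existence of the parametrix is essentially the paper's: reduce $TK^D_\theta$ to $\varphi_0+\varphi_1\Pi_\theta$ with $\Pi_\theta$ the (elliptic, order $1$) Dirichlet-to-Neumann operator, verify the Kumano-go hypoellipticity conditions \eqref{eq:hypo_1}--\eqref{eq:hypo_2} with $\delta=1/2$, and exploit the Glaeser/Malgrange bound $|\partial_{x'}\varphi_1|\lesssim\varphi_1^{1/2}$ to absorb the half-order loss per $x'$-derivative. The paper phrases the key steps slightly differently (it uses $(\varphi_1\pi_\theta)^{k/2}\sigma_\theta^{-1}$ bounded for $k=1,2$ and notes the $|\beta|\ge2$ case is trivial because $\sigma_\theta^{-1}$ is already bounded), but the mechanism is identical; this part of your plan reproduces the published argument.

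For the second assertion, $s_\theta\#\varphi_1\in S^{-1}_{1,1/2}$, your route is genuinely different and is the weaker part of the proposal. You propose a \emph{direct} estimate of the symbol $\varphi_1\cdot\sigma_\theta^{-1}$ via the pointwise inequality $\varphi_1/(\varphi_0+\varphi_1 t)\le C t^{-1}$ and ``the same Glaeser-type bookkeeping.'' Two gaps: (i) you only treat the leading term $b_\theta^{-1}$, but the full parametrix $s_\theta\sim\sum s_j$ has terms $s_j\in S^{-j/2}_{1,1/2}$, and one must show $\varphi_1 s_j\in S^{-1}_{1,1/2}$ for $j=0,1$ as well, which is not automatic; (ii) even for $j=0$ the derivative estimates rely on a cancellation that your sketch does not exhibit: writing $\partial_{x'}(\varphi_1/\sigma_\theta)=[\partial_{x'}\varphi_1\cdot\varphi_0-\varphi_1^2\partial_{x'}\pi_\theta-\varphi_1\partial_{x'}\varphi_0]\sigma_\theta^{-2}$, the dangerous term $\partial_{x'}\varphi_1\cdot\pi_\theta$ cancels, and one still needs a min/geometric-mean argument (e.g.\ $(\varphi_1\pi_\theta+\varphi_0)^2\ge 4\varphi_1\pi_\theta\varphi_0$) to land at $O(\langle\xi',\zeta\rangle^{-1/2})$. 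This can be pushed through, but is delicate. The paper sidesteps all of this with an algebraic substitution: from $\sigma_\theta=\varphi_1\pi_\theta+\varphi_0$ and a parametrix $\pi_\theta^{-\#}$ for $\pi_\theta$ one has $\varphi_1=\sigma_\theta\#\pi_\theta^{-\#}-\varphi_1 r'_\theta-\varphi_0\pi_\theta^{-\#}$ with $r'_\theta\in S^{-\infty}$; composing on the left with $\sigma_\theta^{-\#}$ gives $s_\theta\#\varphi_1\equiv\pi_\theta^{-\#}-\sigma_\theta^{-\#}\#\varphi_0\pi_\theta^{-\#}\pmod{S^{-\infty}}$, and since $\pi_\theta^{-\#},\varphi_0\pi_\theta^{-\#}\in S^{-1}_{1,0}$ and $\sigma_\theta^{-\#}\in S^0_{1,1/2}$, the order drop is immediate, with no fresh estimates. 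Your approach buys nothing here and costs a substantially longer calculation; if you want to pursue it you must close the two gaps above, whereas the paper's identity settles the claim in three lines.
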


Before going into the proof let us point out that the difference between the Robin and the degenerate boundary value problem is the order of the operator $S_\theta$ which 
here is zero due to the zeros of $\varphi_1$ and the resulting loss of ellipticity. 
The key observation is that we gain back the loss in order by composing with the multiplication operator $\varphi_1$.

\begin{proof}
We want to show that the symbol of $S_\theta=TK_\theta^D$ satisfies inequalities \eqref{eq:hypo_1} and \eqref{eq:hypo_2}. Write 
\begin{align*}
TK^D_\theta=\varphi_1\gamma_1K^D_\theta+\varphi_0\gamma_0 K^D_\theta
=\varphi_1 \Pi_\theta+\varphi_0,
\end{align*}
where $\Pi_\theta:=\gamma_1K^D_\theta$ is the Dirichlet-to-Neumann operator. 
It is well-known and a consequence of Remark \ref{rem:dirichlet_problem}(c) that its symbol
$\pi_\theta$ is an element of $S^1_{1,0}(\mathbb{R}^{n-1}\times\mathbb{R}^n)$; 
its  principal symbol is $\kappa^+_\theta$.
By Remark \ref{rem:dirichlet_problem}(b) we have $\re\pi_\theta\geq 1$ 
for sufficiently large $|\xi,\zeta|$. Hence, the symbol $s_\theta$
of $S_\theta$ satisfies:
\begin{align}\label{eq_lower_bound}
|s_\theta|\geq |\re(\varphi_1\pi_\theta+\varphi_0)|=\varphi_1\re\pi_\theta+\varphi_0\geq \varphi_1+\varphi_0\geq c>0.
\end{align}
The constant $c$ exists by assumption. We have to verify the estimates
\begin{align*}
|\partial_{x'}^\beta\partial_{\xi'}^\alpha\partial_\zeta^l s_\theta s_\theta^{-1}|\leq \langle\xi',\zeta\rangle^{-|\alpha|-l+|\beta|/2}\;\text{for all}\; \alpha,\beta\in\mathbb{N}^{n-1}_0, l\in\mathbb{N}_0.
\end{align*}
The estimate is trivial for $|\beta|\geq 2$, as
$s_\theta\in S^1_{1,0}(\mathbb{R}^{n-1}\times\mathbb{R}^n)$ and 
$|s_\theta^{-1}|\leq c^{-1}$ by  
Equation \eqref{eq_lower_bound}. 
Equation \eqref{eq_lower_bound}  also shows that 
$(\varphi_1\pi_\theta)^{k/2}s_\theta^{-1}$ is bounded for $k=1,2$. 
The ellipticity of $\pi_\theta$ implies that $|\pi_\theta|^{-k/2}\lesssim\langle\xi',\zeta\rangle^{-k/2}$. We obtain the remaining estimates:
\begin{align*}
|\partial^\alpha_{\xi'}\partial_\zeta^l s_\theta s_\theta^{-1}|
&\equiv|\varphi_1 \partial^\alpha_{\xi'}\partial_\zeta^l\pi_\theta s_\theta^{-1}|
=|\partial^\alpha_{\xi'}\partial_\zeta^l\pi_\theta\pi_\theta^{-1}|
|\varphi_1 \pi_\theta(\varphi_1 \pi_\theta+\varphi_0 )^{-1}|
\lesssim \langle\xi'\rangle^{-|\alpha|-l}\;\\
\intertext{and with the help of the inequality 
$|\partial_{x_j}\varphi_1(t)|^2 \le \|\varphi_1''\|_\infty|\varphi_1(t)|$:}
|\partial_{x_j}\partial^\alpha_{\xi'}\partial_\zeta^l s_\theta s_\theta^{-1}|
&\equiv|\partial_{x_j}\varphi_1 \partial^\alpha_{\xi'}
\partial_\zeta^l\pi_\theta s_\theta^{-1}|
\lesssim\|\varphi_1''\|^{1/2}_\infty|(\varphi_1 \pi_\theta)^{1/2}s_\theta^{-1}||\pi_\theta|^{-1/2}|\partial^\alpha_{\xi'}\partial_\zeta^l\pi_\theta|\\
&\lesssim \langle\xi'\rangle^{1/2-|\alpha|-l}.
\end{align*}
Here $\equiv$ means equality modulo terms that satisfy the estimate. 
According to Theorem \ref{thm_parametrix}, there exists a parametrix to $S_\theta$ with symbol $s_\theta^{-\#}\in S^0_{1,1/2}(\mathbb{R}^{n-1}\times\mathbb{R}^{n})$.
We still need to show that multiplication by $\varphi_1$ reduces the order.
As $\pi_\theta$ is elliptic, there exists a parametrix $\pi_\theta^{-\#}$ such that $\pi_\theta\pi_\theta^{-\#}-1=r'_\theta$ is regularizing, and we find that 
\begin{align*}
\varphi_1 =s_\theta\#\pi_\theta^{-\#}-\varphi_1 \#r'_\theta-\varphi_0 \#\pi_\theta^{-\#}.
\end{align*}
Composition with $\varphi_1 $ or $\varphi_0 $ from the left is just pointwise multiplication. Hence we obtain the improved order of  $s_\theta^{-\#}\#\varphi_1 $ from 
the identities
\begin{align*}
s_\theta^{-\#}\#\varphi_1 
&\equiv s_\theta^{-\#}\#[s_\theta\#\pi_\theta^{-\#}-\varphi_0 \pi_\theta^{-\#}]&\mod  & S^{-\infty}(\mathbb{R}^{n-1}\times\mathbb{R}^n)\text{\;\;and}\\
&\equiv \pi_\theta^{-\#}-s_\theta^{-\#}\#\varphi_0 \pi_\theta^{-\#}&\mod  & S^{-\infty}(\mathbb{R}^{n-1}\times\mathbb{R}^n).
\end{align*}
As $\varphi_0 \pi_\theta^{-\#}, \pi_\theta^{-\#}\in S^{-1}_{1,0}$ and $s_\theta^{-\#} \in S^{0}_{1,1/2}$, this completes the proof. 
\end{proof}

\section{Bounded $H^\infty$-calculus}
In this section we will prove Theorem \ref{thm:main_result}. 
\subsection*{The half space and constant coefficients} 
First, we consider the case where the underlying manifold is the euclidean half-space, the coefficients of the differential operator are constant and only the top order terms are non-zero. In symbols, $X=\mathbb{R}^n_+$, $a_{ij}(x)=a_{ij}\in\mathbb{R}$, $b_j(x)=0$ and $c(x)=0$. According to the last section, the resolvent of $A_T+\nu$ has the following structure:
\begin{align*}
(A_T+\nu-\lambda)^{-1}=Q'_{\theta,\mu,+}+G'_{\theta,\mu}+R(\lambda),
\end{align*}
where $R(\lambda)\in\mathcal{L}(L_p(\mathbb{R}^n_+))$ and $\|R(\lambda)\|=\mathcal{O}(\langle\lambda\rangle^{-1-\varepsilon})$ for some $0<\varepsilon$.
For the proof of Theorem  \ref{thm:main_result} it is sufficient to provide Estimate \eqref{eq:Hinfty_estimate}. According to the equation above, we may estimate the three terms on the right hand side separately. The estimate for the first term is well-known, in fact it is the same as in the non-degenerate case. Any operator whose norm in $\mathcal{L}(L_p(\mathbb{R}^n_+))$ is $\mathcal{O}(\langle\lambda\rangle^{-1-\varepsilon})$, for some $0<\varepsilon$, is integrable along the boundary of $\Sigma_\theta$ and therefore the estimate holds. To provide the estimate for the singular Green part we need the following.
\begin{lem}\label{lem:exp_symbol} Let $\sigma\in S^1_{1,0}(\mathbb{R}^{n-1}\times\mathbb{R}^n)$ and $\re\sigma(x',\xi',\zeta)\geq c|\xi',\zeta|$. Then the map
\begin{align*}
\mathbb{R}_+\ni t\mapsto \exp(-\sigma(x',\xi',\zeta)t)\in S^0_{1,0}(\mathbb{R}^{n-1}\times\mathbb{R}^{n})
\end{align*}
is uniformly bounded. In fact, we have a bound $C=C(|\sigma|_*,c)$ on the seminorms.
\end{lem}
\begin{proof}
Induction over $|\alpha|+|\beta|+l=N$ shows that  $D^\alpha_{\xi'}D^\beta_{x'}D^l_\zeta \exp(-\sigma(x',\xi',\zeta)t)$ is a linear combination over all $k\leq N$, $\alpha_1+\dots+\alpha_k=\alpha$, $\beta_1+\dots+\beta_k=\beta$, and $l_1+\dots+l_k=l$. The terms in the linear combination have the following structure:
\begin{align*}
\left(D^{\alpha_1}_{\xi'}D^{\beta_1}_{x'}D^{l_1}_{\zeta}\sigma(x',\xi',\zeta)\cdots D^{\alpha_k}_{\xi'}D^{\beta_k}_{x'}D^{l_k}_{\zeta}\sigma(x',\xi',\zeta)\right)(-t)^k\exp(-\sigma(x',\xi',\zeta) t).
\end{align*}
Furthermore, the assumption $\sigma\in S^1_{1,0}(\mathbb{R}^{n-1}\times\mathbb{R}^n)$ implies:
\begin{eqnarray*}\lefteqn{
\left|D_{\xi'}^{\alpha_1} D_{x'}^{\beta_1}D_\zeta^{l_1}\sigma(x',\xi',\zeta)\cdots D^{\alpha_n}_{\xi'} D^{\beta_k}_{x'}D_\zeta^{l_k}\sigma(x',\xi',\zeta)\right|}\\
&\leq& 
\prod_{i=1}^k |\sigma|_*|\xi',\zeta|^{1-|\alpha_i|-l_i}=|\sigma|^k_*|\xi',\zeta|^{k-|\alpha|-l}.
\end{eqnarray*}
Moreover, we use the fact that $s^k\exp(-s)$ is bounded on the positive real axis in order to obtain:
\begin{eqnarray*}\lefteqn{
\left|(-t)^k\exp(-\sigma(x',\xi',\zeta) t)\right|}\\
&=& t^k\exp(-\re \sigma(x',\xi',\zeta) t)\leq t^k\exp(-c|\xi',\zeta|t)\leq c^{-k}|\xi',\zeta|^{-k}C.
\end{eqnarray*}
According to the last two estimates, all terms in the linear combination can be estimated by $C|\xi',\zeta|^{-|\alpha|-l}$.
\end{proof}
\begin{lem}\label{lem:est_G}A constant $C=C(|t|_*,M,\vartheta)$ exists such that
\begin{align*}
\left\|\int_{\partial\Sigma_{\theta}}f(\lambda)G'_\lambda\,d\lambda\right\|_{\mathcal{B}(L_p(\mathbb{R}^n_+))}\leq C\|f\|_{L_\infty(\Sigma_\vartheta)}\;\;\text{for all}\;\;f\in H^\infty_0(\Sigma_\vartheta).
\end{align*}
\end{lem}
\begin{proof} The boundary of $\Sigma_\theta$ consists of the two rays $e^{\pm i\theta}\mathbb{R}$, which can be treated separately and analogously. Thus, providing the estimate for the following operator is  sufficient:
\begin{align*}
I^+:=2^{-1}e^{-i{\theta}}\int_{\lambda=e^{i\theta}\mu^2}f(\lambda)G'_\lambda\,d\lambda=\int_0^\infty \mu f(\mu^2 e^{i\theta})G'_{\theta,\mu}\,d\mu.
\end{align*}
For the estimate, we use the explicit description of the symbol-kernel of $G'_{\theta}$ in Lemma \ref{lem:symbol_G}.
Since $s^T_{\theta}\in S^{-1}_{1,1/2}(\mathbb{R}^{n-1}\times\mathbb{R}^n)$, $\zeta s^T_{\theta}(x',\xi',\zeta)\in S^0_{1,1/2}(\mathbb{R}^{n-1}\times\mathbb{R}^n)$. According to Remark \ref{rem:dirichlet_problem}, the roots $\kappa^\pm_{\theta}$ are strongly elliptic and a constant $c=c(M,\vartheta)>0$ exists such that:
\begin{align*}
\re\kappa^\pm_{\theta}(x',\xi',\zeta)\geq 2c|\xi,\zeta|.
\end{align*}
Thus, $\sigma^\pm_{\theta}(x',\xi',\zeta):=\kappa^\pm_{\theta}(x',\xi',\zeta)-c\zeta$ satisfies the assumption of Lemma \ref{lem:exp_symbol} and
the map, below, is uniformly bounded:
\begin{eqnarray*}\lefteqn{\mathbb{R}^2_{++}\ni(x_n,y_n)\mapsto h_{\theta}(x',\xi',\zeta;x_n,y_n)
}\\
&:=&\zeta e^{c\zeta(x_n+y_n)} \tilde{g}'_{\theta}(x',\xi',\zeta;x_n,y_n)\in S^0_{1,1/2}(\mathbb{R}^{n-1}\times\mathbb{R}^n).
\end{eqnarray*}
Now, we analyze the action of $G'_{\theta,\mu}$ in the direction transversal to the boundary. To this end, we define a family of operators that act on $\mathcal{S}(\mathbb{R}^{n-1})$:
\begin{align*}
[G'_{\theta,\mu}(x_n,y_n)v](x'):=\int e^{ix'\xi'}\tilde{g}'_{\theta,\mu}(x',\xi';x_n,y_n)\hat{v}(\xi')\,\dbar\xi'.
\end{align*}
Correspondingly, we define $H_{\theta,\mu}(x_n,y_n)$ from $h_\theta$. Please note that:
\begin{align*}
\mu e^{c\mu(x_n+y_n)}G'_{\theta,\mu}(x_n,y_n)=H_{\theta,\mu}(x_n,y_n).
\end{align*}
Since the seminorms of $h_{\theta}$ are uniformly bounded with respect to $(x_n,y_n)\in\mathbb{R}^2_{++}$, Theorem \ref{thm:spectral_decay} shows that:
\begin{align*}
\|\mu G'_{\theta,\mu}(x_n,y_n)v\|_{L_p(\mathbb{R}^{n-1})}&\leq e^{-c\mu(x_n+y_n)}\|H_{\theta,\mu}v\|_{L_p(\mathbb{R}^{n-1})}\\
&\leq e^{-c\mu(x_n+y_n)}C\|v\|_{L_p(\mathbb{R}^{n-1})}.
\end{align*}
Furthermore, if $u=v\otimes w\in\mathcal{S}(\mathbb{R}^{n-1})\otimes\mathcal{S}(\mathbb{R}_+)$ is a simple tensor, then: 
\begin{align*}
[I^+ u](x',x_n)=\int_0^\infty\int _0^\infty f(\mu^2 e^{i\theta})[\mu G'_{\theta,\mu}(x_n,y_n)v](x')w(y_n)\,dy_nd\mu.
\end{align*}
In order to provide the estimate for $I^+$,
it is sufficient to consider simple tensors because they span a dense subset of $L_p(\mathbb{R}^n_+)$. Therefore:
\begin{align*}
\|I^+ u\|_{L_p(\mathbb{R}^n_+)}&\leq \|f\|_{\infty}\left\|\int_0^\infty\int _0^\infty \|\mu G_{\theta,\mu}(x_n,y_n)v\|_{L_p(\mathbb{R}^{n-1})}|w(y_n)|\,dy_nd\mu \right\|_{L_p(\mathbb{R}_+)}\\
&\hspace*{-3em}\leq C\|f\|_{\infty}\|v\|_{L_p(\mathbb{R}^{n-1})}\left\|\int_0^\infty\int _0^\infty \exp(-c\mu(x_n+y_n))|w(y_n)|\,dy_nd\mu \right\|_{L_p(\mathbb{R}_+)}\\
&\hspace*{-3em}\leq C\|f\|_{\infty}\|v\|_{L_p(\mathbb{R}^{n-1})}\left\|\int_0^\infty \frac{|w(y_n)|}{x_n+y_n}\,dy_n \right\|_{L_p(\mathbb{R}_+)}\\
&\hspace*{-3em}\leq C\|f\|_{\infty}\|v\|_{L_p(\mathbb{R}^{n-1})}\|w\|_{L_p(\mathbb{R}_+)}
= C\|f\|_{\infty}\|u\|_{L_p(\mathbb{R}^n_+)},
\end{align*}
where we used $L_p$-boundedness of the Hilbert transform for the latter inequality. The estimate implies that $I^+\in \mathcal{B}(L_p(\mathbb{R}^n_+))$ and $\|I^+\|\leq C\|f\|_{L_\infty(\Sigma_\vartheta)}$. Here, $C=C(M,|t|_*,\vartheta)$ is the constant in the estimate above.
\end{proof}
We now have proven Theorem \ref{thm:main_result} for differential operator with constant coefficients. 
\begin{rem} The above arguments also provide the result for the case of smooth coefficients. However, in this case the constants also depend on the symbol seminorms of the differential operator.
\end{rem} 
\subsection*{The euclidean half space}\label{sec:non-smooth,half-space}
Now, we treat the situation where $X=\mathbb{R}^n_+$, but the coefficients of the differential operator may non be constant. We assume that $a_{ij}\in C^\tau(\mathbb{R}^n_+)$ for some $\tau>0$ and $b_j,c\in L_\infty(\mathbb{R}^n_+)$.
We use the classical approach of freezing coefficients. We only freeze the coefficients of the differential operator, \emph{not} those of the boundary operator. We use a localization scheme similar to that used by Kunstmann and Weis in \cite{Kunstmann2004}. This provides a family of operators that are small perturbations of an operator with frozen coefficients. We will prove that they allow a bounded $H^\infty$-calculus in a uniform manner. By patching together these operators, we can conclude that $A_T$ itself allows a bounded $H^\infty$-calculus.
We choose a small $r>0$, how small we have to chose $r$ will become clear later on. 
We define the cubes $Q=(-r,r)^n$ and $Q_l:=Q+l$, with $l\in\Gamma:=r(\mathbb{Z}\times\mathbb{N}_0)$. Observe that $\mathbb{R}^n_+\subset\cup_{l\in\Gamma}Q_l$.
We fix a positive function $\psi\in C_c^\infty(Q)$ such that $\gamma_1\psi=0$ and
\begin{align}\label{eq:partition of unity}
\sum_{l\in\Gamma}\psi_l(x)=1\;\;\text{for all}\;\; x\in\mathbb{R}^n_+,\;\;\text{where}\;\; \psi_l(x)=\psi(x-l).
\end{align}
Moreover, we choose a cut-off function $\chi\in C^\infty_c(Q)$ such that $\chi=1$ on $\supp\psi$ and define $\chi_l(x):=\chi(x-l)$. We define $A_l$ as the $L_p$-realization with respect to the boundary operator $T$ of the following differential operator.
\begin{align*}
\mathcal{A}_l=\mathcal{A}^c_l+\mathcal{A}^s_l=\sum_{|\alpha|=2}a_\alpha(l)D^\alpha+\sum_{|\alpha|=2}\chi_l(x)[a_\alpha(x)-a_\alpha(l)]D^\alpha
\end{align*}
Observe that $A_l\psi_l=A_T'\psi_l$, where $A_T'$ denotes the $L_p$-realization of the principal part of $\mathcal{A}$. The major technical difficulty is to show that the each operator in the family $(A_l)_{l\in\Gamma}$ allows a bounded $H^\infty$-calculus, with uniform estimates. More precisely, for suitably chosen $r>0$:
\begin{lem}\label{lem:technical} The operator $A_l$ belongs to $H^\infty(\Sigma_{\theta})$ for all $\theta>0$ and $l\in\Gamma$. Moreover there exists a $C:=C(M,\theta,\|a_\alpha\|_{C^\tau},|t|_*)>0$ such that
\begin{align*}
\|f(A_l)\|_{\mathcal{B}(L_p(\mathbb{R}^n_+))}\leq C\|f\|_\infty\;\;\text{for all}\;\; f\in H^\infty(\Sigma_{\theta})\;\;\text{and}\;\; l\in\Gamma.
\end{align*}
\end{lem}
We can choose $r>0$ such that $A^s_l$ is a small perturbation of $A_l^c+\nu$, in the sense of the following result. Recall that the shift $\nu$ was introduced to ensure the existence of a unique solution to the boundary problem.
\begin{thm}\label{thm:pertubation} Let $E$ be a Banach space with the $UMD$ property, let $A\in \mathcal{S}(E)$ have a bounded $H^\infty(\Sigma_\vartheta)$-calculus, and $0\in\rho(A)$. Suppose that $B$ is a linear operator in $E$ such that $\mathcal{D}(A)\subset\mathcal{D}(B)$ and
\begin{align*}
\|Bu\|_E\leq \varepsilon\|Au\|_E\;\;\text{for all}\;\; u\in\mathcal{D}(A),
\end{align*}
for some $\varepsilon>0$.
Suppose further that $\gamma\in(0,1)$ and a constant $C>0$ exist such that
\begin{align*}
B(\mathcal{D}(A^{1+\gamma}))\subset \mathcal{D}(A^\gamma)\;\;\text{and}\;\; \|A^\gamma Bx\|_E\leq C\|A^{1+\gamma} x\|_{E}\;\;\text{for}\;\;x\in\mathcal{D}(A^{1+\gamma}).
\end{align*}
Then $A+B$ has a bounded $H^\infty(\Sigma_\vartheta)$-calculus in $E$, provided $\varepsilon$ is sufficiently small. Moreover, a constant $C_{A+B}=C_{A+B}(C_A,\varepsilon,C)$ exists such that 
\begin{align*}
\|f(A+B)\|_{\mathcal{B}(E)}\leq C_{A+B}\|f\|_\infty.
\end{align*}
\end{thm}
For the proof we refer to \cite{Denk2004}. To verify the assumptions of the theorem above, we observe:
\begin{lem}\label{lem:small_coefficents} A constant $C>0$ exists such that for $a^s_{l,\alpha}:=\chi_l(a_\alpha-a_\alpha(l))$:
\begin{align*}
\|a_{l,\alpha}^s\|_\infty\leq C\|a_\alpha\|_{C^\tau(\mathbb{R}^n_+)}r^\tau\;\;\text{and}\;\;\|a_{l,\alpha}^s\|_{C^\sigma(\mathbb{R}^n_+)}\leq C\|a_\alpha\|_{C^\tau(\mathbb{R}^n_+)}r^{\tau-\sigma},
\end{align*}
given that $0<\sigma\leq \tau$.
\end{lem} 
\begin{proof}
We recall that $r$ is proportional to the diameter of the cube $Q$. Thus, 
\begin{align*}
\|a_{l,\alpha}^s\|_\infty&\leq \sup \left\{\frac{|a_\alpha(x)-a_{\alpha}(l)|}{|x-l|^\tau}|x-l|^\tau: x\in\supp(\chi_l)\right\}\nonumber\\
&\leq C\|a_\alpha\|_{C^\tau(\mathbb{R}^n_+)} r^\tau.
\end{align*}
By a similar argument, we obtain the second estimate.
\end{proof}
Next, we verify that the lemma above implies the following estimate: 
\begin{align}\label{eq:small_pertubation_1}
\|A^s_lu\|_{L_p(\mathbb{R}^n_+)}\leq Cr^\tau\|(A_l^c+\nu)u\|_{L_p(\mathbb{R}^n_+)}\;\;\text{for all}\;\; u\in H^2_p(\mathbb{R}^n_+)\cap \ker T.
\end{align}
It is well-known that $C^\tau(\mathbb{R}^n_+)\hookrightarrow\mathcal{B}(H^s_{p}(\mathbb{R}^n_+))$ as a multiplication operator for $0\leq s\leq \tau$. Therefore, with $s=0$ we obtain:
\begin{align*}
\|A^s_lu\|_{L_p(\mathbb{R}^n_+)}\leq \sum_{1\leq i,j\leq n}\|a_{l,ij}^s\|_{C(\mathbb{R}^n_+)}\|u\|_{H^2_p(\mathbb{R}^n_+)}\leq Cr^\tau\|u\|_{H^2_p(\mathbb{R}^n_+)}. 
\end{align*}
Furthermore,  on $H^2_p(\mathbb{R}^n_+)\cap \ker T$, the norm $\|(A^c_l+\nu)\cdot\|_{L_p(\mathbb{R}^n_+)}$ and the $H^2_p(\mathbb{R}^n_+)$ norm are equivalent because $(A^c_l+\nu)$ is invertible. Hence, Equation \eqref{eq:small_pertubation_1} holds.
Now, we compute the domain of $(A^c_l+\nu)^\gamma$ for $2\gamma<\min\{1/p,\tau\}$. According to Theorem \cite[Theorem 1.15.2]{Triebel1978}, the domain is:
\begin{align*}
\mathcal{D}((A^c_l+\nu)^\gamma)=[L_p(\mathbb{R}^n_+),H^2_p(\mathbb{R}^n_+)\cap \ker T]_\gamma.
\end{align*}
We write $\dot{H}^2_{p}(\mathbb{R}^n_+)$ for the closure of $C^\infty_c(\mathbb{R}^n_+)$ in $H^2_{p}(\mathbb{R}^n_+)$.
By interpolation, the embedding $\dot{H}^2_{p}(\mathbb{R}^n_+)\hookrightarrow H^2_p(\mathbb{R}^n_+)\cap \ker T\hookrightarrow H^2_p(\mathbb{R}^n_+)$ implies:
\begin{align*}
\dot{H}^{2\gamma}_{p}(\mathbb{R}^n_+)\hookrightarrow[L_p(\mathbb{R}^n_+),H^2_p(\mathbb{R}^n_+)\cap \ker T]_\gamma\hookrightarrow H^{2\gamma}(\mathbb{R}^n_+).
\end{align*}
As $H^{2\gamma}_p(\mathbb{R}^n_+)=\dot{H}^{2\gamma}_{p}(\mathbb{R}^n_+)$ for $2\gamma<1/p$, we conclude that $\mathcal{D}((A^c_l+\nu)^\gamma)=H^{2\gamma}_p(\mathbb{R}^n_+)$. Furthermore, the operator $(A^c_l+\nu)^\gamma$ is invertible. Thus, $\|(A^c_l+\nu)^\gamma\cdot\|_{L_p(\mathbb{R}^n_+)}$ and $\|\cdot\|_{H^{2\gamma}_p(\mathbb{R}^n_+)}$ are equivalent norms on $\mathcal{D}((A^c_l+\nu)^\gamma)$. We make use of Lemma \ref{lem:small_coefficents} and the embedding $C^\sigma(\mathbb{R}^n_+)\hookrightarrow\mathcal{B}(H^s_{p}(\mathbb{R}^n_+))$ to obtain the following estimate:
\begin{align*}
\|(A^c_l+\nu)^\gamma A^s_lu\|_{L_p(\mathbb{R}^n_+)}\leq C\|A^s_lu\|_{H^{2\gamma}_p(\mathbb{R}^n_+)}\leq Cr^{\tau-2\gamma}\|u\|_{H^{2+2\gamma}_p(\mathbb{R}^n_+)}.
\end{align*}
We can further estimate the right hand side with \cite[p.~70]{Krietenstein2019}:
\begin{align*}
\|u\|_{H^{2+2\gamma}_p(\mathbb{R}^n_+)}\leq C\|(\nu+A^c_l)u\|_{H^{2\gamma}_p(\mathbb{R}^n_+)}\leq \|(\nu+A^c_l)^{1+\gamma}u\|_{L_p(\mathbb{R}^n_+)}.
\end{align*}
In sum, the following estimate holds for all $u\in \mathcal{D}((\nu+A^c_l)^{1+\gamma})$:
\begin{align}\label{eq:small_pertubation_2}
\|(\nu+A^c_l)^\gamma A^s_lu\|_{L_p(\mathbb{R}^n_+)}\leq Cr^{\tau-2\gamma}\|(\nu+A^c_l)^{1+\gamma}u\|_{L_p(\mathbb{R}^n_+)}.
\end{align}
The constants in Equation \eqref{eq:small_pertubation_1} and \eqref{eq:small_pertubation_2} are independent of $l$ and $r$. Therefore, we can choose $r$ such that Theorem \ref{thm:pertubation} applies to $\nu+A^c_l+A^s_l$ and thus Lemma \ref{lem:technical} holds.\\
Now we describe the localization scheme. We define $\mathbb{H}^s_p(\mathbb{R}^n_+):=l_p(\Gamma,H^s_p(\mathbb{R}^n_+))$ and we write $\mathbb{L}^s_p(\mathbb{R}^n_+)$ if $s=0$. We introduce the localization operator $L$ and the patching operator $P$ with the help of partition of unity \eqref{eq:partition of unity}:
\begin{align*}
& L:L_p(\mathbb{R}^n_+)\rightarrow\mathbb{L}_p(\mathbb{R}^n_+),\;\;  u\mapsto (\psi_lu)_{l\in\Gamma}.\\
& P:\mathbb{L}_p(\mathbb{R}^n_+)\rightarrow L_p(\mathbb{R}^n_+),\;\;  (u_l)_{l\in\Gamma}\mapsto \sum_{l\in\Gamma}\chi_lu_l.
\end{align*}
We also define the operator $\mathbb{T}:\mathbb{H}^2_p(\mathbb{R}^n_+)\rightarrow l_p(\Gamma;B^{1-1/p}_p(\mathbb{R}^n_+))$, $(u_l)_{l\in\Gamma}\rightarrow (Tu_l)_{l\in\Gamma}$.
We collect some properties of these operators, which follow directly form the definitions:
\begin{lem} Let $L$, $P$ and $\mathbb{T}$ be as above. Then 
\begin{enumerate}
\item $L\in\mathcal{B}(H^s_p(\mathbb{R}^n_+);\mathbb{H}^s_p(\mathbb{R}^n_+))$
\item $P\in\mathcal{B}(\mathbb{H}^s_p(\mathbb{R}^n_+);H^s_p(\mathbb{R}^n_+))$
\item $PL=1$
\item $L:H^2_p(\mathbb{R}^n_+)\cap\ker T\rightarrow \mathbb{H}^2_p\cap \ker \mathbb{T}$
\item $P:\mathbb{H}^2_p\cap \ker \mathbb{T}\rightarrow H^2_p(\mathbb{R}^n_+)\cap\ker T$
\end{enumerate}
\end{lem}
We write $A_{lk}:=\delta_{lk}A_l$, with domain $\mathcal{D}(A_{lk})=H^2_p(\mathbb{R}^n_+)\cap \ker T$. We define
\begin{align}
\label{eq:def_Fat_A}
\mathbb{A}:\mathcal{D}(\mathbb{A}):=\mathbb{H}^2_p(\mathbb{R}^n_+)\cap \ker\mathbb{T} \subset\mathbb{L}_p(\mathbb{R}^n_+)\rightarrow \mathbb{L}_p(\mathbb{R}^n_+),\;\;(u_k)_{k\in\Gamma}\mapsto \left(\sum_{k\in\Gamma} A_{lk}u_k\right)_{l\in\Gamma}.
\end{align}
Similar we define $\mathbb{B}$ and $\mathbb{D}$ for the following families of operators.
\begin{align*}
B_{lk}:=\delta_{lk}A_{low}+[\psi_l,A]\psi_k\;\;\text{and}\;\;D_{lk}= \delta_{lk}A_{low}+\psi_l[A_k+A_{low},\psi_k].
\end{align*}
Here $A_{low}$ denotes the $L_p$-realisation with respect to the boundary operator $T$ of $\mathcal{A}-\mathcal{A}'$.
All sums in \eqref{eq:def_Fat_A} are finite. In fact, we have a symmetric relation $l\bowtie k:\Leftrightarrow \supp \psi_l\cap\supp\psi_k\neq\emptyset$ on $\Gamma$. The definition of $\psi_l$ implies that for fixed $l\in\Gamma$ the set $\Gamma_l:=\{k\in\Gamma:k\bowtie l\}$ is finite. Obviously $B_{lk}=0$ and $D_{lk}=0$ if $k\neq \Gamma_l$.
The operators above are defined such that they satisfy the following relations.
\begin{align}
\label{eq:relation_B}
LA&=(\mathbb{A+B})L\;\;\text{on}\;\; \mathcal{D}(A)\;\;\text{and}\\
\label{eq:relation_D}
AP&=P(\mathbb{A+D})\;\;\text{on}\;\; \mathcal{D}(\mathbb{A}).
\end{align}
For suitably chosen $r>0$ we obtain: 
\begin{lem}\label{lem:diagonal}
The operator $\mathbb{A}$ belongs to $H^\infty(\Sigma_\theta)$ for each $\theta>0$.
\end{lem}
\begin{proof}
We fix $\theta>0$ and choose $r>0$ such that Lemma \ref{lem:technical} applies. In particular, $\Sigma_\theta\subset \rho (A_l)$ for all $l\in\Gamma$ with uniform bounds on the inverse. Therefore, the inverse of $\lambda-\mathbb{A}$ exists and is given by $(\lambda-\mathbb{A})^{-1}(u_l)_{l\in\Gamma}=((\lambda-A_l)^{-1}u_l)_{l\in\Gamma}$. For each $l\in\Gamma$ we have a bounded operator
\begin{align*}
\hat{l}:\mathbb{L}_p(\mathbb{R}^n_+)\rightarrow L_p(\mathbb{R}^n_+),\;\;(u_k)_{k\in\Gamma}\rightarrow u_l.
\end{align*}
Let $C$ be as in Lemma \ref{lem:technical} and $f\in H_*^\infty(\Sigma_\theta)$. Then
\begin{align*}
\|f(\mathbb{A})(u_k)_{k\in\Gamma}\|^p_{\mathbb{L}_p(\mathbb{R}^n_+)}&=\sum_{l\in\Gamma}\left\|\hat{l}\int_{\partial\Sigma_\theta}f(\lambda)(\lambda+\mathbb{A})^{-1}(u_k)_{k\in\Gamma}\,d\lambda\right\|^p_{L_p(\mathbb{R}^n_+)}\\
&=\sum_{l\in\Gamma}\left\|\int_{\partial\Sigma_\theta}f(\lambda)(\lambda-A_l)^{-1}u_l\,d\lambda\right\|^p_{L_p(\mathbb{R}^n_+)}\\
&\leq \sum_{l\in\Gamma} C^p\|f\|^p_\infty\|u_l\|_{L_p(\mathbb{R}^n_+)}=C^p\|f\|^p_\infty\|u\|^p_{\mathbb{L}_p(\mathbb{R}^n_+)}.
\end{align*}
This estimate is sufficient to see that $\mathbb{A}\in H^\infty(\Sigma_\theta)$.
\end{proof}
Next, we observe that both $\mathbb{B}$ and $\mathbb{D}$ are lower order perturbations of $\mathbb{A}$ in the sense of the following well-known perturbations theorem going back to Amann. For a proof we refer to \cite[Proposition 13.1]{Kunstmann2004}.
\begin{thm}\label{thm:lower_order_pertubation} Let $A\in\mathcal{S}(E)$ have a bounded $H^\infty(\Sigma_\theta)$-calculus in $E$ and assume $0\in\rho(A)$. Let $\gamma\in(0,1)$ and suppose that $B$ is a linear operator in $E$ satisfying $\mathcal{D}(B)\supset\mathcal{D}(A)$, and
\begin{align*}
\|Bu\|_E\leq C\|A^{1-\gamma}u\|_E\;\;\text{for all}\;\; u\in\mathcal{D}(A),
\end{align*}
where $C>0$. Then $\nu+A+B$ has a bounded $H^\infty(\Sigma_\theta)$-calculus in $E$ for $\nu\geq 0$ sufficiently large.
\end{thm}
In particular, for suitably chosen $r>0$ we obtain:

\begin{lem}\label{lem:band} For each $\theta>0$ a constant $\nu\geq0$ exists such that both $\nu+\mathbb{A+B}$ and $\nu+\mathbb{A+D}$ belong to $H^\infty(\Sigma_\theta)$. 
\end{lem}
\begin{proof}
We can assume that $0\in\rho(\mathbb{A})$, otherwise we consider $\nu+\mathbb{A}$. Thus, $\mathbb{A}^{(1-\gamma)}$ is invertible and $\|\cdot\|_{\mathcal{D}(\mathbb{A}^{1-\gamma})}$ is equivalent to $\|\mathbb{A}^{1-\gamma}\cdot\|_{\mathbb{L}_p(\mathbb{R}^n_+)}$. According to Lemma \ref{lem:diagonal}, the operator $\mathbb{A}$ belongs to $H^\infty(\Sigma_\theta)$ and therefore has bounded imaginary powers. According to \cite[Theorem 1.15.2]{Triebel1978}, the domain of $\mathbb{A}^{1-\gamma}$ is given by complex interpolation.
\begin{align*}
\mathcal{D}(\mathbb{A}^{1-\gamma})=[\mathbb{L}_p(\mathbb{R}^n_+),\mathcal{D}(\mathbb{A})]_{1-\gamma}\hookrightarrow[\mathbb{L}_p(\mathbb{R}^n_+),\mathbb{H}^2_p(\mathbb{R}^n_+))]_{1-\gamma}=\mathbb{H}^{2-2\gamma}_p(\mathbb{R}^n_+))
\end{align*}
We can focus on $\mathbb{B}$, because the arguments for $\mathbb{D}$ are the same. A closer look on the definition of $B_{lk}$ reveals that it is a first order differential operator. In particular, for each $\gamma<1/2$ we have the standard estimate:
\begin{align}\label{eq:estBkl}
\|B_{lk}u\|_{L_p(\mathbb{R}^n_+)} \leq C\|u\|_{H^1_p(\mathbb{R}^n_+)}\leq C\|u\|_{H^{2-2\gamma}_p(\mathbb{R}^n_+)}.
\end{align}
Note that the constant $C>0$ only depends on the $L_\infty$-norm of the coefficients and thus can be chosen independent of $k$ and $l$.
We write $N:=\sup _{l\in\Gamma} \#\{k\in\Gamma :k\bowtie l\}$. Then by estimate \eqref{eq:estBkl} 
\begin{align*}
\|\mathbb{B}(u_k)_{k\in\Gamma}\|^p_{\mathbb{L}_p(\Gamma)}&=\sum_{l\in\Gamma}\left\|\sum_{k\,\bowtie\, l} B_{lk}u_k\right\|^p_{L_p(\mathbb{R}^n_+)}
\leq \sum_{l\in\Gamma}\left(\sum_{k\,\bowtie\, l} C\left\|u_k\right\|_{H^{2-2\gamma}_p(\mathbb{R}^n_+)}\right)^p\\
&\leq \sum_{l\in\Gamma} C^pN^p\sup_{k\,\bowtie\,l}\left\|u_k\right\|_{H^{2-2\gamma}_p(\mathbb{R}^n_+)}^p
\leq C^pN^p\sum_{l\in\Gamma} \sum_{k\,\bowtie\,l}\left\|u_k\right\|_{H^{2-2\gamma}_p(\mathbb{R}^n_+)}^p\\
&\leq  C^pN^{p+1}\sum_{l\in\Gamma}\left\|u_l\right\|_{H^{2-2\gamma}_p(\mathbb{R}^n_+)}^p=C^pN^{p+1}\|(u_l)_{l\in\Gamma}\|_{\mathbb{H}^{2-2\gamma}(\mathbb{R}^n_+)}\\
&\leq C^pN^{p+1}\|\mathbb{A}^{1-\gamma}(u_l)_{l\in\Gamma}\|^p_{\mathbb{L}_p(\mathbb{R}^n_+)}.
\end{align*}
In the fourth inequality we used the symmetry of the relation $\bowtie$ to change the order of summation. We finish the proof by the application of Theorem \ref{thm:lower_order_pertubation} to $\nu+\mathbb{A+B}$.
\end{proof}
Now, we can prove Theorem \ref{thm:main_result} for the case $X=\mathbb{R}_+^n$.
\begin{proof}
For given $\theta>0$ we choose $\nu,r>0$ such that Lemma \ref{lem:band} applies. For each $\lambda\in\Sigma_\theta$ the operator $\lambda-(\nu+A_T)$ is invertible with left inverse $P(\lambda-(\nu+\mathbb{A+B}))^{-1}L$ and right inverse $P(\lambda-(\nu+\mathbb{A+D}))^{-1}L$. For all $f\in H^\infty_*(\Sigma_\theta)$ we have
\begin{align*}
\|f(\nu+A_T)\|\leq \|P\|\|f(\nu+\mathbb{A+B})\|\|L\|\leq C\|f\|_\infty.
\end{align*}
Therefore, $\nu+A_T$ allows a bounded $H^\infty(\Sigma_\theta)$-calculus.
\end{proof}
\subsection*{Manifolds}
Now, let $(X,g)$ be a manifold with boundary and bounded geometry as in \cite{Grosse2013}, see also \cite{Schick2001}. We choose an atlas of Fermi coordinates $\kappa_l:U_l\subset X\rightarrow V_l\subset \overline{\mathbb{R}}^n_+$ with index set $\Gamma$ such that $\sup_{l\in\Gamma}|\{k\in\Gamma:U_k\cap U_l\neq \emptyset\}|=:N<\infty$. We also choose a subordinate partition of unity $(\psi_l)_{l\in\Gamma}$ such that $\partial_\nu\psi_l=0$ for all $l\in\Gamma$. Here, $\nu$ denotes an outward unit normal vector field on $\partial X$. For each $\psi_l$, we choose positive functions $\chi_l',\chi_l\in C^\infty_c(U_l)$ such that $\chi_l=1$ on $\supp \psi_l$ and $\chi_l'=1$ on $\supp\chi_l$. We denote $\chi_{l,*}=\kappa_{l,*}\chi_l\in C_c^\infty(V_l)\subset C_c^\infty(\overline{\mathbb{R}}_+^n)$. Similarly, we define $\chi'_{l,*}$. Moreover, we write $\tilde{\kappa}_l(x'):=\kappa_l(x',0)$ for the induced chart on the boundary. Let $\mathcal{A}$ be a sufficiently regular $M$-elliptic second order differential operator on $X$ as in \eqref{eq:A} and $T$ be a boundary operator as in \eqref{eq:T}. For each $l\in\Gamma$, we define the following operators:  
\begin{align*}
\mathcal{A}_l:=-\Delta (1-\chi'_{l,*})+\kappa_{l,*}\mathcal{A}\kappa_l^*\chi'_{l,*}&&\text{and}&& T_l:=\gamma_0 (1-\chi'_{l,*})+\tilde{\kappa}_{l,*}T\kappa_l^*\chi'_{l,*}.
\end{align*}
Then $\mathcal{A}_l$ is an $M$-elliptic second order differential operator on euclidean space with sufficiently regular coefficients. Moreover, the norms of the coefficients of the local representations of $\mathcal{A}$ are bounded by $M$. Therefore, the norms of the coefficients of $\mathcal{A}_l$ are uniformly bounded with respect to $l\in\Gamma$ and so are the seminorms $|t_l|_*$. We define: 
\begin{align*}
A_l:\mathcal{D}(A_l):=\{u\in H^2_p(\mathbb{R}^n_+):T_lu=0\}\rightarrow L_p(\mathbb{R}^n_+),\;u\mapsto r^+\mathcal{A}_le^+u.
\end{align*}
Each operator $A_l$ satisfies the assumptions in the last subsection. Therefore, we can apply Theorem \ref{thm:main_result} to $A_l$, which implies that Lemma \ref{lem:technical} continues to hold.
We define the localization operator and the patching operator by:
\begin{align*}
L&:L_p(X)\rightarrow \mathbb{L}_p(\mathbb{R}^n_+),\;\; u\mapsto (\kappa_{l,*}\psi_lu)_{l\in\mathbb{N}}.\\
P&:\mathbb{L}_p(\mathbb{R}^n_+)\rightarrow L_p(X),\;\; (u_l)_{l\in\mathbb{N}}\rightarrow \sum_{l\in\mathcal{I}} \kappa_{l}^*\chi_{l,*} u_l.
\end{align*}
By definition, $u$ belongs to $H^s_p(X)$ if and only if $Lu$ belongs to $\mathbb{H}^s_p(\mathbb{R}^n_+)$. Moreover, the norms of $u$ and $Lu$ coincide. Therefore:
\begin{itemize}
\item $L\in\mathcal{B}(H^s_p(X);\mathbb{H}^s_p(\mathbb{R}^n_+))$,
\item $P\in\mathcal{B}(\mathbb{H}^s_p(\mathbb{R}^n_+);H^s_p(X))$, and
\item $PL=1$.
\end{itemize}

\begin{rem} Spaces on manifolds with boundary and bounded geometry:  \begin{itemize}
\item[(a)] It is natural to define $H^s_p(X)$ as $r^+H^s_p(\hat{X})$. Here $r^+$ is the restriction in the sense of distributions, $H^s_p(\hat{X})=(I-\Delta_g)^{-s/2}L_p(\hat{X})$ and $\hat{X}$ is a manifold with bounded geometry which contains $X$. For the existence of $\hat{X}$ we refer to \cite{Ammann2018}. 
The operator $(I-\Delta_g)^{-s/2}$ is well defined for all $s\in\mathbb{R}$, due to the result of Strichartz in \cite{Strichartz1983}. Since the restriction can be treated analogously to the euclidean or compact case, we may only consider $H^s_p(\hat{X})$. 
Let $L$ be defined as above with respect to an atlas of normal coordinates. Then $\|L\cdot\|_{\mathbb{H}_p^s(\mathbb{R}^n)}$ and $\|(I-\Delta_g)^s\cdot\|_{L_p(\hat{X})}$ are equivalent norms; this result is due to H. Triebel, see \cite[Theorem 7.4.5]{Triebel1992}. In \cite{Grosse2013}, it was observed that an atlas of Fermi coordinates also gives rise to an equivalent norm.
\item[(b)] The interpolation results for $H^s_p(\mathbb{R}^n)$ extend to $H^s_p(\hat{X})$. This follows from two facts. First $H^s_p(\hat{X})$ is a retract of $\mathbb{H}^s_p(\mathbb{R}^n)$. Second $\mathbb{H}^s_p(\mathbb{R}^n)$ is the space of p-summable sequences with values in $H^s_p(\mathbb{R}^n)$.
\item[(c)] We may define Besov-spaces via real interpolation or via the localization operator $L$. According to part (b) both definitions coincide. The trace theorem holds on manifolds with boundary and bounded geometry, see \cite{Grosse2013} for the details.
\end{itemize} 
\end{rem}

Furthermore, we define $\mathbb{T}:\mathbb{H}^2_p(\mathbb{R}^n_+)\rightarrow \mathbb{B}^{1-1/p}_p(\mathbb{R}^{n-1})$, $(u_l)_{l\in\mathcal{I}}\mapsto (T_lu_l)_{l\in\mathcal{I}}$. Using the fact $\partial_\nu\psi_l=0$ for all $l\in\Gamma$ we obtain: The localization operator maps the kernel of $T$ to the kernel of $\mathbb{T}$ and the patching operator maps the kernel of $\mathbb{T}$ into the kernel of $T$.
We define $\mathcal{D}(\mathbb{A}):=\mathbb{H}^2_p(\mathbb{R}^n_+)\cap\ker \mathbb{T}$. Note that $(u_l)_{l\in\Gamma}\in \mathcal{D}(\mathbb{A})$ implies that $u_l\in \mathcal{D}(A_l)$ for all $l\in\Gamma$. Therefore, the following definition is reasonable:
\begin{align*}
\mathbb{A}:\mathcal{D}(\mathbb{A}):=\mathbb{H}^2_p(\mathbb{R}^n_+)\cap\ker \mathbb{T}\subset \mathbb{L}_p(\mathbb{R}^n_+)\rightarrow\mathbb{L}_p(\mathbb{R}^n_+),\,(u_l)_{l\in\Gamma}\mapsto (A_lu_l)_{l\in\Gamma}.
\end{align*}
Lemma \ref{lem:diagonal} continues to hold as it only relies on Lemma \ref{lem:technical}.
We define $\mathbb{B},\mathbb{D}:\mathbb{H}^2_p(\mathbb{R}^n_+)\subset\mathbb{L}_p(\mathbb{R}^n_+)\rightarrow \mathbb{L}_p(\mathbb{R}^n_+)$ as infinite matrices with entries:
\begin{align*}
B_{lk}:=\kappa_{l_*}[\psi_l,A_T]\chi'_{k,*}\kappa_{k}^*&&\text{resp.}&& D_{lk}:=\kappa_{l,*}\psi_l\kappa_k^*[A_k,\chi_{k,*}].
\end{align*}
Again, the definition is motivated by the Relations \eqref{eq:relation_B} and \eqref{eq:relation_D}. The operators $\mathbb{A},\mathbb{B},\mathbb{D}, L$ and $P$ have the same properties as those on the euclidean space. Therefore, the proof of Theorem \ref{thm:main_result} carries over.

\section{The Porous Medium Equation}\label{sec:porus_medium_equation}

In this section, we illustrate the applicability of the theory developed so far to nonlinear parabolic partial differential equations. A prominent example for this type of equations is the porous medium equation \eqref{eq:PME_1}.   
It arises for instance in the description of the gas flow through a porous medium. 
As pointed out, we consider the case where the initial value $v_0\in H^2_p(X)$ satisfies $v_0\geq c$ for some $c>0$ and the boundary value is independent of time and compatible with the initial value, i.e., $\phi=Tv_0$.
Under this assumption, Theorem \ref{thm_PME} provides the short time existence of a solution.

The proof, below, is inspired by \cite{Schrohe2016}.
We define $u:=v-v_0$ and consider the following equivalent parabolic problem:
\begin{align}\label{eq:PME_2}
\begin{cases}
\dot{u}-\Delta_g (u+v_0)^m=0\\
Tu=0\\
u\vert_{t=0}=0.
\end{cases}
\end{align}
A quick computation shows that $v$ solves \eqref{eq:PME_1} if and only if $u$ solves \eqref{eq:PME_2}. Therefore, we focus on Problem \eqref{eq:PME_2} which we rewrite as an abstract parabolic problem. To this end, we need the following identity which can easily be verified in local coordinates:
\begin{align*}
\Delta_g(u+v_0)^m=& m(u+v_0)^{m-1}\Delta_gu\\&+m(m-1)(u+v_0)^{m-2}|\nabla (u+v_0)|^2_{g}+m((u+v_0))^{m-1}\Delta_gv_0.
\end{align*}
The first term on the right hand side is the highest order term. Therefore, we define $A(u):=-m(u+v_0)^{m-1}\Delta_{g,T}$ and:
\begin{align*}
f(u):=m(m-1)(u+v_0)^{m-2}|\nabla (u+v_0)|^2_{g}+m((u+v_0))^{m-1}\Delta_{g,T}v_0.
\end{align*}
According to the definitions above, Problem \eqref{eq:PME_2} is the abstract parabolic problem:
\begin{align}\label{eq:abstract_parabolic_problem}
\dot{u}+A_T(u)u=f(u);\;\;u\vert_{t=0}=0.
\end{align}
In the following, we verify that the theorem, below, can be applied to \eqref{eq:abstract_parabolic_problem}. 

\begin{thm}[Cl\'ement \& Li, \cite{Clement1994}]\label{thm:Clement/Li} Given an equation in $L_q(0,T;E_0)$:
\begin{align}\label{eq:AQLPP}
\dot{u}(t)+A(u(t))u(t)=f(t,u(t))\;\;\text{and}\;\; u(t_0)=u_0,
\end{align}
for some $1<q<\infty$, for some finite $T$, and $\mathcal{D}(A(u(t)))=E_1$. 
We assume that $A(u_0)$ has maximal regularity and a neighborhood $U$ of $u_0$ exists in $E_{q}=[E_1,E_0]_{1/q,q}$ such that for all $u,u'\in U$:
\begin{itemize}
\item[(CL1)]\label{eq:cl1} $\|A(u)-A(u')\|_{\mathcal{B}(E_1;E_0)}\leq C\|u-u'\|_{E_q}$.
\item[(CL2)]\label{eq:cl2} $\|f(t,u)-f(t',u')\|_{E_0}\leq C(\|u-u'\|_{E_q}+|t-t'|)$.
\end{itemize}
Then, a $\tau>0$ exists such that the Equation \eqref{eq:AQLPP} has a unique solution in: \begin{align*}
L_q(0,\tau;E_1)\cap H^1_q(0,\tau;E_0).
\end{align*}
\end{thm}
To verify the assumptions, we define $E_0=L_p(X)$ and $E_1=H^2_p(X)\cap\ker T$. The trace space is defined as:
\begin{align}\label{eq:trace_space}
E_q:=[E_1,E_0]_{1/q,q}\hookrightarrow[H^2_p(X),L_p(X)]_{_{1/q,q}}=B^{2-2/q}_{p,q}(X)\hookrightarrow C^\tau(X).
\end{align}
Here, the last embedding holds since $2-2/q-n/p>\tau>0$ by assumption. The operator $A(u_0)=-mv_0^{m-1}\Delta_{g,T}$ satisfies the assumptions of Theorem \ref{thm:main_result} as $v_0$ is strictly positive. Therefore, a suitable shift of $A(u_0)$ allows a bounded $H^\infty$-calculus and thus $A(u_0)$ has maximal  $L_q$-regularity. Maximal regularity is part of the assumptions of Theorem \ref{thm:Clement/Li}. Next, we consider the remaining assumptions of the theorem. To this end, we need the following result:
\begin{lem}\label{lem:hol_est} Let $v_0\in C^\tau(X)$ with $\re v_0\geq \delta>0$. We define:
\begin{align*}
W:=\{z\in\mathbb{C}:|z|< \|v_0\|_{C^\tau}+3\delta/4,\;\;\re z>\delta/4\}\;\;.
\end{align*}
A neighborhood $V$ of $v_0$ in $C^\tau(X)$ and a constant $C:=C(\delta,\|v_0\|_{C^\tau(X)})$ exist such that for all $f\in H^\infty(W)$ and $u,u'\in V$ the following estimates hold:
\begin{align*}
\|f(u)\|_{C^\tau(X)}&\leq C\|f\|_{L_\infty(W)}\;\;\text{and}\\
|f(u)-f(u')\|_{C^\tau(X)}
&\leq C\|f\|_{L_\infty(W)}\|u-u'\|_{C^\tau(X)}.
\end{align*}
\end{lem}

\begin{proof}
We choose $V:=B(v_0,\delta/4)$. Since all functions in $V$ are continuous, we obtain:
\begin{align*}
\image V:=\cup_{u\in V}\image u\subset W'':=\{z\in\mathbb{C}:|z|< \|v_0\|_{C^\tau}+\delta/4,\;\;\re z>\delta(1-1/4)\}.
\end{align*}
Furthermore, we define $W':=\{z\in\mathbb{C}:|z|< \|v_0\|_{C^\tau}+\delta/2,\;\;\re z>\delta(1-1/2)\}$. By definition, some distance between the boundary of $W''$ and the boundary of $W'$ exists, i.e., $d(\partial W'',\partial W')\geq\delta/4$.
Therefore, $|\eta-u(x)|\geq \delta/4$ for all $u\in V$, $\eta\in\partial W'$ and $x\in X$. It is well-known that such a lower bound implies that $(\eta-u)^{-1}\in C^\tau(X)$. Moreover, the following estimate holds:
\begin{align*}
\|(\eta-u)^{-1}\|_{C^\tau(X)}\leq 16/\delta^2\|\eta-u\|_{C^\tau(X)}\leq 16/\delta^2 (2\|v_0\|_{C^\tau(X)}+3\delta/4)=:S.
\end{align*}
We can estimate the length of the boundary:
$|\partial W'|\leq 2\pi (\|v_0\|_{C^\tau(X)}+\delta/2):=2\pi L$. For all $u\in V$ and $x\in X$, we obtain the following identity from the Cauchy integral representation:
\begin{align*}
f(u(x))=\frac{1}{2\pi i}\int_{\partial W'}f(\eta) (\eta-u(x))^{-1}\,d\eta.
\end{align*}
Thus, we obtain the first estimate $\|f(u)\|_{C^\tau(X)}\leq LS\|f\|_{H^\infty(W)}$.
For $u,u'\in V$, we use the resolvent identity to obtain:
\begin{align*}
f(u(x))-f(u'(x))=\frac{u'(x)-u(x)}{2\pi i}\int_{\partial W'} f(\eta)(\eta-u(x))^{-1}(\eta-u'(x))^{-1}\,d\eta
\end{align*}
We can estimate the $C^\tau(X)$-norm as before. Therefore, the $C^\tau(X)$-norm of the left hand side can be estimated as stated in the lemma.
\end{proof}
According to the assumptions of Theorem \ref{thm_PME} and Embedding \eqref{eq:trace_space}, the function $v_0$ satisfies the assumptions of Lemma \ref{lem:hol_est}. We choose a neighborhood $V$ of $v_0$, according to Lemma \ref{lem:hol_est}. Additionally, we choose a neighborhood $U$ of zero in $E_q$ such that the image of $U+v_0$ under the Embedding \eqref{eq:trace_space} belongs to $V$. 
For $i\in\{1,2\}$, Lemma \ref{lem:hol_est} applies to $f(z):=z^{m-i}$. Therefore:
\begin{align}
\label{eq:est_multi_1}
\|(u+v_0)^{m-i}\|_{C^\tau(X)}&\leq C\;\;\text{for all}\;\; u\in U\;\;\text{and}\\
\label{eq:est_multi_2}
\|(u+v_0)^{m-i}-(u'+v_0)^{m-i}\|_{C^\tau(X)}&\leq C\|u-u'\|_{E_q}\;\;\text{for all}\;\; u,u'\in U.
\end{align}
We recall that $C^\tau(X)\hookrightarrow\mathcal{B}(E_0)$ as multiplication operators.
Thus, Estimate \eqref{eq:est_multi_2} implies
\begin{align*}
\|A(u)-A(u')\|_{\mathcal{B}(E_1;E_0)}&\leq m\|(u+v_0)^{m-1}-(u'+v_0)^{m-1}\|_{\mathcal{B}(E_0)}\|\Delta_{g,T}\|_{\mathcal{B}(E_1;E_0)}\\
&\leq C\|u-u'\|_{E_q}
\end{align*}
for all $u,u'\in U$. Therefore, Assumption (CL1) 
in Theorem \ref{thm:Clement/Li} is satisfied.
Next, we verify Assumption (CL2). 
To this end, we define\\ $h(u)=(u+v_0)^{m-2}|\nabla (u+v_0)|^2_{g}$ and observe:
\begin{align*}
h(u)-h(u')=&(u+v_0)^{m-2}|\nabla(u+v_0)|^2_g-(u'+v_0)^{m-2}|\nabla(u'+v_0)|^2_g
\\=&\left((u+v_0)^{m-2}-(u'+v_0)^{m-2}\right)|\nabla(u+v_0)|_g^2\\&+(u'+v_0)^{m-2}\left(|\nabla(u+v_0)|^2_g-|\nabla(u'+v_0)|^2_g\right)\\
=&\left((u+v_0)^{m-2}-(u'+v_0)^{m-2}\right)|\nabla(u+v_0)|_g^2\\
&+(u'+v_0)^{m-2}\langle\nabla(u-u'),\nabla(u+v_0)\rangle_g\\
&+(u'+v_0)^{m-2}\langle\nabla(u'+v_0),\nabla(u-u')\rangle_g.
\end{align*}
The assumption $1>n/p+2/q$ and the Embedding \eqref{eq:trace_space} imply that $E_q\hookrightarrow C^1(X)$ and $E_q\hookrightarrow H^1_p(X)$. Thus, for all $u,u'\in E_q$, the following estimate holds:
\begin{align*}
\|\langle \nabla_g u,\nabla_g u'\rangle_g\|_{E_0}=\|\langle \nabla_g u,\nabla_g u'\rangle_g\|_{L_p(X)}\leq \|u\|_{C^1(X)}\| u'\|_{H^1_p(X)}\leq\|u\|_{E_q}\|u'\|_{E_q}.
\end{align*}
Therefore, for $u,u'\in U$, we obtain:
\begin{align*}
\||\nabla(u+v_0)|^2\|_{E_0}&\leq C,\\ 
\|\langle\nabla(u-u'),\nabla(u-v_0)\rangle_g\|_{E_0}&\leq C\|u-u'\|_{E_q},\;\;\text{and}\\ \|\langle\nabla(u'-v_0),\nabla(u-u')\rangle_g\|_{E_0}&\leq C\|u-u'\|_{E_q}.
\end{align*}
The Estimates \eqref{eq:est_multi_1}, \eqref{eq:est_multi_2}, and those above imply $\|h(u)-h(u')\|_{E_0}\leq C\|u-u'\|_{E_q}$. We obtain $\|((u-v_0)^{m-1}-(u-v_0)^{m-1})\Delta_gv_0\|_{E_0}\leq C\|u-u'\|_{E_q}$ for all $u,u'\in U$ from the assumption $v_0\in H^2_p(X)$ and Estimate \eqref{eq:est_multi_2}. Thus, $\|f(u)-f(u')\|_{E_0}\leq C\|u-u'\|_{E_q}$ for all $u,u'\in U$. In other words, Assumption (CL2) 
is satisfied. Therefore, Theorem \ref{thm:Clement/Li} can be applied to Problem \eqref{eq:abstract_parabolic_problem} which completes the proof of Theorem \ref{thm_PME}.

\section*{References}
\begin{biblist}

\bib{Abels2005}{article}{ 
   author={Abels, Helmut},
   title={Pseudodifferential boundary value problems with non-smooth
   coefficients},
   journal={Comm. Partial Differential Equations},
   volume={30},
   date={2005},
   number={10-12},
   pages={1463--1503},
   issn={0360-5302},
   review={\MR{2182301}},
}
\bib{Agmon1962}{article}{ 
   author={Agmon, Shmuel},
   title={On the eigenfunctions and on the eigenvalues of general elliptic
   boundary value problems},
   journal={Comm. Pure Appl. Math.},
   volume={15},
   date={1962},
   pages={119--147},
   issn={0010-3640},
   review={\MR{0147774}},
}
\bib{Amann1995}{book}{
   author={Amann, Herbert},
   title={Linear and quasilinear parabolic problems. Vol. I},
   series={Monographs in Mathematics},
   volume={89},
   note={Abstract linear theory},
   publisher={Birkh\"{a}user Boston, Inc., Boston, MA},
   date={1995},
   review={\MR{1345385}},
}
\bib{Ammann2019}{book}{ 
   author={Amann, Herbert},
   title={Linear and quasilinear parabolic problems. Vol. II},
   series={Monographs in Mathematics},
   volume={106},
   note={Function spaces},
   publisher={Birkh\"{a}user/Springer, Cham},
   date={2019},
   review={\MR{3930629}},
}
\bib{Amann1994}{article}{ 
   author={Amann, Herbert},
   author={Hieber, Matthias},
   author={Simonett, Gieri},
   title={Bounded $H_\infty$-calculus for elliptic operators},
   journal={Differential Integral Equations},
   volume={7},
   date={1994},
   number={3-4},
   pages={613--653},
   review={\MR{1270095}},
}
\bib{Ammann2018}{article}{
author = {Ammann, Bernd},
author = {Große, Nadine},
author = {Nistor, Victor},
title = {Well-posedness of the Laplacian on manifolds with boundary and bounded geometry},
journal = {Mathematische Nachrichten},
volume = {292},
number = {6},
pages = {1213-1237},
year = {2019}
}

\bib{Bergh1976}{book}{ 
   author={Bergh, J\"{o}ran},
   author={L\"{o}fstr\"{o}m, J\"{o}rgen},
   title={Interpolation spaces. An introduction},
   note={Grundlehren der Mathematischen Wissenschaften, No. 223},
   publisher={Springer-Verlag, Berlin-New York},
   date={1976},
   review={\MR{0482275}},
}
\bib{BoutetMonvel1971}{article}{ 
   author={Boutet de Monvel, Louis},
   title={Boundary problems for pseudo-differential operators},
   journal={Acta Math.},
   volume={126},
   date={1971},
   number={1-2},
   pages={11--51},
   review={\MR{0407904}},
}
\bib{Bilyj10}{article}{
   author={Bilyj, Olesya},
   author={Schrohe, Elmar},
   author={Seiler, J\"{o}rg},
   title={$H_\infty$-calculus for hypoelliptic pseudodifferential operators},
   journal={Proc. Amer. Math. Soc.},
   volume={138},
   date={2010},
   number={5},
   pages={1645--1656},
   review={\MR{2587449}},
}
\bib{Clement1994}{article}{ 
   author={Cl\'{e}ment, Philippe},
   author={Li, Shuanhu},
   title={Abstract parabolic quasilinear equations and application to a
   groundwater flow problem},
   journal={Adv. Math. Sci. Appl.},
   volume={3},
   date={1993/94},
   number={Special Issue},
   pages={17--32},
   review={\MR{1287921}},
}

\bib{CSS2007}{article}{
   author={Coriasco, S.},
   author={Schrohe, E.},
   author={Seiler, J.},
   title={Bounded $H_\infty$-calculus for differential operators on conic
   manifolds with boundary},
   journal={Comm. Partial Differential Equations},
   volume={32},
   date={2007},
   number={1-3},
   pages={229--255},
   review={\MR{2304149}},
}
\bib{Denk2004}{article}{ 
   author={Denk, Robert},
   author={Dore, Giovanni},
   author={Hieber, Matthias},
   author={Pr\"{u}ss, Jan},
   author={Venni, Alberto},
   title={New thoughts on old results of R. T. Seeley},
   journal={Math. Ann.},
   volume={328},
   date={2004},
   number={4},
   pages={545--583},
   review={\MR{2047641}},
}
\bib{Denk2001}{article}{ 
   author={Denk, Robert},
   author={Hieber, Matthias},
   author={Pr\"{u}ss, Jan},
   title={$\scr R$-boundedness, Fourier multipliers and problems of elliptic
   and parabolic type},
   journal={Mem. Amer. Math. Soc.},
   volume={166},
   date={2003},
   number={788},
   review={\MR{2006641}},
}
\bib{Dore1987}{article}{ 
   author={Dore, Giovanni},
   author={Venni, Alberto},
   title={On the closedness of the sum of two closed operators},
   journal={Math. Z.},
   volume={196},
   date={1987},
   number={2},
   pages={189--201},
   review={\MR{910825}},
}

\bib{EgorovKondratev69}{article}{
   author={Egorov, Ju. V.},
   author={Kondrat\cprime ev, V. A.},
   title={The oblique derivative problem},
   language={Russian},
   journal={Mat. sb. (N.S.)},
   volume={78 (120)},
   date={1969},
   pages={148--176},
   review={\MR{0237953}},
}

\bib{Grosse2013}{article}{ 
   author={Gro\ss e, Nadine},
   author={Schneider, Cornelia},
   title={Sobolev spaces on Riemannian manifolds with bounded geometry:
   general coordinates and traces},
   journal={Math. Nachr.},
   volume={286},
   date={2013},
   number={16},
   pages={1586--1613},
   review={\MR{3126616}},
}
\bib{Grubb1990}{article}{ 
   author={Grubb, Gerd},
   title={Pseudo-differential boundary problems in $L_p$ spaces},
   journal={Comm. Partial Differential Equations},
   volume={15},
   date={1990},
   number={3},
   pages={289--340},
   review={\MR{1044427}},
}
\bib{Grubb1996}{book}{ 
   author={Grubb, Gerd},
   title={Functional calculus of pseudodifferential boundary problems},
   series={Progress in Mathematics},
   volume={65},
   edition={2},
   publisher={Birkh\"{a}user Boston, Inc., Boston, MA},
   date={1996},
   pages={x+522},
   review={\MR{1385196}},
}
\bib{Grubb1990a}{article}{ 
   author={Grubb, Gerd},
   author={H\"{o}rmander, Lars},
   title={The transmission property},
   journal={Math. Scand.},
   volume={67},
   date={1990},
   number={2},
   pages={273--289},
   review={\MR{1096462}},
}

\bib{GrubbSchrohe2001}{article}{
   author={Grubb, Gerd},
   author={Schrohe, Elmar},
   title={Trace expansions and the noncommutative residue for manifolds with
   boundary},
   journal={J. Reine Angew. Math.},
   volume={536},
   date={2001},
   pages={167--207},
   review={\MR{1837429}},
}

\bib{Hoermander67}{article}{
   author={H\"{o}rmander, Lars},
   title={Pseudo-differential operators and hypoelliptic equations},
   conference={
      title={Singular integrals (Proc. Sympos. Pure Math., Vol. X, Chicago,
      Ill., 1966)},
   },
   book={
      publisher={Amer. Math. Soc., Providence, R.I.},
   },
   date={1967},
   pages={138--183},
   review={\MR{0383152}},
}
\bib{Kannai76}{article}{
   author={Kannai, Yakar},
   title={Hypoellipticity of certain degenerate elliptic boundary value
   problems},
   journal={Trans. Amer. Math. Soc.},
   volume={217},
   date={1976},
   pages={311--328},
   review={\MR{407436}},
}
\bib{Krietenstein2019}{thesis}{
	author={Krietenstien, Thorben},
	title={Bounded $H^\infty$-calculus for a degenerate elliptic boundary value problem},
	organization={Leibniz Universität Hannover},
	date={2019},
	type={Dissertation},
	eprint={https://doi.org/10.15488/8829},	
}

\bib{Kunstmann2004}{article}{ 
   author={Kunstmann, Peer C.},
   author={Weis, Lutz},
   title={Maximal $L_p$-regularity for parabolic equations, Fourier
   multiplier theorems and $H^\infty$-functional calculus},
   conference={
      title={Functional analytic methods for evolution equations},
   },
   book={
      series={Lecture Notes in Math.},
      volume={1855},
      publisher={Springer, Berlin},
   },
   date={2004},
   pages={65--311},
   review={\MR{2108959}},
}

\bib{Kumano-Go.1981}{book}{ 
   author={Kumano-go, Hitoshi},
   title={Pseudodifferential operators},
   note={Translated from the Japanese by the author, R\'{e}mi Vaillancourt and
   Michihiro Nagase},
   publisher={MIT Press, Cambridge, Mass.-London},
   date={1981},
   review={\MR{666870}},
}

\bib{McIntosh1986}{article}{ 
   author={McIntosh, Alan},
   title={Operators which have an $H_\infty$ functional calculus},
   conference={
      title={Miniconference on operator theory and partial differential
      equations},
      address={North Ryde},
      date={1986},
   },
   book={
      series={Proc. Centre Math. Anal. Austral. Nat. Univ.},
      volume={14},
      publisher={Austral. Nat. Univ., Canberra},
   },
   date={1986},
   pages={210--231},
   review={\MR{912940}},
}
\bib{Rempel1982}{book}{ 
   author={Rempel, Stephan},
   author={Schulze, Bert-Wolfgang},
   title={Index theory of elliptic boundary problems},
   note={Reprint of the 1982 edition},
   publisher={North Oxford Academic Publishing Co. Ltd., London},
   date={1985},
   pages={393},
   review={\MR{829709}},
}
\bib{Schrohe2016}{article}{
   author={Roidos, Nikolaos},
   author={Schrohe, Elmar},
   title={Existence and maximal $L^p$-regularity of solutions for the porous
   medium equation on manifolds with conical singularities},
   journal={Comm. Partial Differential Equations},
   volume={41},
   date={2016},
   number={9},
   pages={1441--1471},
   review={\MR{3551464}},
}

\bib{Schick2001}{article}{ 
   author={Schick, Thomas},
   title={Manifolds with boundary and of bounded geometry},
   journal={Math. Nachr.},
   volume={223},
   date={2001},
   pages={103--120},
   review={\MR{1817852}},
}

\bib{Schrohe2001}{article}{ 
   author={Schrohe, Elmar},
   title={A short introduction to Boutet de Monvel's calculus},
   conference={
      title={Approaches to singular analysis},
      address={Berlin},
      date={1999},
   },
   book={
      series={Oper. Theory Adv. Appl.},
      volume={125},
      publisher={Birkh\"{a}user, Basel},
   },
   date={2001},
   pages={85--116},
   review={\MR{1827171}},
}
\bib{Schulze1991}{book}{
   author={Schulze, B.-W.},
   title={Pseudo-differential operators on manifolds with singularities},
   series={Studies in Mathematics and its Applications},
   volume={24},
   publisher={North-Holland Publishing Co., Amsterdam},
   date={1991},
   pages={vi+410},
   review={\MR{1142574}},
}

\bib{Seeley1971}{article}{ 
   author={Seeley, Robert},
   title={Norms and domains of the complex powers $A_{B}z$},
   journal={Amer. J. Math.},
   volume={93},
   date={1971},
   pages={299--309},
   issn={0002-9327},
   review={\MR{287376}},
}

\bib{Strichartz1983}{article}{ 
   author={Strichartz, Robert S.},
   title={Analysis of the Laplacian on the complete Riemannian manifold},
   journal={J. Functional Analysis},
   volume={52},
   date={1983},
   number={1},
   pages={48--79},
   review={\MR{705991}},
}

\bib{Taira2020}{book}{ 
   author={Taira, Kazuaki},
   title={Boundary value problems and Markov processes},
   series={Lecture Notes in Mathematics},
   volume={1499},
   edition={3},
   publisher={Springer Nature Switzerland, Cham},
   date={2020},
}

\bib{Taira76}{article}{
   author={Taira, Kazuaki},
   title={On some degenerate oblique derivative problems},
   journal={J. Fac. Sci. Univ. Tokyo Sect. IA Math.},
   volume={23},
   date={1976},
   number={2},
   pages={259--287},
   review={\MR{435583}},
}

\bib{TairaCUP}{book}{
   author={Taira, Kazuaki},
   title={Analytic semigroups and semilinear initial boundary value
   problems},
   series={London Mathematical Society Lecture Note Series},
   volume={434},
   edition={2},
   publisher={Cambridge University Press, Cambridge},
   date={2016},
   review={\MR{3444791}},
}

\bib{Taira2014}{book}{
   author={Taira, Kazuaki},
   title={Semigroups, boundary value problems and Markov processes},
   series={Springer Monographs in Mathematics},
   edition={2},
   publisher={Springer, Heidelberg},
   date={2014},
   review={\MR{3308364}},
}
\bib{Taira16}{article}{
   author={Taira, Kazuaki},
   title={Bifurcation curves in a combustion problem with general Arrhenius
   reaction-rate laws},
   journal={Ann. Univ. Ferrara Sez. VII Sci. Mat.},
   volume={62},
   date={2016},
   number={2},
   pages={337--371},
   review={\MR{3570361}},
}

\bib{Triebel1978}{book}{ 
   author={Triebel, Hans},
   title={Interpolation theory, function spaces, differential operators},
   series={North-Holland Mathematical Library},
   volume={18},
   publisher={North-Holland Publishing Co., Amsterdam-New York},
   date={1978},
   pages={528},
   review={\MR{503903}},
}

\bib{Triebel1992}{book}{ 
   author={Triebel, Hans},
   title={Theory of function spaces. II},
   series={Monographs in Mathematics},
   volume={84},
   publisher={Birkh\"{a}user Verlag, Basel},
   date={1992},
   pages={viii+370},
   review={\MR{1163193}},
}
										
\end{biblist}

\end{document}